\newcommand{\R}{{\mathbb{R}}}         
\newcommand{\E}{{\mathbb{E}}}
 \newcommand{\N}{{\mathbb{N}}}         
\newcommand{\renu}{\mathbb{R}}
\newcommand{\bay}{\begin{array}}
\newcommand{\eay}{\end{array}}
\newcommand{\bqa}{\begin{eqnarray*}}
\newcommand{\eqa}{\end{eqnarray*}}
\newcommand{\bee}{\begin{eqnarray*}}
\newcommand{\eee}{\end{eqnarray*}}
\newcommand{\bea}{\begin{eqnarray*}}
\newcommand{\eea}{\end{eqnarray*}}
\newcommand{\bqan}{\begin{eqnarray}}
\newcommand{\eqan}{\end{eqnarray}}
\newcommand{\be}{\begin{eqnarray}}
\newcommand{\ee}{\end{eqnarray}}
\newcommand{\bit}{\begin{itemize}}
\newcommand{\eit}{\end{itemize}}
\newcommand{\ben}{\begin{enumerate}}
\newcommand{\een}{\end{enumerate}}
\newcommand{\beq}{\begin{equation}}
\newcommand{\eeq}{\end{equation}}
\newcommand{\bdes}{\begin{description}}
\newcommand{\edes}{\end{description}}
\newcommand{\btb}{\begin{tabular}}
\newcommand{\etb}{\end{tabular}}
\newcommand{\bcen}{\begin{center}}
\newcommand{\ecen}{\end{center}}
\newcommand{\bmp}{\begin{minipage}}
\newcommand{\emp}{\end{minipage}}
\newcommand{\cov}{\operatorname{{\it Cov}}}
\newcommand{\Cov}{\operatorname{{\it Cov}}}
\newcommand{\Var}{\operatorname{{\it Var}}}
\newcommand{\tr}{\operatorname{tr}}
\newcommand{\diag}{\operatorname{\it diag}}
\newcommand{\va}{\boldsymbol{a}}
\newcommand{\vb}{\boldsymbol{b}}
\newcommand{\ve}{\boldsymbol{e}}
\newcommand{\vx}{\boldsymbol{x}}
\newcommand{\vy}{\boldsymbol{y}}
\newcommand{\vA}{\boldsymbol{A}}
\newcommand{\vD}{\boldsymbol{D}}
\newcommand{\vE}{\boldsymbol{E}}
\newcommand{\vH}{\boldsymbol{H}}
\newcommand{\vI}{\boldsymbol{I}}
\newcommand{\vJ}{\boldsymbol{J}}
\newcommand{\vM}{\boldsymbol{M}}
\newcommand{\vP}{\boldsymbol{P}}
\newcommand{\vT}{\boldsymbol{T}}
\newcommand{\vV}{\boldsymbol{V}}
\newcommand{\vW}{\boldsymbol{W}}
\newcommand{\vX}{\boldsymbol{X}}
\newcommand{\vY}{\boldsymbol{Y}}
\newcommand{\vZ}{\boldsymbol{Z}}
\newcommand{\vmu}{\boldsymbol{\mu}}
\newcommand{\vpi}{\boldsymbol{\pi}}
\newcommand{\vsigma}{\boldsymbol{\sigma}}
\newcommand{\vSigma}{\boldsymbol{\Sigma}}
\newcommand{\vnull}{{\bf 0}}
\newcommand{\PP}{{\mathbb{P}}}         
\newcommand{\F}{\mathcal {F}}
\newcommand{\Crp}[1]{\hspace{-0,0cm}{\Cref{#1}\hspace{0,1cm}(p.\pageref*{#1})}}
\newcommand{\Cdot}{\cdot}
\newcommand{\sem}{\large{\textbf{;}}}
\renewcommand*\arraystretch{1.2}
\newcommand{\0}{\mathcal{O}}
\newtheoremstyle{Test1}
  {2 \baselineskip}
  {1.5 \baselineskip}
  {\itshape}
  {-0.0ex}
  {\fontfamily{ppl}\fontseries{l}\fontshape{n}}
  {:}
  {\newline}
   {}
\theoremstyle{Test1}
\newtheorem{Sa}{Theorem}[section]
\newtheorem{theorem}{Theorem}[section]
\newtheorem{re}[Sa]{Remark}
\newtheorem{Def}[Sa]{Definition} 
\newtheorem{Le}[Sa]{Lemma}
\newtheorem{SaA}{Theorem}[section]
\newtheorem{LeA}[SaA]{Lemma}
\newtheorem{KoA}[SaA]{Korollar}
\renewenvironment{proof}[1][\proofname]{\par
  \pushQED{\qed}%
  \fontfamily{ppl}\fontseries{m}\fontshape{it} \topsep6\p@\@plus6\p@\relax
  \trivlist
  \item[\hskip\labelsep
        \bfseries
    #1\@addpunct{:}]\ignorespaces
}{%
  \popQED\endtrivlist\@endpefalse
}
\begin{document}
%
%
\title{Inference For High-Dimensional Split-Plot-Designs:\\ A Unified Approach for Small to Large Numbers of Factor Levels}

{\begin{center}\Huge{Inference For High-Dimensional Split-Plot-Designs:\\ A Unified Approach for Small to Large Numbers of Factor Levels}\\\vspace{1\baselineskip}
\begin{large} Paavo Sattler$^1$ and Markus Pauly$^1$ \end{large}\\
{\tiny ${}^1$University of Ulm, Institute of Statistics}
 \end{center}
}
\vspace*{1\baselineskip}
{\bf Abstract:} 
\normalsize
Statisticians increasingly face the problem to reconsider the adaptability of classical
inference techniques. In particular, divers types of high-dimensional data structures are observed in various
research areas; disclosing the boundaries of conventional multivariate data analysis. 
Such situations occur, e.g., frequently in life
sciences whenever it is easier or cheaper to repeatedly generate a large number $d$ of 
observations per subject than recruiting many, say $N$, subjects. 
In this paper we discuss inference procedures for such situations in general heteroscedastic split-plot designs with $a$ independent groups of repeated measurements. 
These will, e.g., be able to answer questions about the occurrence of certain time, group and interactions effects or about particular
profiles.\\
The test procedures are based on standardized quadratic forms involving suitably symmetrized U-statistics-type estimators which are robust against an increasing number of dimensions $d$ and/or groups $a$.
We then discuss its limit distributions in a general asymptotic framework and additionally propose 
improved small sample approximations. Finally its small sample performance is investigated in simulations and the applicability is illustrated by a real data analysis.\\\\\\

\textbf{Keywords: }{Approximations}, {High-dimensional Data}, {Quadratic Forms}, {Repeated Measures}, {Split-plot designs}

\newpage

\section{Introduction}
In our current century of data, statisticians increasingly face the problem to reconsider the adaptability of classical inferential techniques. 
In particular, divers types of high-dimensional data structures are observed in various research areas; disclosing the boundaries of 
conventional multivariate data analysis. 
Here, the {\it curse of high dimensionality} or the {\it large $d$ small $N$ problem} is especially encountered in life sciences 
whenever it is easier (or cheaper) to repeatedly generate a large number $d$ of observations per subject than recruiting many, say $N$, subjects. 
Similar observations can be made in industrial sciences with subjects replaced by units. Such designs, where experimental units 
are repeatedly observed under different conditions or at different time points, are called {\it repeated measures designs} or (if two or more groups are observed) 
{\it split-plot designs}. In these trials, one likes to answer questions about the occurrence of certain group or time effects or about particular profiles. 
Conventionally, for $d < N$, corresponding null hypotheses are inferred with Hotelling’s $T^2$ (one or two sample case) or Wilks’s $\Lambda$, see e.g. 
\cite{Davis}[Section 4.3] or \cite{johnson} [Section 6.8]. 
Besides normality, these procedures heavily rely on the assumption of equal covariance matrices and particularly break down in high-dimensional settings with $N<d$. 
While there exist several promising approaches to adequately deal with the problem of covariance heterogeneity in the classical case with $d<N$ 
(see e.g. \cite{box, GG1958, GG1959, Huynh1976, Lecoutre1991, VallejoAto, Ahmad_etal_2008, Kenward, Brunner2012, Pesarin:2012, bib22, Kon:2015, Happ, Sarah}) 
most procedures for high-dimensional repeated measures designs rely on certain sparsity conditions (see e.g. \cite{Bai, bib8a, Katayama_2013, Nishiyama, Secchi, Cai14, Harrar} and the references cited therein).
In particular, in an asymptotic $(d,N)\to \infty$ framework, typical assumptions restrict the way the sample size $N$ and/or various powers of traces of the underlying covariances increase with respect to $d$. 
These type of sparsity conditions guarantee central limit theorems that lead to approximations of underlying test statistics by a fixed limit distribution. 
However, as illustrated in \cite{Paper1} for one-sample repeated measures these conditions can in general not be regarded as regularity assumptions. 
In particular, they may even fail for classical covariance structures. To this end, the authors proposed a novel approximation technique that showed considerably accurate results and investigated its asymptotic behavior in a flexible and non-restrictive $(d,N)\to \infty$ framework. 
Here, no assumptions regarding the dependence between $d$ and $N$ or the covariance matrix were made. 
In the current paper, we follow this approach and extend the results of \cite{Paper1} to general heteroscedastic split-plot designs with $a$ independent groups of repeated measurements. 
To even allow for a large number of groups as in \cite{Bathke, Bathkeb} or \cite{Zhan}, we do not only consider the case with a fixed number $a\in\N$ of samples but additionally allow for situations with $a\to\infty$. The latter case is of particular interest if most groups are rather small (as in screening trials) such that a classical test would essentially possess no power for fixed $a$. 
Here increasing the number of groups implies increasing the total sample size from which a power increase might be expected as well. 
This leads to one of the following asymptotic frameworks 
\begin{eqnarray*}
a\in\N\; \text{ fixed}  &\text{and}& (d,N)\to \infty, \label{eq: as framework 1}\\
d\in\N\; \text{ fixed}  &\text{and}& (a,N)\to \infty, \label{eq: as framework 2}\\
&\text{or}& (a,d,N)\to \infty. \label{eq: as framework 3}
\end{eqnarray*}
which we handle simultaneously in the sequel. For all considerations, the adequate and dimension-stable estimation of traces of certain powers of combined covariances turned out to be a major problem. 
It is tackled by introducing novel symmetrized estimates of $U$-statistics-type which possess nice asymptotic properties under all asymptotic frameworks given above. 

The paper is organized as follows. 
The statistical model together with the considered hypotheses of interest are introduced in \Cref{ Statistical Model and the Hypotheses}. The test statistic and its asymptotic behavior is investigated in \Cref{The Test Statistic and its Asymptotics}, where also novel dimension-stable trace estimators are introduced. 
Additional approximations for small sample sizes are theoretically discussed in 
\Cref{Better Approximations} 
and their performance is studied in simulations in \Cref{Simulations}. 
Afterwards, the new methods will be applied to analyze a high-dimensional data set from a sleep-laboratory trial in \Cref{Analysis of the Data sets}. 
The paper closes with a discussion and an outlook. All proofs in this paper are shifted to the supplementary material.

\section{Statistical Model and Hypotheses}\label{ Statistical Model and the Hypotheses}
We consider a split-plot design given by $a$ independent groups of $d$-dimensional random vectors 
\bqan\label{eq: model}
{\vX}_{i,j}= ({X}_{i,j,1},\dots,{X}_{i,j,d})^\top \stackrel{ind}{\sim}\mathcal{N}_d\left(\vmu_i,\vSigma_i\right)\hspace{0,2cm}j=1,\dots,n_i,\hspace{0,2cm} i=1,\dots,a
\eqan
with mean vectors $E(\vX_{i,1})=\vmu_i = (\mu_{i,t})_{t=1}^d \in \R^d$ and positive definite covariance 
matrices $Cov (\vX_{i,1})=\vSigma_i$. 
Here $j=1,\dots,n_i$ denotes the individual subjects or units in group $i=1,\dots, a$, $a,n_i\in \N$, where no specific structure of 
the group-specific covariance matrices $\vSigma_i$ is assumed. In particular, they are even allowed to differ completely. Altogether we have a total number of $N=\sum_{i=1}^a n_i$ random vectors representing observations from independent subjects. 
Within this framework, a factorial structure on the factors group or time can be incorporated by splitting up indices. 
Also, a group-specific random subject effect can be incorporated as outlined in \cite{Paper1}[Equation (2.2)].\\

Writing $\vmu = (\vmu_1^\top,\ldots,\vmu_a^\top)^\top$, linear hypotheses of interest
in this general split-plot model are formulated as
\bqan\label{eq:null hypo}
H_0(\vH) :\vH\vmu=\vnull 
\eqan 
for a proper hypothesis matrix $\vH$. It is of the form $\vH=\vH_S\otimes \vH_W$, where $\vH_S$ and $\vH_W$ refer to 
{\bf s}ubplot (time) and/or {\bf w}hole-plot (group) effects. For theoretical considerations it is often more convenient to reformulate $H_0(\vH)$ by means of the corresponding projection matrix
{$\vT =\vH^\top [\vH \vH^\top]^- \vH$}, see e.g. \cite{Paper1}. 
Here $(\cdot)^-$ denotes some generalized inverse of the matrix and $H_0(\vH)$ can equivalently be written as 
$H_0(\vT):\vT\vmu=\vnull$. 
It is a simple exercise to prove that the matrix $\vT$ is of the form $\vT=\vT_S\otimes\vT_W$ for projection matrices $\vT_S$ and $\vT_W$, see  \Crp{ginverse} in the supplement.
Typical examples are given by {
\bit
\item[(a)] No group effect:\\ $H_0^a: \left(\vP_a\otimes \frac{1}{d}\vJ_d\right)\vmu=\vnull$,
\item[(b)] No time effect:\\ $H_0^b: \left(\frac 1 a\vJ_a\otimes \vP_d\right)\vmu=\vnull$,
\item[(c)] No interaction effect between time and group:\\
$H_0^{ab}: \left(\vP_a\otimes \vP_d\right)\vmu=\vnull$,
\eit}
where $\vJ_d$ is the d-dimensional matrix only containing 1s and $\vP_d:=\vI_d-1/d\cdot \vJ_d$ is the centring matrix. 
For interpretational purposes it is sometimes helpful to decompose 
the component-wise means as
$$
\mu_{i,t}= \mu + \alpha_i + \beta_t + (\alpha\beta)_{it},\quad i=1,\dots,a;\ t=1,\dots,d,
$$
where $\alpha_i\in\R$ represents the $i$-th group effect, $\beta_t\in \R$ the time effect at time point $t$ and 
$(\alpha\beta)_{it}\in \R$ the $(i,t)$-interaction effect between group and time with the usual side conditions 
$\sum_i \alpha_i = \sum_t \beta_t = \sum_{i,t} (\alpha\beta)_{it} = 0$. With this notation the above null hypothesis can be 
rewritten as (a) $H_0^a: \alpha_i\equiv 0 \text{ for all } i$, 
(b) $H_0^b: \beta_t\equiv 0 \text{ for all } t$ and (c) $H_0^{ab}: (\alpha\beta)_{it}\equiv 0 \text{ for all } i,t$, respectively.

These and other hypotheses will be utilized in the data analysis Section~\ref{Analysis of the Data sets}.


\section{The Test Statistic and its Asymptotics}\label{The Test Statistic and its Asymptotics}

We derive appropriate inference procedures for $H_0(\vT)$ and analyze their asymptotic properties 
under 
%
the following asymptotic frameworks
\begin{eqnarray}
a\in\N\; \text{ fixed}  &\text{and}& \min(d,n_1,\dots,n_a)\to \infty, \label{eq: as frame 1}\\
d\in\N\; \text{ fixed}  &\text{and}& \min(a,n_1,\dots,n_a)\to \infty, \label{eq: as frame 2}\\
&\text{or}& \min(a, d, n_1,\dots,n_a)\to \infty, \label{eq: as frame 3}
\end{eqnarray}
 as $N\to \infty$. Here, no dependency on how the dimension $d=d(N)$ in \eqref{eq: as frame 1} and \eqref{eq: as frame 3} 
or the number of groups $a=a(N)$  in \eqref{eq: as frame 2}-\eqref{eq: as frame 3} converges to infinity with respect to the sample sizes $n_i$ and $N$ 
is postulated. In particular, we cover high-dimensional ($d>n_i$ or even $d>N$) as well as low-dimensional settings. 
For a lucid presentation of subsequent results and proofs we additionally assume throughout that
\bqan
\frac{n_i}{N}\rightarrow \rho_i\in (0,1)\quad i=1,\dots,a.
\eqan
However, by turning to convergent subsequences, all results can be shown to hold under the more general condition $$0<\liminf n_i/N\leq \limsup n_i/N <1,\quad (i=1,\dots,a).$$ 
{It is convenient to measure deviations from the null hypothesis $H_0(\vT):\vT\vmu=\vnull$ 
by means of the quadratic form
\bqan\label{eq: stat}
Q_N=N\cdot\overline {\vX}^\top \vT\overline {\vX},
\eqan}
where 
${\overline{\vX}^\top=(\overline\vX_1^\top,\dots\overline\vX_a^\top)}$ with $\overline {\vX}_{i} = n_i^{-1} \sum_{j=1}^{n_i} \vX_{i,j}, i=1,\dots,a,$ denotes the vector of pooled group means. 

Since $Q_N$ is in general asymptotically degenerated under \eqref{eq: as frame 1}-\eqref{eq: as frame 3} we study its standardized version. 
To this end, note that under the null hypothesis it holds that
{\[\begin{array}{ll}\sqrt{N}\cdot \vT\overline {\vX}&\stackrel{H_0}{\sim}\mathcal{N}_{ad}\left(\boldsymbol{0}_{ad}, \vT \left[\bigoplus\limits_{i=1}^a \frac N{n_i} \vSigma_i\right] \vT \right),\end{array}\]}
 due to assumption \eqref{eq: model}. Thus, it follows from classical theorems about moments of quadratic forms, see e.g. \cite{Buch1} or 
\Cref{QF3} in the supplement, that its mean and variance under the null hypothesis can be expressed as
{\begin{eqnarray}\label{eq: muQ}
\E_{H_0}\left(Q_N\right)&=&\tr\left(\vT\left[\bigoplus_{i=1}^a \frac N{n_i} \vSigma_i\right] \right)=\sum_{i=1}^a \frac{N}{n_i}(\vT_W)_{ii}\tr\left(\vT_S\vSigma_i\right),
\end{eqnarray}
\begin{eqnarray}\nonumber
\Var_{H_0}\left(Q_N\right)&{=}&2\tr\left(\left(\vT\left[\bigoplus\limits_{i=1}^a \frac N{n_i} \vSigma_i\right] \right)^2\right)\\\nonumber  &=&2  \sum\limits_{i=1}^a \sum\limits_{r=1}^a \frac{N^2}{n_in_r}(\vT_W)_{ir}(\vT_W)_{ri}\tr\left(\vT_S\vSigma_i\vT_S\vSigma_r\right)
\\\label{eq: VarQ}
&=&2\sum\limits_{i=1}^a \sum\limits_{r=1}^a \frac{N^2}{n_in_r}{(\vT_W)_{ir}}^2\tr\left(\vT_S\vSigma_i\vT_S\vSigma_r\right)
\\\nonumber
&=&4\sum\limits_{i,r=1, r<i}^a  \frac{N^2}{n_in_r}{(\vT_W)_{ir}}^2\tr\left(\vT_S\vSigma_i\vT_S\vSigma_r\right)+2\sum\limits_{i=1}^a  \frac{N^2}{n_i^2}{(\vT_W)_{ii}}^2\tr\left(\left(\vT_S\vSigma_i\right)^2\right).
\end{eqnarray}
Henceworth we investigate the asymptotic behaviour (under $H_0(\vT)$) of the standardized quadratic form 
$\widetilde W_N  =\{Q_N-E_{H_0}(Q_N)\}/\Var_{H_0}\left(Q_N\right)^{1/2}$. 
{Denoting by $\vV_N:=\bigoplus_{i=1}^a \frac N{n_i} \vSigma_i$ the inversely weighted combined covariance matrix 
the representation theorem for quadratic forms  
\cite{Buch1}[p.90], implies that}
{\bqan\label{eq: tileW}
\widetilde W_N  &=& \frac{Q_N-\E_{H_0}(Q_N)}{\Var_{H_0}\left(Q_N\right)^{1/2}} 
\ \stackrel{\mathcal{D}}{=} \ \sum\limits_{s=1}^{ad} 
\frac{\lambda_s}{\sqrt{\sum_{\ell=1}^{ad} \lambda_\ell^2}}\left(\frac{C_s-1}{\sqrt{2}}\right).
\eqan}
Here '$\stackrel{\mathcal{D}}{=}$' denotes equality in distribution, 
 $\lambda_s$ are the eigenvalues of $\vT\vV_N\vT$ in decreasing order, and $(C_s)_s$ is a sequence of independent 
$\chi_1^2$-distributed random variables. Note, that the eigenvalues $\lambda_s$ also depend on the dimension
$d$ and the sample sizes $n_i$. Transferring the results of \cite{Paper1} for the one-group design with $a=1$ to our general setting, we obtain 
the subsequent asymptotic null distributions of the standardized quadratic form for all asymptotic settings \eqref{eq: as frame 1}-\eqref{eq: as frame 3}. 
\begin{theorem}\label{Theorem3}
{Let $\beta_s={\lambda_s}\Big/{\sqrt{\sum_{\ell=1}^{ad}\lambda_\ell^2}}$ for $s=1,\dots,ad$. }
Then $\widetilde W_N$ has, under $H_0(\vT)$, and one of the 
frameworks \eqref{eq: as frame 1}-\eqref{eq: as frame 3} asymptotically
\begin{itemize} 
\item[a)]a standard normal distribution  if
\[\beta_1 = \max_{s\leq ad} \beta_{s} \to 0 \hspace{0.5cm} \text{as}\hspace{0.2cm} N \to \infty,\]
\item[b)]a standardized $\left(\chi_1^2-1\right)/\sqrt{2}$ distribution if
\[\beta_1\to 1 \hspace{0.5cm} \text{as}\hspace{0.2cm} N \to \infty,\]
\item[c)]the same distribution as the random variable  $\sum_{s=1}^\infty b_s \left(C_s-1\right)/\sqrt 2$,  if
\[\text{for all } s\in \N \hspace{0.5cm}\beta_s\to b_s \hspace{0.5cm} \text{as }\hspace{0.2cm} N \to \infty,\]
for a decreasing sequence $(b_s)_s$ in $[0,1]$ with $\sum_{s=1}^\infty b_s^2=1$.
\end{itemize}
\vspace{-.5cm}
\end{theorem}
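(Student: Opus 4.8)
The plan is to work throughout with the characteristic function of the weighted chi-square representation \eqref{eq: tileW}, handling the three regimes by one common expansion, reinforced in parts (b) and (c) by a truncation argument. Since $\vT$ is a projection and $\vV_N$ is positive definite, $\vT\vV_N\vT$ is positive semidefinite, so the $\lambda_s$ and hence the weights $\beta_s$ are nonnegative with $\sum_{s=1}^{ad}\beta_s^2=1$; in particular $\beta_1=\max_s\beta_s$ follows from the decreasing ordering. With $C\sim\chi_1^2$ the building block $(C-1)/\sqrt2$ has characteristic function $\varphi(t)=e^{-it/\sqrt2}(1-\sqrt2\,it)^{-1/2}$, so by independence
\[
\varphi_{\widetilde W_N}(t)=\prod_{s=1}^{ad} e^{-i\beta_s t/\sqrt2}\,(1-\sqrt2\,i\beta_s t)^{-1/2}.
\]

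For part (a) I would take logarithms and insert $-\tfrac12\log(1-z)=\tfrac12 z+\tfrac14 z^2+O(|z|^3)$ with $z=\sqrt2\,i\beta_s t$. The linear terms cancel the phase factors $e^{-i\beta_s t/\sqrt2}$, the quadratic terms sum to $-\tfrac{t^2}{2}\sum_s\beta_s^2=-\tfrac{t^2}{2}$, and one is left with
\[
\log\varphi_{\widetilde W_N}(t)=-\frac{t^2}{2}+R_N(t),\qquad |R_N(t)|\le C|t|^3\sum_{s=1}^{ad}\beta_s^3 .
\]
The remainder is killed by the single scalar bound $\sum_s\beta_s^3\le(\max_s\beta_s)\sum_s\beta_s^2=\beta_1\to0$, and for fixed $t$ the expansion is legitimate once $\beta_1$ is small enough that $|z|\le\tfrac12$ holds uniformly in $s$. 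Hence $\varphi_{\widetilde W_N}(t)\to e^{-t^2/2}$, and L\'evy's continuity theorem delivers the standard normal limit.

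For part (b) I would peel off the leading term and apply Slutsky's theorem: the remainder $\sum_{s\ge2}\beta_s(C_s-1)/\sqrt2$ has mean zero and variance $\sum_{s\ge2}\beta_s^2=1-\beta_1^2\to0$, hence tends to $0$ in probability, while $\beta_1(C_1-1)/\sqrt2$ converges in distribution to $(\chi_1^2-1)/\sqrt2$ because $\beta_1\to1$. For part (c) I would invoke the standard triangular-array approximation lemma. For fixed $K$ the truncation $\sum_{s=1}^K\beta_s(C_s-1)/\sqrt2$ converges in distribution, as $N\to\infty$, to $\sum_{s=1}^K b_s(C_s-1)/\sqrt2$ since each $\beta_s\to b_s$; letting $K\to\infty$ this converges to the target $\sum_{s=1}^\infty b_s(C_s-1)/\sqrt2$ because $\sum_{s>K}b_s^2\to0$; and the discarded tail has variance $1-\sum_{s\le K}\beta_s^2$, whose $\limsup_N$ equals $\sum_{s>K}b_s^2$, so Chebyshev's inequality supplies the uniform negligibility required by the lemma.

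The main obstacle I anticipate is the bookkeeping of uniformity rather than any isolated hard estimate. In part (a) the logarithmic expansion and its cubic remainder must be controlled uniformly over the index set $s=1,\dots,ad$, which itself diverges under \eqref{eq: as frame 2}--\eqref{eq: as frame 3}; in part (c) the $N\to\infty$ and $K\to\infty$ limits must be interchanged legitimately. Both points are dispatched by the two scalar controls $\sum_s\beta_s^3\le\beta_1$ and $\sum_{s>K}\beta_s^2=1-\sum_{s\le K}\beta_s^2$, so that no further structural information about the eigenvalues of $\vT\vV_N\vT$ enters the argument.
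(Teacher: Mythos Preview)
Your proposal is correct and aligns with the paper's approach. The paper's own proof simply invokes the representation \eqref{eq: tileW} and defers to \cite{Paper1}[Theorem~2.1], noting that only the representation, the divergence of the number of summands, and the assumed limits of the $\beta_s$ are needed; your argument supplies exactly the standard details one expects behind that reference---characteristic functions with the cubic remainder controlled by $\sum_s\beta_s^3\le\beta_1$ for (a), a Slutsky peel-off for (b), and the Billingsley triangular-array approximation lemma for (c).
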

It is worth to note that the influence of the different asymptotic frameworks are hidden in the corresponding conditions on 
the sequence of standardized eigenvalues $(\beta_s)_s$, which depend on both, $a$ and $d$. 

Since these quantities are unknown in general we cannot apply the result directly. 
In particular, we are not even able to calculate the test statistic ${\widetilde W}_N$, not to mention to choose its correct limit distribution. 
To this end, we first introduce novel unbiased estimates of the unknown traces involved in \eqref{eq: muQ}-\eqref{eq: VarQ} and discuss their mathematical properties. 
Plugging them into \eqref{eq: muQ}-\eqref{eq: VarQ} leads to the 
calculation of adequately standardized test statistics. Finally, the choice of proper critical values is discussed in Section~\ref{Better Approximations}.

\subsection{Symmetrized Trace Estimators}

Here we derive unbiased and ratio-consistent estimates for the unknown traces $\tr\left(\vT_S \vSigma_i\right), 
\tr\left((\vT_S \vSigma_i\right)^2)$ and $\tr\left(\vT_S\vSigma_i \vT_S \vSigma_r\right), i\neq r,$ given in \eqref{eq: muQ}-\eqref{eq: VarQ}.
Since it is not obvious that the usual plug-in estimates that are based on empirical covariance matrices are useful in high-dimensional settings we follow the approach of \cite{Brunner, Paper1} and directly estimate the traces. Different, to the one-sample design studied therein we face the problem of additional nuisance parameters -- the mean vectors $\vmu_i$.  To avoid their estimation we adopt 
Tyler's symmetrization trick from $M$-estimates of scatter (see e.g. \cite{Croux:1994}, \cite{Duembgen:1998}  or \cite{Tyler:2009}) to the present situation, see also 
\cite{Brunner0}. 
In particular, we consider differences of observation pairs $(\ell_1,\ell_2), \ell_1\neq \ell_2,$ from the same group which fulfill
$\left({\vX}_{i,\ell_1}-{\vX}_{i,\ell_2}\right)\sim\mathcal{N}_d\left(\boldsymbol{0}_d,2\vSigma_i\right)$ and introduce 
the following novel estimators for $i= 1,\dots,a:$
{\begin{align}
A_{i,1}&=\frac 1 {2\cdot \binom {n_i} {2}} \sum\limits_{\begin{footnotesize}\substack{\ell_1,\ell_2=1\\ \ell_1>\ell_2}\end{footnotesize}}^{n_i} \left({\vX}_{i,\ell_1}-{\vX}_{i,\ell_2}\right)^\top \vT_S\left({\vX}_{i,\ell_1}-{\vX}_{i,\ell_2}\right),
\label{eq: A1}
\\
A_{i,r,2}&=\frac  1 {4\cdot\binom {n_i} {2}\binom {n_r} {2}} {\sum\limits_{\begin{footnotesize}\substack{\ell_1,\ell_2=1\\ \ell_1>\ell_2}\end{footnotesize}}^{n_i}\sum\limits_{\begin{footnotesize}\substack{k_1,k_2=1\\ k_1>k_2}\end{footnotesize}}^{n_r}\left[\left({\vX}_{i,\ell_1}-{\vX}_{i,\ell_2}\right)^\top \vT_S\left({\vX}_{r,k_1}-{\vX}_{r,k_2}\right)\right]^2},
\label{eq: A2}
\\
A_{i,3}&=\frac 1 {4\cdot6\binom {n_i} {4}} \sum\limits_{\begin{footnotesize}\substack{\ell_1,\ell_2=1\\ \ell_1>\ell_2}\end{footnotesize}}^{n_i}
\sum\limits_{\begin{footnotesize}\substack{k_2=1\\k_2\neq \ell_1\neq \ell_2 }\end{footnotesize}}^{n_i}\sum\limits_{\begin{footnotesize}\substack{k_1=1\\ \ell_2\neq \ell_1\neq k_1>k_2}\end{footnotesize}}^{n_i}
\left[\left({\vX}_{i,\ell_1}-{\vX}_{i,\ell_2}\right)^\top \vT_S\left({\vX}_{i,k_1}-{\vX}_{i,k_2}\right)\right]^2,
\label{eq: A3}
\\
A_4&=\sum_{i=1}^a \left(\frac{N}{n_i }\right)^2 {(\vT_W)_{ii}}^2 A_{i,3}+2\sum_{i=1}^a\sum_{r=1 , r < i}^a \frac{N^2}{n_in_r } {(\vT_W)_{ir}}^2 A_{i,r,2}
\label{eq: A4}.
\end{align}}\\
Here and throughout the paper expressions of the kind  $a\neq b\neq c$  mean that the indices are pairwise different. In this sense all 
estimators \eqref{eq: A1}-\eqref{eq: A4} are {\it symmetrized U-statistics}, where the kernel is given by a specific quadratic or bilinear form. 
Their properties are analyzed below.\vspace{-.5cm}
\begin{LeA}\label{Schae1}
For any $\vmu\in\renu^{ad}$ and $i=1,\dots, a$ it holds that 
{\begin{enumerate}
\item $\widehat{E_{H_0}}(Q_N) := \sum_{i=1}^a \frac {N}{n_i} {(\vT_W)_{ii}}A_{i,1}$  is an unbiased and ratio-consistent estimator for $\E_{H_0}(Q_N)$.
\item $A_4$ is an unbiased and ratio-consistent estimator for $\tr\left(\left(\vT\vV_N \right)^2\right).$ 
\item $A_{i,1},A_{i,r,2}$ and $A_{i,3}$ are unbiased and ratio-consistent estimators for $\tr\left(\vT_S\vSigma_i\right),\tr\left(\vT_S\vSigma_i\vT_S\vSigma_r\right)$ and $\tr\left(\left(\vT_S\vSigma_i\right)^2\right),$ respectively.
\end{enumerate}}
\end{LeA}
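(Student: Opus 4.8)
The plan is to exploit the symmetrization, which is exactly what makes the qualifier ``for any $\vmu\in\renu^{ad}$'' work. Writing $\vZ_{i,j}:=\vX_{i,j}-\vmu_i\sim\mathcal{N}_d(\vnull,\vSigma_i)$, every estimator in \eqref{eq: A1}--\eqref{eq: A4} depends on the data only through within-group differences, and $\vX_{i,\ell_1}-\vX_{i,\ell_2}=\vZ_{i,\ell_1}-\vZ_{i,\ell_2}$ carries no information about $\vmu$. Hence the joint law of all the $A$'s is the same as for centred data, so I may assume $\vmu=\vnull$ throughout. For unbiasedness I would compute one kernel at a time: using $\vX_{i,\ell_1}-\vX_{i,\ell_2}\sim\mathcal{N}_d(\vnull,2\vSigma_i)$ and $\E[\vY^\top\vT_S\vY]=\tr(\vT_S\Cov(\vY))$ gives $\E[A_{i,1}]=\tr(\vT_S\vSigma_i)$ after dividing by the $\binom{n_i}{2}$ summands and the factor $2$. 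For the bilinear kernels I would use that for independent centred Gaussians $\vU,\vV$ one has $\E[(\vU^\top\vT_S\vV)^2]=\tr(\vT_S\Cov(\vV)\vT_S\Cov(\vU))$; the index constraints in \eqref{eq: A2}--\eqref{eq: A3} ensure the two difference-factors are built from disjoint, hence independent, observations, producing $4\,\tr(\vT_S\vSigma_i\vT_S\vSigma_r)$ and $4\,\tr((\vT_S\vSigma_i)^2)$, which match the normalising constants $4\binom{n_i}{2}\binom{n_r}{2}$ and $4\cdot 6\binom{n_i}{4}$ (the latter counting the $6\binom{n_i}{4}$ admissible ordered quadruples with $\ell_1>\ell_2$, $k_1>k_2$ and all four indices distinct). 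This settles the unbiasedness in claim~3.

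Claims~1 and~2 then follow from claim~3 by linearity: $\E[\widehat{E_{H_0}}(Q_N)]=\sum_i\frac{N}{n_i}(\vT_W)_{ii}\tr(\vT_S\vSigma_i)$ is precisely the right-hand side of \eqref{eq: muQ}, while $\E[A_4]$ reproduces $\tfrac12\Var_{H_0}(Q_N)=\tr((\vT\vV_N)^2)$ term by term, once one matches the diagonal contributions $A_{i,3}$ with the $\sum_i N^2 n_i^{-2}(\vT_W)_{ii}^2$ sum and the off-diagonal $A_{i,r,2}$ with the $\sum_{r<i}$ sum in \eqref{eq: VarQ}.

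For ratio-consistency it suffices, by unbiasedness and Chebyshev's inequality, to show each relative variance $\Var(\widehat\theta)/\theta^2\to 0$. The within-group estimators $A_{i,1}$ and $A_{i,3}$ are the one-sample estimators of \cite{Paper1} applied to group $i$, so I would invoke (or re-derive through the Hoeffding decomposition) bounds of the type $\Var(A_{i,1})\lesssim n_i^{-1}\tr((\vT_S\vSigma_i)^2)$ and the analogous bound for $A_{i,3}$. Combined with the PSD trace inequalities $\tr((\vT_S\vSigma_i)^2)\le\tr(\vT_S\vSigma_i)^2$ and $\tr((\vT_S\vSigma_i)^3)\le\|\vT_S\vSigma_i\|\,\tr((\vT_S\vSigma_i)^2)$ (applied to the PSD matrix $\vT_S^{1/2}\vSigma_i\vT_S^{1/2}$), these give relative variances of order $n_i^{-1}$. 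For the new cross-group estimator $A_{i,r,2}$ I would run the two-sample analogue of the Hoeffding expansion and control the ratio via the Cauchy--Schwarz bound $\tr(\vT_S\vSigma_i\vT_S\vSigma_r)^2\le\tr((\vT_S\vSigma_i)^2)\,\tr((\vT_S\vSigma_r)^2)$, the Frobenius inner product of $\vT_S^{1/2}\vSigma_i\vT_S^{1/2}$ and $\vT_S^{1/2}\vSigma_r\vT_S^{1/2}$, yielding a relative variance of order $\{\min(n_i,n_r)\}^{-1}$.

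It then remains to pass to the aggregated estimators uniformly over the three frameworks \eqref{eq: as frame 1}--\eqref{eq: as frame 3}. For $\widehat{E_{H_0}}(Q_N)$ the groups are independent, so its variance is $\sum_i(N/n_i)^2(\vT_W)_{ii}^2\Var(A_{i,1})$; writing $w_i:=\frac{N}{n_i}(\vT_W)_{ii}\tr(\vT_S\vSigma_i)\ge 0$ and using $\sum_i w_i^2\le(\sum_i w_i)^2$, the relative variance is at most a constant times $(\min_i n_i)^{-1}$, which vanishes in every framework no matter how $a$ and $d$ grow. The genuinely delicate step, and the one I expect to be the main obstacle, is the variance of $A_4$: the summands $A_{i,r,2}$ and $A_{i,r',2}$ are dependent whenever they share a group index, so when $a\to\infty$ there are $O(a^2)$ correlated pairs whose covariances must not accumulate. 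Here I would bound each covariance by Cauchy--Schwarz in terms of the individual relative variances established above, organise the double sum according to the shared index, and absorb the coefficients into $\tr(\vT_W^2)=\rank(\vT_W)$; the same PSD trace inequalities should then collapse everything to a multiple of $(\min_i n_i)^{-1}\,\tr((\vT\vV_N)^2)^2$. Verifying that this bookkeeping is uniform in both $a$ and $d$ is where the real work lies.
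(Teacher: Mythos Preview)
Your plan is correct and essentially matches the paper's proof. Unbiasedness is handled identically (symmetrization plus the quadratic/bilinear moment identities), and for ratio-consistency the paper also reduces to Chebyshev via relative-variance bounds of order $n_{\min}^{-1}$.

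Two small differences in execution are worth noting. First, instead of a Hoeffding decomposition the paper simply counts, for each U-statistic, the number of index pairs that are \emph{not} totally disjoint (e.g.\ $\binom{n_i}{2}-\binom{n_i-2}{2}$ for $A_{i,1}$, $6\binom{n_i}{4}-6\binom{n_i-4}{4}$ for $A_{i,3}$) and bounds every surviving covariance by the common variance of a single kernel via $\Cov(X,Y)\le\Var(X)$ for identically distributed terms. This is cruder than Hoeffding but already gives the $\mathcal{O}(n_i^{-1})$ rate you need. Second, for the $A_4$ aggregation the paper avoids any separate bookkeeping through $\tr(\vT_W^2)=\rank(\vT_W)$; it uses the triangle inequality for standard deviations, $\sqrt{\Var(\sum_k X_k)}\le\sum_k\sqrt{\Var(X_k)}$, and then observes that each $\sqrt{\Var\bigl(\tfrac{N^2}{n_in_r}(\vT_W)_{ir}^2 A_{i,r,2}\bigr)}$ is $\sqrt{\mathcal{O}(1/n_{\min})}$ times the corresponding \emph{mean} term $\tfrac{N^2}{n_in_r}(\vT_W)_{ir}^2\tr(\vT_S\vSigma_i\vT_S\vSigma_r)$, while the means sum to at most $\tr((\vT\vV_N)^2)$. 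This yields the clean bound $\mathcal{O}(1/n_{\min})$ directly. Your route via $\tr(\vT_W^2)$ would also work, but be careful: if you factor out the trace ratios and are left with a raw $\rank(\vT_W)$ in the numerator, that can be $\Theta(a)$ and you would lose uniformity in the frameworks with $a\to\infty$. Keeping the $(\vT_W)_{ir}^2$ weights glued to the trace terms, as the paper does, sidesteps this.
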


\begin{re}
(a) Recall that an $\renu$-valued estimator $\widehat{\theta}_N$ is ratio-consistent for a sequence of real parameters $\theta_N$ iff $\widehat{\theta}_N/\theta_N \to 1$ in probability as $N\to\infty$. Here the estimators and parameters may depend on $a = a(N)$ and/or $d=d(N)$.\\
(b) Studying the proof of Lemma~\ref{Schae1} given in the supplementary material in detail, we see that all estimators are even (dimension-)stable in the sense of \cite{Brunner}, i.e. they fulfill $|\E(\widehat{\theta}_N/\theta_N - 1)| \leq b_N$ and  $\Var(\widehat{\theta}_N/\theta_N) \leq c_N$ for sequences $b_N,c_N\downarrow 0$ not depending on $a$ and $d$.\vspace{-.5cm}
\end{re}
It follows from Lemma~\ref{Schae1} that 
$$
\widehat{Var_{H_0}}(Q_N) := 2\sum_{i=1}^a \left(\frac{N}{n_i }\right)^2{(\vT_W)_{ii}}^2A_{i,3}+4\sum_{i=1}^a\sum_{r=1 , r < i}^a\frac{N^2}{n_in_r} {(\vT_W)_{ir}}^2A_{i,r,2}=2A_4
$$
is an unbiased estimator of $Var_{H_0}(Q_N)$. This motivates to study the standardized quadratic form 
\[ W_N=\frac { Q_N- \widehat{E_{H_0}}(Q_N) }{\widehat{Var_{H_0}}(Q_N)^{1/2}}\]\\
for testing $H_0(T)$. Its asymptotic behaviour under $H_0(T):\vT\vmu=\vnull_{ad}$ is summarized below.
\vspace{-.5cm}
\begin{theorem}\label{Theorem4}
Under $H_0(T):\vT\vmu=\vnull_{ad}$ and one of the 
frameworks \eqref{eq: as frame 1}-\eqref{eq: as frame 3} the statistic $W_N$ has the same asymptotic limit distributions as $\widetilde W_N$, if the respective conditions (a)-(c) from \Cref{Theorem3} are fulfilled.
\vspace{-.5cm}
\end{theorem}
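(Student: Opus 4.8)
The plan is to deduce \Cref{Theorem4} from \Cref{Theorem3} by a Slutsky-type comparison of the feasible statistic $W_N$ with the oracle statistic $\widetilde W_N$. Write $\mu_Q=\E_{H_0}(Q_N)$, $\sigma_Q^2=\Var_{H_0}(Q_N)$ and $\widehat\mu_Q=\widehat{E_{H_0}}(Q_N)$, $\widehat\sigma_Q^2=\widehat{Var_{H_0}}(Q_N)=2A_4$. The starting point is the algebraic identity
\[
W_N=\frac{Q_N-\widehat\mu_Q}{\widehat\sigma_Q}
=\widetilde W_N\cdot\frac{\sigma_Q}{\widehat\sigma_Q}
+\frac{\mu_Q-\widehat\mu_Q}{\sigma_Q}\cdot\frac{\sigma_Q}{\widehat\sigma_Q}.
\]
Once it is shown that $\sigma_Q/\widehat\sigma_Q\pkonv 1$ and $(\mu_Q-\widehat\mu_Q)/\sigma_Q\pkonv 0$, Slutsky's theorem applied to the distributionally convergent sequence $\widetilde W_N$ yields that $W_N$ shares its limit law in each of the cases (a)--(c). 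Since Slutsky's theorem is insensitive to the shape of the limit, the non-normal limits in (b) and (c) require no separate treatment, and all three frameworks \eqref{eq: as frame 1}--\eqref{eq: as frame 3} are covered at once.

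The first factor is immediate: by Lemma~\ref{Schae1}(2), $\widehat\sigma_Q^2/\sigma_Q^2=A_4/\tr((\vT\vV_N)^2)\pkonv 1$, and since $A_4\ge 0$ with strictly positive limit, the continuous mapping theorem for $x\mapsto x^{-1/2}$ gives $\sigma_Q/\widehat\sigma_Q\pkonv 1$. The genuine obstacle is the second term, and this is where the argument cannot simply quote ratio-consistency: the bound $\widehat\mu_Q/\mu_Q\pkonv 1$ from Lemma~\ref{Schae1}(1) controls $(\widehat\mu_Q-\mu_Q)/\mu_Q$, but dividing instead by $\sigma_Q$ multiplies this by $\mu_Q/\sigma_Q=2^{-1/2}\sum_s\beta_s$, which may diverge (e.g.\ in case (a), where many comparable $\beta_s$ occur). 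The way around this is to use that $\widehat\mu_Q$ is \emph{unbiased}, so $(\mu_Q-\widehat\mu_Q)/\sigma_Q$ has mean zero and the claim reduces, via Chebyshev's inequality, to showing $\Var(\widehat\mu_Q)/\sigma_Q^2\to 0$.

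To establish this variance ratio I would exploit independence across groups and the $U$-statistic structure of $A_{i,1}$. Since $\widehat\mu_Q=\sum_i \tfrac{N}{n_i}(\vT_W)_{ii}A_{i,1}$ with the $A_{i,1}$ independent in $i$, one has $\Var(\widehat\mu_Q)=\sum_i (N/n_i)^2(\vT_W)_{ii}^2\Var(A_{i,1})$, and the second-order variance computation already carried out in the proof of Lemma~\ref{Schae1} gives $\Var(A_{i,1})\le C\,\tr((\vT_S\vSigma_i)^2)/n_i$. Consequently
\[
\Var(\widehat\mu_Q)\ \le\ \frac{C}{\min_i n_i}\sum_{i=1}^a\frac{N^2}{n_i^2}(\vT_W)_{ii}^2\,\tr\big((\vT_S\vSigma_i)^2\big).
\]
The sum on the right is, up to a factor $2$, exactly the diagonal part of $\sigma_Q^2$ in \eqref{eq: VarQ}, and every off-diagonal contribution $\tr(\vT_S\vSigma_i\vT_S\vSigma_r)$ is nonnegative, being the trace of a product of the positive semidefinite matrices $\vT_S\vSigma_i\vT_S$ and $\vSigma_r$. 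Hence that diagonal part is bounded by $\sigma_Q^2$, so $\Var(\widehat\mu_Q)/\sigma_Q^2\le C/(2\min_i n_i)\to 0$ in every framework, as $\min_i n_i\to\infty$. This is the crux of the proof and the single point at which the extra $1/n_i$-gain of the mean estimator's variance over the squared standardization is needed; combining the three displays with Slutsky's theorem then finishes the argument.
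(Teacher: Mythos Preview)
Your proposal is correct and follows essentially the same route as the paper. The paper isolates the centering step as a separate lemma (Lemma~\ref{Schae2}), proving $(\widehat\mu_Q-\mu_Q)/\sigma_Q\pkonv 0$ via exactly your Chebyshev argument with the bound $\Var(A_{i,1})\le C\,\tr((\vT_S\vSigma_i)^2)/n_i$ and the observation that the diagonal sum $\sum_i (N/n_i)^2(\vT_W)_{ii}^2\tr((\vT_S\vSigma_i)^2)$ is dominated by $\tr((\vT\vV_N)^2)$ because all cross terms are nonnegative; it then combines this with the ratio-consistency of $A_4$ and Slutsky, just as you do. Your explicit remark that ratio-consistency of $\widehat\mu_Q$ alone would not suffice (since $\mu_Q/\sigma_Q$ can diverge) is a nice clarification of why the sharper $\tr((\vT_S\vSigma_i)^2)$ bound rather than $\tr^2(\vT_S\vSigma_i)$ is needed at this point.
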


The result shows that it is not reasonable to approximate the unknown distribution of the test statistic with a fixed distribution to obtain a valid test procedure. For example, choosing $z_{1-\alpha}$, the $(1-\alpha)$-quantile of the standard-normal distribution ($\alpha\in(0,1)$), as critical value 
would lead to a valid asymptotic level $\alpha$ test $\psi_z=\mathbf{1}\{W_N > z_{1-\alpha}\}$ in case of $\beta_1\to 0$, i.e. $\E_{H_0}(\psi_z)\to\alpha$. However, for $\beta_1\to 1$ we would obtain $\E_{H_0}(\psi_z)\to P(\chi_1^2>\sqrt{2}z_{1-\alpha} + 1)$ which may lead to an 
asymptotically liberal ($\alpha=0.01$ or $0.05$) or conservative ($\alpha=0.1$) test decision, see Table~\ref{tab: Level}.
Contrary, choosing $c_{1-\alpha}=(\chi_{1;1-\alpha}^2-1)/\sqrt{2}$ as critical value (where $\chi_{1;1-\alpha}^2$ denotes the $(1-\alpha)$-quantile of the $\chi_1^2$-distribution) for the test $\psi_\chi = \mathbf{1}\{W_N > c_{1-\alpha}\}$, 
it follows that 
$\E_{H_0}(\psi_\chi)\to\alpha$ if $\beta_1\to 1$ but $\E_{H_0}(\psi_\chi)\to 1-\Phi(c_{1-\alpha})$ for $\beta_1\to 0$, 
where  $\Phi$ denotes the cumulative distribution function of $\mathcal{N}(0,1)$. 
Again we obtain an asymptotically liberal ($\alpha=0.1$) or extremely conservative ($\alpha= 0.05$ or $0.01$) test decision, see the last column of Table~\ref{tab: Level}.
\begin{table}[h]

\caption{\small Asymptotic levels of the tests $\psi_z$ and $\psi_\chi$ with fixed critical values under the null hypothesis and all asymptotic frameworks \eqref{eq: as frame 1}-\eqref{eq: as frame 3}.}
\label{tab: Level}
\bcen
\begin{tabular}{|c|c|c|c|c|}\hline
 chosen  & \multicolumn{4}{c|}{True asymptotic level of the test}\\
level $\alpha$ & $\psi_z$ ($\beta_1\to 0$) & $\psi_z$ ($\beta_1\to 1$) &  $\psi_\chi$ ($\beta_1\to 0$) & $\psi_\chi$ ($\beta_1\to 1$) \\\hline
0.10 & 0.10 & 0.09354& 0.11391 & 0.10\\
0.05 & 0.05 & 0.06819 &  0.02226  & 0.05\\
0.01 & 0.01 & 0.03834 & 0.00003 & 0.01\\\hline
 \end{tabular}
\ecen
\vspace{-.5cm}
\end{table}

Hence, an indicator (i.e. estimator) for whether $\beta_1\to 0$, $\beta_1\to 1$ or betwixt would be desirable. Nevertheless, even if the tests with fixed critical values are asymptotically correct ($\psi_z$ in case of $\beta_1\to0$ or $\psi_\chi$ in case of $\beta_1\to1$), their true type-$I$-error control may be poor for small sample sizes, see the simulations in \Cref{Asymptotic distribution}.\\
Thus, in any case it seems more appropriate to approximate $W_N$ by a sequence of standardized distributions as already advocated in \cite{Paper1} for the case of $a=1$. We will propose such approximations in the next Sections, where also a check criterion for $\beta_1\to 0$ or  $\beta_1\to 1$ is presented.


\section{Better Approximations}\label{Better Approximations}

To motivate the subsequent approximation, recall from \eqref{eq: tileW} that $\widetilde{W}_N$ is of weighted $\chi^2$-form. Following \cite{Zhang} it is reasonable to approximate statistics of this from by a standardized $(\chi^2_f-1)/\sqrt{2}$-distribution such that the first three moments coincide. Straightforward calculations show that this is achieved by approximating with
\begin{equation}\label{eq: Kf}
 K_{f_P}=\frac{\chi_{f_P}^2-{f_P}}{\sqrt{ 2 {f_P}}} \quad\text{ such that }\quad {f_P}= \frac{\tr^3\left(\left(\vT\vV_N\right)^2\right)}{\tr^2\left(\left(\vT\vV_N\right)^3\right)}.
\end{equation}
In case of $a=1$ this simplifies to the method presented in \cite{Paper1}. There it has already been seen that the approximation \eqref{eq: Kf} performs much better for smaller sample sizes and/or dimensions than the above approaches with a fixed distribution. We will later rediscover this observation in Section~\ref{Simulations} for our present design with general $a$. The next theorem gives a mathematical reason for this approximation.
\vspace{-.5cm}
\begin{theorem}\label{Theorem5}
Under the conditions of Lemma~\ref{Theorem3} and one of the 
frameworks \eqref{eq: as frame 1}-\eqref{eq: as frame 3} we have that $K_{{f_P}}$ given in \eqref{eq: Kf} has, under $H_0:\vT\vmu=\boldsymbol{0}_{ad}$, asymptotically 
\begin{itemize} 
\item[a)]a standard normal distribution  if $\beta_1\to 0$ as $N \to \infty$,
\item[b)]a standardized $\left(\chi_1^2-1\right)/\sqrt{2}$ distribution if $\beta_1\to 1$ as $N \to \infty$.
\end{itemize}
\vspace{-.5cm}
\end{theorem}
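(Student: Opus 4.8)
The plan is to reduce everything to the standardized eigenvalues $\beta_s=\lambda_s/\sqrt{\sum_\ell\lambda_\ell^2}$ already appearing in \eqref{eq: tileW} and to show that the single parameter $f_P$ is, in the two boundary regimes, completely governed by $\beta_1$. First I would rewrite the traces defining $f_P$ in terms of the $\lambda_s$. Since $\vT$ is a symmetric projection, $\vT^2=\vT$, cyclic invariance of the trace gives $\tr((\vT\vV_N)^k)=\tr((\vT\vV_N\vT)^k)=\sum_{s=1}^{ad}\lambda_s^k$ for every $k\ge 1$, where $\lambda_s\ge 0$ because $\vT\vV_N\vT$ is positive semidefinite. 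Substituting $k=2,3$ into \eqref{eq: Kf} and using $\lambda_s=\beta_s\sqrt{\sum_\ell\lambda_\ell^2}$ yields the clean identity
\[
f_P=\frac{\left(\sum_s\lambda_s^2\right)^3}{\left(\sum_s\lambda_s^3\right)^2}=\frac{1}{\left(\sum_{s=1}^{ad}\beta_s^3\right)^2},
\]
so that $f_P$ is a deterministic functional of $(\beta_s)_s$, and $\sum_s\beta_s^2=1$ is the only constraint I need.

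Next I would pin down the two limiting regimes of $f_P$. Since all $\beta_s\ge 0$ and $\sum_s\beta_s^2=1$ (so in particular $\beta_s\le 1$), I have $\beta_1^3\le\sum_s\beta_s^3\le\beta_1\sum_s\beta_s^2=\beta_1$. In case (a), $\beta_1\to 0$ forces $\sum_s\beta_s^3\to 0$, hence $f_P\to\infty$. In case (b), $\beta_1\to 1$ gives $\beta_1^3\to 1$ while the tail satisfies $\sum_{s\ge 2}\beta_s^3\le\sum_{s\ge 2}\beta_s^2=1-\beta_1^2\to 0$, so $\sum_s\beta_s^3\to 1$ and $f_P\to 1$. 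Thus the two conditions on $\beta_1$ translate directly into $f_P\to\infty$ and $f_P\to 1$, respectively.

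It then remains to read off the limit of the standardized (generally non-integer) chi-square $K_{f_P}=(\chi^2_{f_P}-f_P)/\sqrt{2f_P}$, which I would do through its moment generating function. Interpreting $\chi^2_{f_P}$ as the Gamma law with shape $f_P/2$, a direct computation gives, for $t<\sqrt{f_P/2}$,
\[
\log\E\!\left[e^{tK_{f_P}}\right]=-t\sqrt{\tfrac{f_P}{2}}-\frac{f_P}{2}\log\!\left(1-t\sqrt{\tfrac{2}{f_P}}\right).
\]
For case (a) I would expand the logarithm as $f_P\to\infty$; the two terms of order $\sqrt{f_P}$ cancel and the limit is $t^2/2$, the log-MGF of $\mathcal N(0,1)$. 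For case (b) I would let $f_P\to 1$ and obtain the MGF of $(\chi^2_1-1)/\sqrt 2$ by continuity of the above expression in $f_P$. Convergence of MGFs on a neighbourhood of the origin then yields convergence in distribution by the standard continuity theorem, which settles both parts.

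The argument is essentially a moment-matching computation and carries no deep obstacle; the only points requiring care are that $f_P$ must be treated as a continuously varying, generally non-integer degrees-of-freedom parameter (so $\chi^2_{f_P}$ has to be read as a Gamma law rather than a sum of squared normals), and that the eigenvalue identity $\tr((\vT\vV_N)^k)=\sum_s\lambda_s^k$ relies on the idempotency of $\vT$. I would also remark that case (c) is deliberately absent: there $\sum_s\beta_s^3\to\sum_s b_s^3\in(0,1)$ produces a finite limiting $f_P$, so the single-chi-square approximation cannot reproduce the genuinely infinite weighted-$\chi^2$ limit of \Cref{Theorem3}(c), which is precisely why only the two extreme regimes are claimed.
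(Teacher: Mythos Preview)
Your proof is correct and follows the same logical skeleton as the paper: first convert the condition on $\beta_1$ into a statement about $f_P$ (namely $\beta_1\to 0\Rightarrow f_P\to\infty$ and $\beta_1\to 1\Rightarrow f_P\to 1$), and then read off the limit of the standardized chi-square $K_{f_P}$. The paper establishes the first step via its Lemma~\ref{Bedingungen} (which in turn defers to Lemma~8.1 of \cite{Paper1}) and treats the second step by invoking Theorem~3.1 of \cite{Paper1}; by contrast you derive both steps explicitly. Your clean identity $f_P=(\sum_s\beta_s^3)^{-2}$ together with the sandwich $\beta_1^3\le\sum_s\beta_s^3\le\beta_1$ is exactly the kind of elementary argument behind the cited lemma, and your MGF computation for $K_{f_P}$ makes the standard normal and $(\chi_1^2-1)/\sqrt{2}$ limits transparent without external reference. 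So there is no genuine methodological difference, only that your write-up is self-contained where the paper relies on \cite{Paper1}. Your closing remark on why case~(c) is absent is also accurate and matches the paper's intent.
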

Thus, compared to the approximation with a fixed limit distribution, the $K_{f_P}$-approach would at least be asymptotically correct whenever 
$\beta_1\to \gamma \in \{0,1\}$ while always providing a three moment approximation to the test statistic. To apply this result, an estimator for $f$ in \eqref{eq: Kf} is needed. Since we have already found $A_4$ as unbiased and ratio-consistent estimator for $\tr(\left(\vT\vV_N\right)^2)$, 
it remains to find an adequate one for $\tr(\left(\vT\vV_N\right)^3)$. A combination of both will then lead to a proper estimator for ${f_P}$ and $\tau_P={f_P}^{-1}$, respectively. 
Again we prefer a direct estimation of the involved traces. To this end, we introduce normal random vectors
\[\vZ_{(\ell_1,\ell_2,\dots,\ell_{2a})}:=\left(\sqrt{\frac{N}{n_1}}\left(\vX_{1,\ell_1}-\vX_{1,\ell_2}\right)^\top\textbf{,}\dots\textbf{,}\sqrt{\frac{N}{n_a}}\left(\vX_{a,\ell_{2a-1}}-\vX_{a,\ell_{2a}}\right)^\top\right)^\top \]
with $1\leq \ell_{2i-1}\neq \ell_{2i}\leq n_i$ for all $i=1\dots,a$. Note, that this vectors are multivariat normal distributed with $\E(\vZ_{\left(\ell_1,\ell_2,\dots,\ell_{2a-1},\ell_{2a}\right)})=\vnull_{ad}$ and $\cov\left(\vZ_{\left(\ell_1,\ell_2,\dots,\ell_{2a-1},\ell_{2a}\right)}\right)=2\bigoplus_{i=1}^a\frac N{n_i} \vSigma_i = 2 \vV_N$. 
Utilizing their particular form, it is shown in the supplement, that a cyclic combination of these random vectors yield an unbiased estimator for $\tr(\left(\vT\vV_N\right)^3)$. In particular, writing 
$\vZ_{(\ell_1,\ell_2)}$ for $\vZ_{(\ell_1,\ell_2,\ell_1,\ell_2,\dots,\ell_{1},\ell_2)}$ we have
\bqan\label{eq: EV trtv3}
\E\left({\vZ_{(1,2)}}^\top 
 \vT
 \vZ_{(3,4)}
 {\vZ_{(3,4)}}^\top \vT \vZ_{(5,6)}
 {\vZ_{(5,6)}}^\top \vT \vZ_{(1,2)}\right)
 = 8 \tr(\left(\vT\vV_N\right)^3).
\eqan
This motivates the definition of (for $n_i\geq 6$)
\bqan\label{eq:C5}
C_{5}=\sum\limits_{\begin{footnotesize}\substack{\ell_{1,1},\dots, \ell_{6,1}=1\\
\ell_{1,1}\neq\dots\neq \ell_{6,1}}\end{footnotesize}
}^{n_1}\dots\sum\limits_{\begin{footnotesize}\substack{\ell_{1,a},\dots, \ell_{6,a}=1\\
\ell_{1,a}\neq\dots\neq \ell_{6,a}}\end{footnotesize}
}^{n_a}
\frac{\Lambda_{1}(\ell_{1,1},\dots,\ell_{6,a})\cdot \Lambda_{2}(\ell_{1,1},\dots,\ell_{6,a})\Cdot \Lambda_{3}(\ell_{1,1},\dots,\ell_{6,a})}{8\cdot \prod\limits_{i=1}^a\frac {  n_i! }{\left(n_i-6\right)!}},
\eqan
where
\[\Lambda_{1}(\ell_{1,1},\dots,\ell_{6,a})={\vZ_{(\ell_{1,1},\ell_{2,1},\dots,\ell_{1,a},\ell_{2,a})}}^\top \vT \vZ_{(\ell_{3,1},\ell_{4,1},\dots,\ell_{3,a},\ell_{4,a})},\]
\[\Lambda_{2}(\ell_{1,1},\dots,\ell_{6,a})={\vZ_{(\ell_{3,1},\ell_{4,1},\dots,\ell_{3,a},\ell_{4,a})}}^\top \vT \vZ_{(\ell_{5,1},\ell_{6,1},\dots,\ell_{5,a},\ell_{6,a})}, \]
\[\Lambda_{3}(\ell_{1,1},\dots,\ell_{6,a})={\vZ_{(\ell_{5,1},\ell_{6,1},\dots,\ell_{5,a},\ell_{6,a})}}^\top \vT \vZ_{(\ell_{1,1},\ell_{2,1},\dots,\ell_{1,a},\ell_{2,a})}.\]
Its properties together with a consistent estimator for $f_P$ are summarized below.
\vspace{-.5cm}
\begin{Le}\label{Lemma: fP Estimate}
 (a) The estimator $C_{5}$ given in \eqref{eq:C5} is unbiased for $\tr(\left(\vT\vV_N\right)^3)$.\\
 (b) Suppose that $a\in\N$ is fixed. Then $\widehat{\tau}_P := C_{5}^2/A_4^3$ is a consistent estimator for $\tau_P=1/f_P$ as \\ ${\min(d,n_1,\dots,n_d)\to\infty}$, i.e. we have convergence in probability
 \bqan\label{eq:consistency taup}
 \widehat{\tau}_P - \tau_P = \frac{C_{5}^2}{A_4^3} - \frac{\tr^2\left(\left(\vT\vV_N\right)^3\right)}{\tr^3\left(\left(\vT\vV_N\right)^2\right)} \stackrel{p}{\longrightarrow} 0.
 \eqan
 (c) Now suppose that $a\to \infty$ and that there exists some $p>1$ such that $\min(n_1,\dots,n_a)=\0\left(a^p\right)$. Then \eqref{eq:consistency taup} even holds under the asymptotic frameworks \eqref{eq: as frame 2} - \eqref{eq: as frame 3}.
\end{Le}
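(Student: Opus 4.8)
The plan is to read (a) as a Gaussian moment identity and (b)--(c) as ratio-consistency statements that, via the continuous mapping theorem, reduce to controlling the variance of $C_5$. For (a), fix one summand of $C_5$ indexed by a tuple $(\ell_{1,1},\dots,\ell_{6,a})$ whose six entries $\ell_{1,i},\dots,\ell_{6,i}$ are pairwise distinct within each group $i$. In every group the three stacked vectors $\vZ_{(\ell_{1,\cdot},\ell_{2,\cdot})}$, $\vZ_{(\ell_{3,\cdot},\ell_{4,\cdot})}$, $\vZ_{(\ell_{5,\cdot},\ell_{6,\cdot})}$ are then built from three differences of six distinct observations; as the $\vX_{i,j}$ are independent, these three vectors are mutually independent, each $\mathcal{N}_{ad}(\vnull,2\vV_N)$. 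Since the identity \eqref{eq: EV trtv3} uses only this independence and the common covariance $2\vV_N$, not the particular indices, every summand has expectation $8\,\tr((\vT\vV_N)^3)$. As all $\prod_i n_i!/(n_i-6)!$ summands share this value while $C_5$ divides their sum by $8\prod_i n_i!/(n_i-6)!$, unbiasedness $\E(C_5)=\tr((\vT\vV_N)^3)$ follows.

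For (b) and (c), write $t_k=\tr((\vT\vV_N)^k)=\sum_s\lambda_s^k$, where $\lambda_s\ge0$ are the eigenvalues of the positive semidefinite matrix $\vT\vV_N\vT$, so that $\tau_P=t_3^2/t_2^3$ and $\widehat{\tau}_P/\tau_P=(C_5/t_3)^2/(A_4/t_2)^3$. The power-mean inequality $(\sum_s\lambda_s^3)^2\le(\sum_s\lambda_s^2)^3$ gives $f_P=t_2^3/t_3^2\ge1$, hence $\tau_P\in(0,1]$ is bounded. By Lemma~\ref{Schae1} we already have $A_4/t_2\stackrel{p}{\longrightarrow}1$ in every framework, so the continuous mapping theorem applied to $(x,y)\mapsto x^2/y^3$ at $(1,1)$ shows that it suffices to prove the ratio-consistency $C_5/t_3\stackrel{p}{\longrightarrow}1$; boundedness of $\tau_P$ then upgrades $\widehat{\tau}_P/\tau_P\stackrel{p}{\longrightarrow}1$ to the desired $\widehat{\tau}_P-\tau_P\stackrel{p}{\longrightarrow}0$ of \eqref{eq:consistency taup}.

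Because $C_5$ is unbiased, Chebyshev's inequality reduces the remaining work to $\Var(C_5)/t_3^2\to0$. I would expand $\Var(C_5)=\E(C_5^2)-t_3^2$ as a double sum over two index tuples and sort the terms by their overlap pattern, i.e.\ by how many of the six selected indices coincide between the two tuples inside each group, exactly as in the one-sample bookkeeping of \cite{Paper1}. Pairs of tuples sharing no index select disjoint, hence independent, observations and reproduce $t_3^2$ up to a negligible normalization correction, so the variance is carried by pairs with at least one coincidence, each of which contributes an inverse factor $1/(n_i-c)$. Writing each $\Lambda_j$ through $\vT=\vT_S\otimes\vT_W$ as a double sum over groups and using Gaussianity of the differences, Isserlis' theorem factorizes every surviving moment into products of entries of $2\vV_N$; these regroup into weighted sums of trace products in the $\vT_S\vSigma_i$, with weights built from the $(\vT_W)_{ir}$ and the $N/n_i$. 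Each such product is bounded against $t_2$ and $t_3$ by Cauchy--Schwarz for the trace inner product, $\tr(\vT_S\vSigma_i\vT_S\vSigma_r)\le\tr((\vT_S\vSigma_i)^2)^{1/2}\tr((\vT_S\vSigma_r)^2)^{1/2}$, together with $\tr((\vT_S\vSigma_i)^k)\le\tr((\vT_S\vSigma_i)^2)^{k/2}$ for $k\ge2$.

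For fixed $a$ (framework \eqref{eq: as frame 1}) the coincidence factors yield $\Var(C_5)/t_3^2=\0(1/\min_i n_i)$ uniformly in $d$, which proves (b). The delicate point is (c): once $a\to\infty$, the number of cross-group trace products appearing in $\Var(C_5)$ grows with $a$, so the smallness gained from the $1/n_i$ factors must be balanced against this combinatorial growth. Verifying that, under $\min(n_1,\dots,n_a)=\0(a^p)$ with $p>1$, the relative variance still vanishes in frameworks \eqref{eq: as frame 2}--\eqref{eq: as frame 3} is where the growth condition is consumed, and I expect this group-indexed accounting of how the weighted trace products scale in $a$ relative to $t_3^2$ to be the main obstacle of the proof.
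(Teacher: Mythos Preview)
Your argument for (a) is fine and matches the paper.

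For (b)--(c) there is a genuine gap: you aim for ratio-consistency $C_5/t_3\stackrel{p}{\to}1$, i.e.\ $\Var(C_5)/t_3^2\to0$, and then assert that the overlap bookkeeping yields $\Var(C_5)/t_3^2=\mathcal{O}(1/\min_i n_i)$ uniformly in $d$. This is not what the moment computation delivers. If you diagonalize $\vV_N^{1/2}\vT\vV_N^{1/2}$ and expand, the second moment of a single summand (divided by $64$) equals $16t_6+4t_3^2+6t_2t_4+t_2^3$, so the variance of one summand is $16t_6+3t_3^2+6t_2t_4+t_2^3$. The term $t_2^3$ is irreducible and cannot be bounded by a constant multiple of $t_3^2$: in the toy case $a=1$, $\vT_S=\vI_d$, $\vSigma_1=\vI_d$ one has $t_2^3=d^3$ while $t_3^2=d^2$, so $\tau_P=1/d$. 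Carrying this through the overlap counting gives $\Var(C_5)/t_3^2\asymp d/n_1$, which diverges in the high-dimensional regime $d\gg n_1$. Hence $C_5/t_3\stackrel{p}{\to}1$ simply fails whenever $\tau_P\to0$ fast enough, which is precisely the regime $\beta_1\to0$ that the lemma must cover.

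The fix, and what the paper does, is to normalize by $t_2^{3/2}$ instead of $t_3$. The variance bound one actually obtains is $\Var(C_5)\le c_N\cdot 27\,t_2^3$ with $c_N=1-\prod_i\binom{n_i-6}{6}/\binom{n_i}{6}$, so Chebyshev gives the \emph{additive} consistency $C_5/t_2^{3/2}-t_3/t_2^{3/2}\stackrel{p}{\to}0$. Since $|t_3/t_2^{3/2}|=\sqrt{\tau_P}\le1$, the factorization $C_5^2/t_2^3-\tau_P=\bigl(C_5/t_2^{3/2}-t_3/t_2^{3/2}\bigr)\bigl(C_5/t_2^{3/2}+t_3/t_2^{3/2}\bigr)$ yields $C_5^2/t_2^3-\tau_P\stackrel{p}{\to}0$, and replacing $t_2$ by $A_4$ via \Cref{Schae1} finishes (b). The growth condition in (c) is exactly what forces $c_N\to0$ when $a\to\infty$; your intuition about where it is consumed is right, but it acts on $\Var(C_5)/t_2^3$, not on $\Var(C_5)/t_3^2$.
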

\begin{theorem}\label{theo: fP}
 Suppose \eqref{eq:consistency taup}. Then,  \Cref{Theorem5} remains valid if we replace $f_P$ by its estimator $\widehat{f}_P = 1/\widehat{\tau}_P$.
\vspace{-.5cm}
\end{theorem}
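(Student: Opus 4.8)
The plan is to read $K_{\widehat f_P}$ as a standardized $\chi^2$ reference distribution whose number of degrees of freedom $\widehat f_P = 1/\widehat\tau_P$ is itself estimated from the data, and to show that its (conditional) law converges to the asserted limit. Write $F_f(x) = P\big((\chi^2_f - f)/\sqrt{2f} \le x\big)$ for the CDF of the standardized $\chi^2_f$-distribution with $f>0$ degrees of freedom, so that the conditional CDF of $K_{\widehat f_P}$ given the data is $F_{\widehat f_P}(x)$. It suffices to prove that, for every fixed $x$, $F_{\widehat f_P}(x)$ converges in probability to $\Phi(x)$ in case a) and to $F_1(x)$ in case b). Indeed, $F_{\widehat f_P}(x) \in [0,1]$, so bounded convergence then yields convergence of the unconditional CDF $\E[F_{\widehat f_P}(x)]$ to the target, i.e. $K_{\widehat f_P}$ converges in distribution to the respective limit; continuity and strict monotonicity of the limiting CDFs moreover give convergence of the data-dependent quantiles, which is what makes the resulting test asymptotically valid.

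First I would translate the eigenvalue conditions of \Cref{Theorem5} into conditions on the degrees of freedom. Recall from the proof of that theorem that, writing $\beta_s = \lambda_s/\sqrt{\sum_\ell \lambda_\ell^2}$ with $\lambda_s \ge 0$ the eigenvalues of $\vT\vV_N\vT$, one has $f_P = \big(\sum_s \beta_s^3\big)^{-2}$. Since $\sum_s \beta_s^2 = 1$ and the $\beta_s$ are nonnegative and decreasing, $\sum_s \beta_s^3 \le \beta_1$, whence $\beta_1 \to 0$ forces $f_P \to \infty$, i.e. $\tau_P \to 0$; and $\beta_1 \to 1$ forces $\sum_s\beta_s^3 \to 1$, i.e. $f_P \to 1$ and $\tau_P \to 1$. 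Combining this with the assumed consistency \eqref{eq:consistency taup}, $\widehat\tau_P - \tau_P \stackrel{p}{\longrightarrow} 0$, gives $\widehat\tau_P \stackrel{p}{\longrightarrow} 0$ in case a) and $\widehat\tau_P \stackrel{p}{\longrightarrow} 1$ in case b). Using $\widehat\tau_P = C_5^2/A_4^3 \ge 0$ together with the ratio-consistency of $A_4$ for the positive quantity $\tr((\vT\vV_N)^2)$, this translates into $\widehat f_P = 1/\widehat\tau_P \stackrel{p}{\longrightarrow} \infty$ in case a) and $\widehat f_P \stackrel{p}{\longrightarrow} 1$ in case b).

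Case b) would then follow from the continuous mapping theorem. For fixed $x$ the map $f \mapsto F_f(x) = G_f\big(\sqrt{2f}\,x + f\big)$, where $G_f$ denotes the $\chi^2_f$ (gamma) CDF, is continuous at $f=1$ because $G_f(y)$ is jointly continuous in $(f,y)$ and $\sqrt{2f}\,x+f$ is continuous in $f$; hence $\widehat f_P \stackrel{p}{\longrightarrow} 1$ yields $F_{\widehat f_P}(x) \stackrel{p}{\longrightarrow} F_1(x)$, and $F_1$ is precisely the CDF of $(\chi^2_1-1)/\sqrt2$. Case a) cannot invoke continuous mapping directly, since the limit is $+\infty$; here I would use a threshold argument. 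For fixed $x$ we have $F_f(x) \to \Phi(x)$ as $f\to\infty$ (the usual CLT for the standardized $\chi^2_f$/gamma), so given $\eps>0$ there is $M$ with $|F_f(x)-\Phi(x)| \le \eps$ for all $f \ge M$; consequently $P\big(|F_{\widehat f_P}(x)-\Phi(x)| > \eps\big) \le P(\widehat f_P < M) \to 0$, giving $F_{\widehat f_P}(x) \stackrel{p}{\longrightarrow} \Phi(x)$.

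I expect the main obstacle to be the careful handling of the random, possibly diverging degrees of freedom $\widehat f_P$: the estimator enters the reference law nonlinearly through $f \mapsto F_f$, and in case a) the target $f_P \to \infty$ precludes a direct continuity argument, forcing the passage from the additive consistency of $\widehat\tau_P$ to the reciprocal $\widehat f_P$ and then to $F_{\widehat f_P}$ via the tail control above (where the nonnegativity of $\widehat\tau_P$ is needed to rule out spurious small values). Everything else — the identity $f_P = (\sum_s\beta_s^3)^{-2}$, the joint continuity of the gamma CDF, and the CLT for the standardized $\chi^2_f$ — is standard and already implicit in the proof of \Cref{Theorem5}.
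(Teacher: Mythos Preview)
Your proof is correct and follows the same strategy as the paper: both reduce to showing $\widehat f_P \stackrel{p}{\to} 1$ in case~b) and $\widehat f_P \stackrel{p}{\to} \infty$ in case~a) via the consistency \eqref{eq:consistency taup} together with the equivalences $\beta_1\to 1 \Leftrightarrow \tau_P\to 1$ and $\beta_1\to 0 \Leftrightarrow \tau_P\to 0$, and then conclude the asserted limits for $K_{\widehat f_P}$. The paper dispatches this last step by citing \cite{Paper1}, Theorem~3.1, whereas you unpack it directly via the CDF map $f\mapsto F_f(x)$ (continuous mapping at $f=1$, a threshold argument for $f\to\infty$) --- a more self-contained but otherwise equivalent argument.
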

\begin{re}
 (a) Using similar arguments as in the proof of Lemma~8.1. of \cite{Paper1} we obtain the equivalences
 $\beta_1 \to 0 \Leftrightarrow \tau_P\to 0$ and $\beta_1 \to 1 \Leftrightarrow \tau_P\to 1$. Thus, $\widehat{\tau}_P$ can also be used as check criterion for these two cases. \\
 (b) It is also possible to derive a consistent estimator for $\tau_{CQ} = {\tr(\left(\vT\vV_N\right)^4)}/{\tr^2(\left(\vT\vV_N\right)^2)}$ $ {= 1/f_{CQ}}$, a key quantity in \cite{bib8a}, see the supplement for details concerning the estimator. The corresponding approximation by the sequence $K_{f_{CQ}}$ even shares the same asymptotic properties of the Pearson approximation \eqref{eq: Kf} stated in \Cref{Theorem5} and \Cref{theo: fP}. However, it only provides a two moment approximation which turned out to perform worse in simulations (results not shown).\\
 (c) In the supplement, we additionally present an unbiased estimator $C_7$ for $\tr(\left(\vT\vV_N\right)^3)$ such that $C_7^2/A_4^3$ is consistent for $\tau_P$ in all 
 asymptotic frameworks \eqref{eq: as frame 1} - \eqref{eq: as frame 3}. Particularly, the extra condition $\min(n_1,\dots,n_a)=\0\left(a^p\right)$ is not needed. 
 However, it is computationally more expensive compared to $C_5$ and thus omitted here.
\vspace{-.5cm}
 \end{re}
In practical applications, the computation costs for $C_5$ are nevertheless rather high. This leads to disproportional waiting times for $p$-values of the 
corresponding approximate test $\varphi_N = \mathbf{1}\{W_N > K_{\widehat{f}_P; 1-\alpha}\}$, where the critical value is given as $(1-\alpha)$-quantile of $K_{\widehat{f}_P}$. Therefore, we propose a certain subsampling-type method. Since the unbiasedness of $C_5$ clearly stems from \eqref{eq: EV trtv3} it seems reasonable to proceed as follows: 
For each $i=1,\dots,a$ and $b=1,\dots,B$ we independently draw random subsamples $\{\sigma_{1i}(b),\dots,\sigma_{6i}(b)\}$ of length $6$ from $\{1,\dots,n_i\}$ and store them in a joint random vector $\vsigma(b) = (\sigma_{11}(b),\dots,\sigma_{6a}(b))$. Then, a subsampling-version of the estimator $C_5$ is given by
\[{C_5^\star} = {C_5^\star}\left(B\right)=\frac 1 {8 \cdot B}\sum\limits_{b=1}^B 
 \Lambda_1(\vsigma(b)) \cdot \Lambda_2(\vsigma(b)) \cdot \Lambda_3(\vsigma(b)). 
\]
Letting $B=B(N)\to\infty$ as $N\to \infty$ it is easy to see (see the supplement for details), that ${C_5^\star}$ has the same asymptotic properties as ${C_5}$. 
In particular, it is stated in the supplement that $\widehat{\tau}_P^\star := 1/\hat{f}_P^\star := C_5^{\star 2}/A_4^3$ is a consistent estimator for $\tau_P$ and that the approximation $K_{\hat{f}_P^\star}$ has the same weak limits as $K_{\hat{f}_P}$ stated in \Cref{theo: fP}. 
This leads to $\varphi_N^\star = \mathbf{1}\{W_N > K_{\hat{f}^\star_P; 1-\alpha}\}$ which is an asymptotically exact test whenever $\beta_1\to \gamma\in\{0,1\}$.
The finite sample, dimension and group size performance of this approximation are investigated in the subsequent section.


\section{Simulations}\label{Simulations}
In the previous sections we considered the asymptotic properties of the proposed inference methods which are valid for large sample and fixed or possibly large dimension and/or group sizes.
Here we investigate the small sample properties of our proposed approximation procedure \\${\varphi_N^\star = \mathbf{1}\{W_N > K_{\hat{f}^\star_P; 1-\alpha}\}}$ in comparison to the statistical tests $\psi_z$ and $\psi_\chi$ based on fixed critical values. In particular, we compare these procedures in simulation studies with respect to
\begin{itemize}
 \item[(a)] their type I error rate control under the null hypothesis (Section~\ref{Asymptotic distribution}) and
 \item[(b)] their power behaviour under various alternatives (Section~\ref{seq:power}).
\end{itemize}
All simulations were performed with the help of the R computing environment (R Development Core Team, 2013), each with $n_{sim}=10^4$ simulation runs.

\subsection{Asymptotic distribution and Type I error control}\label{Asymptotic distribution}

First we study the speed of convergence, i.e. type I error control, of the three different tests under the null hypothesis. 
To be in line with the simulation results presented in \cite{Paper1} for the case $a=1$ we also multiplied the statistic $W_N$ by $\sqrt{{N}/{(N-1)}}$ to avoid a slightly liberal behaviour.

Due to the abundance of different split-plot designs and the more methodological focus of the paper, we restrict our simulation study to two specific null hypotheses and a high dimensional and heteroscedastic two-sample setting. 
In particular, we investigate the type-I-error behaviour of all three tests for the null hypotheses 
\begin{itemize}
 \item ${H_0^a: \left(\vP_2\otimes \frac{1}{d}\vJ_d\right)\vmu=\vnull}$ and
 \item ${H_0^b: \left(\frac 1 2\vJ_2\otimes \vP_d\right)\vmu=\vnull}$.
 \end{itemize}
In both cases sample sizes were chosen from $n_1\in\{10,20,50\}$ and $n_2\in\{15,30,75\}$ combined with various choices of dimensions $d\in\{5,10,20,40,70,100,150,200,$ $300,450,600,800\}$. For the covariance  matrices a heteroscedastic setting with  autoregressive structures  $\left(\vSigma_1\right)_{i,j}=0.6^{|i-j|}$ and  $\left(\vSigma_2\right)_{i,j}=0.65^{|i-j|}$ was chosen and for each simulation run $B(N)= 500\Cdot N,$ $N=n_1+n_2,$ subsamples were drawn.

Note that these settings imply $\beta_1\to 1$ for $H_0^a$ and $\beta_1\to 0$ for $H_0^b$, see the supplement for details.

\begin{figure}[H]
\setlength{\abovecaptionskip}{5pt} 
\setlength{\belowcaptionskip}{1pt} 
\begin{minipage}[t]{0.49\textwidth}\vspace{0pt} 
\includegraphics[width=\textwidth]{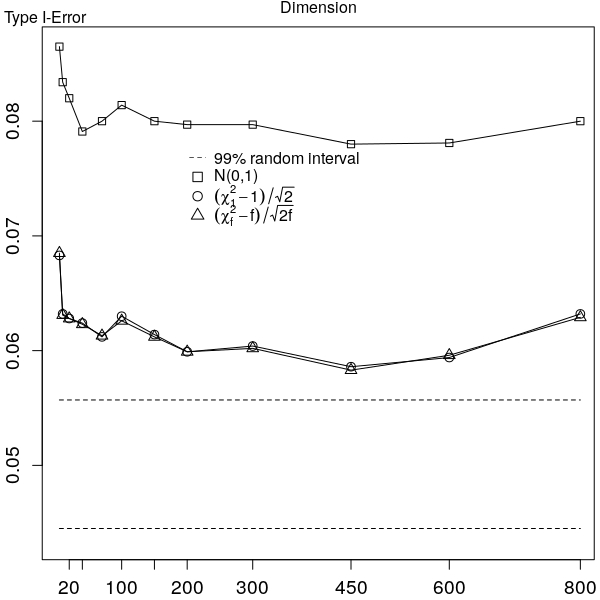} 
\end{minipage}\hfill%
\begin{minipage}[t]{0.49\textwidth}\vspace{0pt} 
\includegraphics[width=\textwidth]{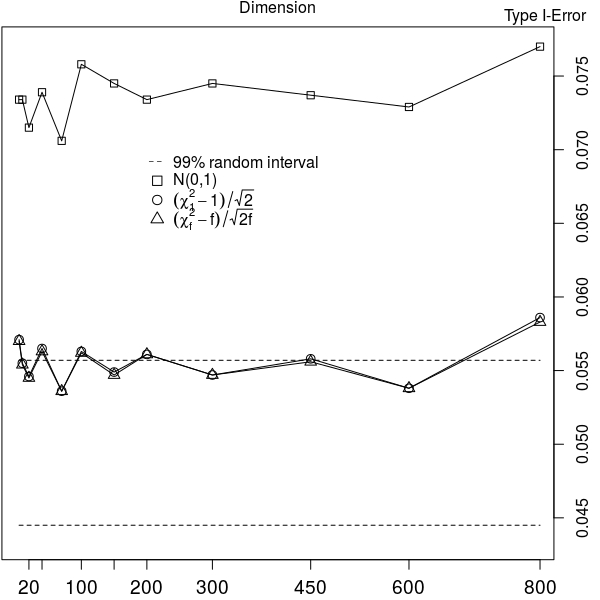} 
\vspace{-1.0cm}\end{minipage} \\
\begin{minipage}[t]{0.49\textwidth}\vspace{0pt} 
\includegraphics[width=\textwidth]{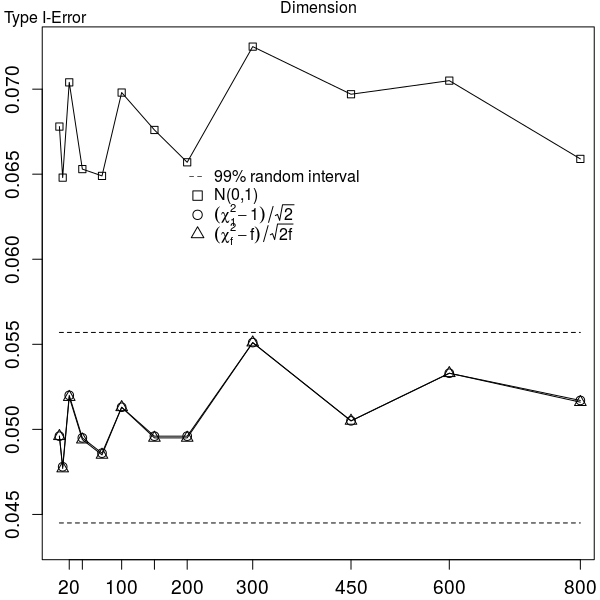} 
\end{minipage} 

\caption{\small Simulated Type I-Error rates ($\alpha=5\%$) for the statistic $W_N\cdot\sqrt{N/(N-1)}$ compared with the critical values of a standard normal, standardized $\chi_1^2$ and $K_f$-distribution under the null hypothesis $H_0^a: \left(\vP_2\otimes \frac{1}{d}\vJ_d\right)\vmu=\vnull$ for increasing dimension. 
 The sample sizes are increased from left ($n_1=10, n_2=15$) to right ($n_1=20, n_2=30$) to bottom ($n_1=50, n_2=75$).} 
\label{BildSimulation1} 
\end{figure} 

Thus, $\varphi_N^\star$ is asymptotically exact in both cases while $\psi_\chi$ and $\psi_z$ posses the asymptotic behaviour given in Table~1. In particular, the $z$-test $\psi_z$ should be rather liberal for testing for $H_0^a$ and $\psi_\chi$ strongly conservative for $H_0^b$. All these theoretical findings can be recovered in our simulations:
The results for $H_0^a$, displayed in Figure~\ref{BildSimulation1}, show an inflated type I error level control of $\psi_z$ around $8\%$ for smaller samples sizes ($N=25$). For larger sample sizes ($N=125$) it stabilizes in the region of its asymptotic level of $6.8\% \pm 0.2\%$. Moreover, the error control is only slightly effected by the varying dimensions under investigation. In comparison, the two asymptotically correct tests $\varphi_N^\star$ and $\psi_\chi$ are slightly liberal for smaller sample sizes and more or less asymptotically correct for moderate ($N=50$) to larger sample sizes. Here, it is astonishing that both procedures are nearly superposable, suggesting a fast convergence of the degrees of freedom estimator $\widehat{f_P}$.


\begin{figure}[H] 

\begin{minipage}[t]{0.49\textwidth}\vspace{0pt} 
\includegraphics[width=\textwidth]{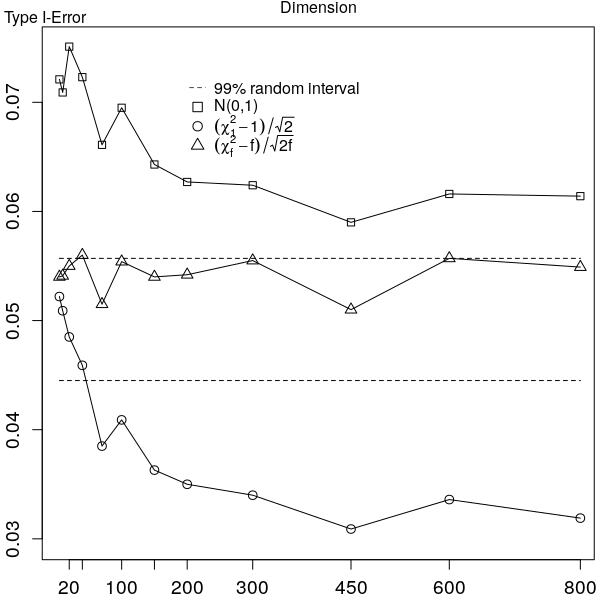} 
\end{minipage}\hfill%
\begin{minipage}[t]{0.49\textwidth}\vspace{0pt} 
\includegraphics[width=\textwidth]{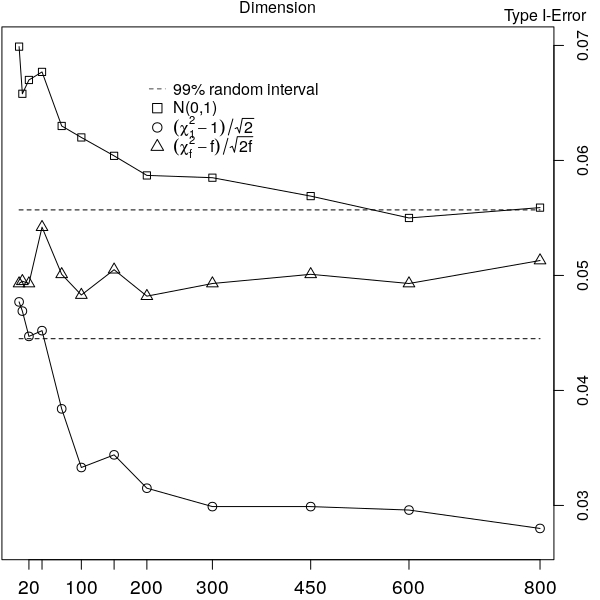} 
\vspace{-2.5cm}\end{minipage} \\
\begin{minipage}[t]{0.49\textwidth}\vspace{0pt} 
\includegraphics[width=\textwidth]{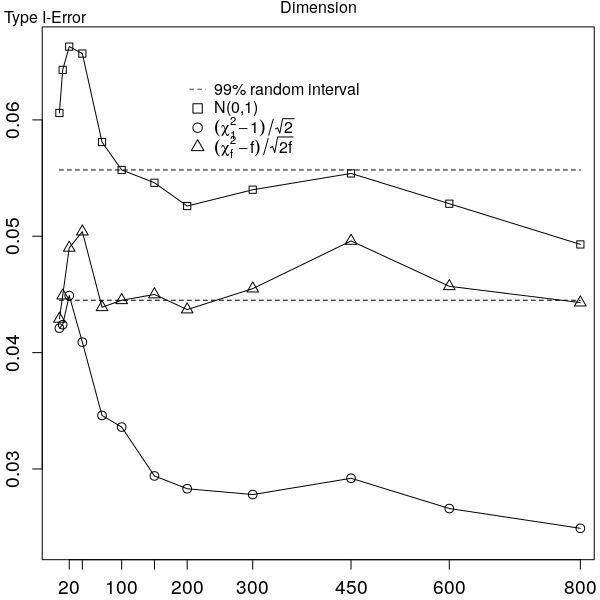} 
\end{minipage} 
\setlength{\abovecaptionskip}{5pt} 
\setlength{\belowcaptionskip}{0pt} 
\caption{\small Simulated Type I-Error rates ($\alpha=5\%$) for the statistic $W_N\cdot\sqrt{N/(N-1)}$ compared with the critical values of a standard normal, standardized $\chi_1^2$ and $K_f$-distribution under the null hypothesis $H_0^b:\left(\frac 1 2\vJ_2\otimes \vP_d\right)\vmu=\vnull$ for increasing dimension. 
The sample sizes are increased from left ($n_1=10, n_2=15$) to right ($n_1=20, n_2=30$) to bottom ($n_1=50, n_2=75$).}  
\label{BildSimulation2} 
\end{figure}

The results for $H_0^b$, presented in Figure~\ref{BildSimulation2}, are slightly different. In particular, both the tests $\psi_\chi$ and $\psi_z$ depending on fixed critical values are more effected by the underlying dimension: For smaller $d<100$ the true level is considerably larger than their asymptotic level given in Table~1; resulting in a rather liberal behaviour of $\psi_z$ and close to exact type I error control for $\psi_\chi$. This effect is decreased with increasing sample sizes. Moreover, for 
larger dimension ($d\geq 200$) both tests approach their asymptotic level. In comparison, the procedure $\varphi_N^\star$ based on the $K_{\hat{f}^\star}$ approximation shows a fairly good $\alpha$ level control through all dimension and sample size settings. Making this the method of choice. 

\subsection{Power Performance}\label{seq:power}
We examined the power of the three procedures. 
Again a  heteroscedastic two group split-plot design with autoregressive covariance structures ( $\left(\vSigma_1\right)_{i,j}=0.6^{|i-j|}$ and  $\left(\vSigma_2\right)_{i,j}=0.65^{|i-j|}$) was selected. 
The alpha level ($5\%$) and the null hypotheses were chosen as above ($H_0^a: \left(\vP_2\otimes \frac{1}{d}\vJ_d\right)\vmu=\vnull$ and
$H_0^b: \left(\frac 1 2\vJ_2\otimes \vP_d\right)\vmu=\vnull$). The investigated alternatives were 
\bit
\setlength\itemsep{0.3em}\setlength\parskip{0em}
\item a trend alternative for both hypotheses with $\vmu_2=\vnull_d$ and $ \mu_{1,t}=t\cdot \delta/d, t\in \N_d$ for $\delta\in[0,3]$ and additionally
\item a shift alternative for $H_0^a$ with $\vmu_2=\vnull_d$ and $\vmu_1=\textbf{1}\cdot \delta$ and
\item a one-point alternative  for $H_0^b$, with $\vmu_2=\vnull_d$ and $\vmu_1=\ve_{1}\cdot \delta$,
\eit
each with increased $\delta\in[0,3]$. We only considered the moderate sample size setting with $n_1=20$ and $n_2=30$ together with three choices of dimensions $d=\{10,40,100\}$. The results can be found in Figures~\ref{BildPowerTAB} and \ref{BildPowerSBOPA}.


\captionsetup[subfloat]{labelformat=empty}
\captionsetup[subfloat]{justification=RaggedRight}
\captionsetup[subfloat]{nearskip=0.3cm}
\captionsetup[subfloat]{farskip=0.2cm}
\captionsetup[subfloat]{margin=15pt}

\begin{figure}[H]
    \subfloat[ Power curves for a trend alternative and ${H_0^a: \left(\vP_2\otimes \frac{1}{d}\vJ_d\right)\vmu=\vnull}$.]
    {\includegraphics[width=0.5\textwidth]{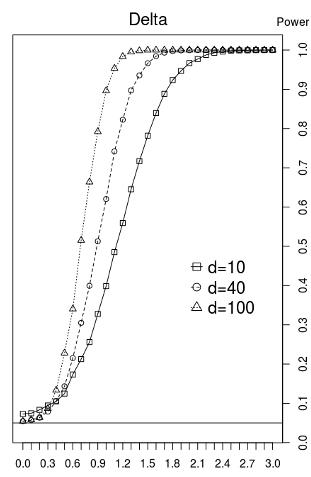}} 
    \subfloat[ Power curves for a trend alternative and ${H_0^b: \left(\frac 1 2\vJ_2\otimes \vP_d\right)\vmu=\vnull}$.]{\includegraphics[width=0.5\textwidth]{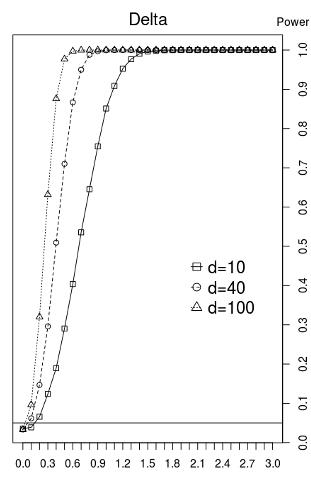}} 
\caption{ \small
Simulated power curves for the Statistic $W_{N}\cdot \sqrt{(N-1)/N}$ in $10^4$ 
simulation runs for different dimensions with $n_1=20, n_2=30$ and an autoregressive structure( $\left(\vSigma_1\right)_{i,j}=0.6^{|i-j|}$ and  $\left(\vSigma_2\right)_{i,j}=0.65^{|i-j|}$).} 
\label{BildPowerTAB} 
\end{figure}

It can be readily seen that the power depends on the type of alternative: For the trend (Figure~\ref{BildPowerTAB}) and the shift alternative (left panel of Figure~\ref{BildPowerSBOPA}) the power gets larger with increasing dimension.   This is essentially apparent for the shift alternative, where the power increases considerably from $d=10$ to $d=40$. Contrary, for the one-point alternative the power becomes smaller for higher dimensions $d$ (right panel of \Cref{BildPowerSBOPA}). However, this is as expected since a difference in one single component can be detected more easily for smaller $d$. 

\begin{figure}[H]
    \subfloat[ Power curves for a shift alternative and ${H_0^a: \left(\vP_2\otimes \frac{1}{d}\vJ_d\right)\vmu=\vnull}$.]{\includegraphics[width=0.5\textwidth]{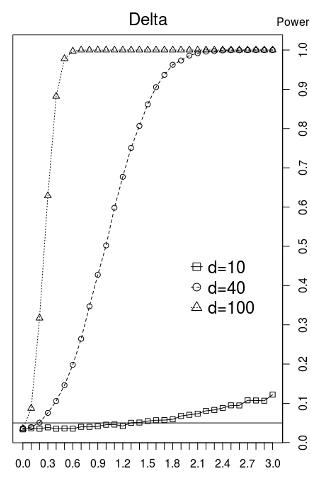}}
    \subfloat[ Power curves for an one-point alternative  and ${H_0^b: \left(\frac 1 2\vJ_2\otimes \vP_d\right)\vmu=\vnull}$.]{\includegraphics[width=0.5\textwidth]{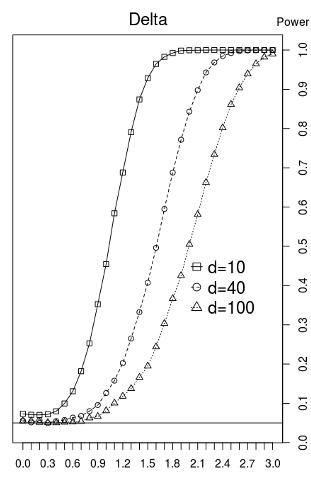}} 
\caption{ \small
Simulated power curves for the Statistic $W_{N}\cdot \sqrt{(N-1)/N}$ in $10^4$ 
simulation runs for different dimensions with $n_1=20, n_2=30$ and an autoregressive structure( $\left(\vSigma_1\right)_{i,j}=0.6^{|i-j|}$ and  $\left(\vSigma_2\right)_{i,j}=0.65^{|i-j|}$).} 
\label{BildPowerSBOPA} 
\end{figure}

 \section{Analysis of a sleep laboratory data set}\label{Analysis of the Data sets}
Finally, the new methods are exemplified on the sleep laboratory trial reported in \cite{Daten}. 
In this two-armed repeated measures trial, the activity of prostaglandin-D-synthase ($\beta$-trace) was measured every 4 hours over a period of 4 days. 
The grouping factor was gender and the above $d=24$ repeated measures were observed on $n_i=10$ young healthy men (group $i=2$) and women (group $i=1$). 
Since each day presented a certain sleep condition the repeated measures are structured by two crossed fixed factors:
\begin{itemize}
\setlength\itemsep{0.3em}\setlength\parskip{0em}
 \item intervention (with $4$ levels: normal sleep, sleep deprivation, recovery sleep and REM sleep deprivation) and 
 \item time (with the $6$ levels/time points $24h, 4h, 8h, 12h, 16h$ and $20h$). 
\end{itemize}
Due to $d>n_i$ we are thus dealing with a high-dimensional split-plot design with  $a=2$ groups and $d=24$ repeated measures. 
The time profiles of each subject are displayed in \Cref{fig:Schlaffrauen} (for the female group $1$) and \Cref{fig:Schlafmaenner} (for the male group $2$). 
We note, that group-specific profile analysis could already be performed by the methods given in \cite{Paper1}. In particular, they found a significant intervention and a borderline time effect for the male group. For the current two-sample design additional questions concern (1) whether there is a gender effect, i.e. the time profiles of the groups differ, 
and if so (2) whether they differ with respect to certain interventions. 
\begin{figure}[h]
    \centering
\includegraphics[width=\textwidth]{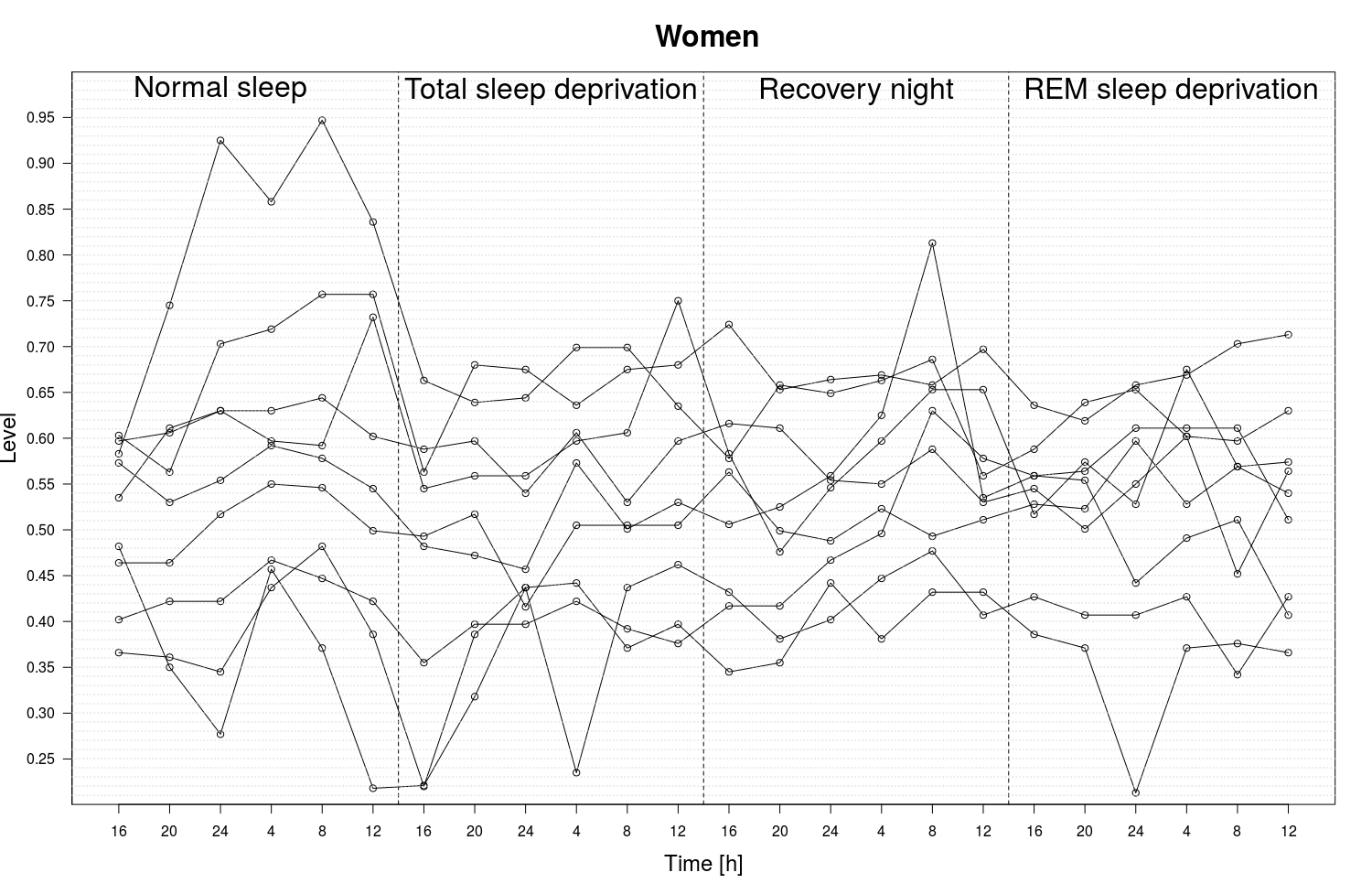}
\caption{\small Prostaglandin-D-synthase (ß-trace) of 10 young women during 4 days  under different sleep conditions.}
\label{fig:Schlaffrauen}
\end{figure}
\begin{figure}[h]
    \centering
\includegraphics[width=\textwidth]{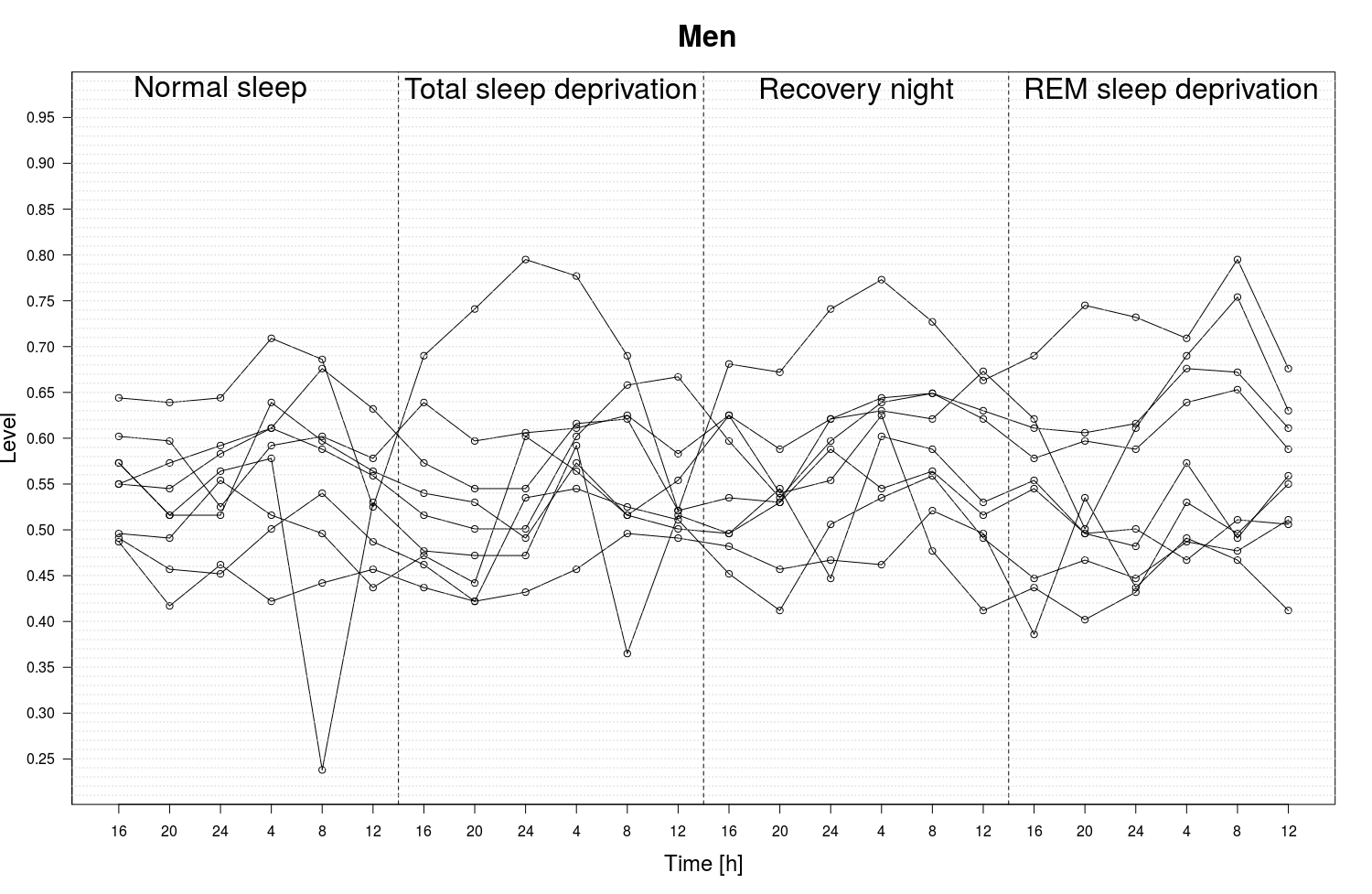}
\caption{\small Prostaglandin-D-synthase (ß-trace) of 10 young men during 4 days under different sleep conditions.}
\label{fig:Schlafmaenner}
\end{figure}
Moreover, investigations regarding (3) a general effect of time and (4) interactions between the different factors are of equal interest. Utilizing the notation from \Cref{ Statistical Model and the Hypotheses}, the corresponding null hypotheses can be formalized via adequate contrast matrices. 
In particular, we are interested in testing the null hypotheses
{
\bit
\item[(a)] No gender effect: $H_0^a: \left(\vP_2\otimes \frac{1}{24}\vJ_{24}\right)\vmu=\vnull,$
\item[(b)] No time effect: $H_0^b: \left(\frac 1 {2}\vJ_2\otimes \vP_{24}\right)\vmu=\vnull$,
\item[(c)] No interaction effect between time and group: 
$H_0^{ab}: \left(\vP_2\otimes \vP_{24}\right)\vmu=\vnull$,
\item[(d)] No time effect for intervention $\ell$,  $\ell\in \{1,\dots,4\}$:\\ $H_0^{t \ell}:\left(\vP_{2}\otimes \left(\left(\ve_l\cdot \ve_l^\top\right) \otimes \vP_6\right)\right)\vmu=\vnull$,
\item[(e)] No effect between interventions $\ell$ and $k$, $\ell, k\in\{1,\dots,4\}$:\\
$H_0^{\ell \times k}:\left(\vP_{2}\otimes \left(\left(\ve_\ell\cdot \ve_\ell^\top-\ve_\ell\cdot \ve_k^\top\right) \otimes \frac{1}{6}\vJ_6\right)\right)\vmu=\vnull$,
\eit where $e_\ell = (\delta_{\ell j})_j$ denotes the  Kronecker delta.} Applying the test $\varphi_N^\star$ based on the standardized quadratic form $W_N$ as test statistic and 
the proposed $K_{\hat{f}_P^\star}$-approximation with $B=50000\cdot N = 100,000$ subsamples we obtain the results summarized in Table~\ref{tab:Dataanalysis} . 

 \renewcommand{\arraystretch}{1.1}
\begin{table}[h]
\normalsize
 \caption{\small Analysis of the sleep lab trial from Figures~\ref{fig:Schlaffrauen}-\ref{fig:Schlafmaenner}: Shown are the values of the test statistic $W_N$ and the 
 estimator $\hat{f}_p^\star$ as well as the $p$-values of the test $\varphi_N^\star =\mathbf{1}\{W_N > K_{\hat{f}^\star_P; 1-\alpha}\}$ for different null hypotheses of interest.}
 \bcen
 \begin{tabular}{|c|c|c|c|c|}
\hline

 Hypothesis&$W_{N}^A$& $\hat{f}_p^\star$&p-value\\
 \hline
$H_0^a$& -0.45671& 1.19030&0.55832\\\hline
$H_0^b$&6.24114&7.07832&\textbf{0.00008}\\\hline
$H_0^{ab}$&0.74578&7.21217&0.20120\\ \hline
$ H_0^{t1}$&-0.795083&461.874&0.784463 \\\hline
 $ H_0^{t2}$&-0.591851&360.048&0.71764\\\hline
 $ H_0^{t3}$&-0.43381&223.24000&0.65845\\\hline
 $ H_0^{t4}$&-1.18382&426.083&0.88385\\\hline
 $ H_0^{{1\times 2}}$&2.37921 &155.89025 &\textbf{0.01285}\\\hline
 $ H_0^{{1\times 3}}$& 0.23757& 156.64141&   0.39240\\\hline
 $ H_0^{{1\times 4}}$&--0.49984& 143.57718 &  0.68099\\\hline
 $ H_0^{{2\times 3}}$&-0.72716& 91.83337 & 0.75968\\\hline
 $ H_0^{{2\times 4}}$&-0.56510 &79.78169 & 0.70183\\\hline
 $ H_0^{{3\times 4}}$&-0.66704& 130.56430 &  0.74046\\\hline
 \end{tabular} \label{tab:Dataanalysis}
 \ecen
 \end{table} 
 \renewcommand{\arraystretch}{1}
 
\normalsize
There it can be readily seen that most hypotheses cannot be rejected at level $\alpha=5\%$. In particular, there is no evidence for an overall gender effect, so that we have not performed post-hoc analyses on the interventions. Only a highly significant time effect, as well as a significant effect between the first two interventions (normal sleep and sleep deprivation), could be detected. However, applying a multiplicity adjustment (Bonferroni or Holm) only the time effect remained significant. 

 


\section{Conclusion \& Outlook}\label{Conclusion & Outlook}

In this paper we have investigated inference procedures for general split-plot models, allowing for unbalanced and/or heteroscedastic covariance settings as well as a factorial structure on the whole- and sub-plot factors. Inspired by the work of \cite{Paper1} for one group repeated measures designs the test statistics were based on standardized quadratic forms. However, different to their work novel symmetrized $U$-statistics were introduced to adequately handle the problem of additional nuisance parameters in the multiple sample case.

To jointly cover low and highdimensional models as well as situations with a small or large number of groups we conducted an 
in-depth study of their asymptotic behaviour under a unified asymptotic framework. In particular, the number of groups $a$ and dimensions $d$ may be fixed as in classical asymptotic settings, or even converge to infinity. Here we do neither postulate any assumptions on how $d$ and/or $a$ and the underlying sample sizes converge to infinity nor any sparsity conditions on the covariance structures since such assumptions are usually hard to check for a practical data set at hand. 
As a consequence it turned out that the test statistic posses a whole continuum of asymptotic limits that depend on the eigenvalues of the underlying covariances. 
We thus argued that an approximation by a fixed critical value is not adequate and proposed an approximation by a sequence of standardized $\chi^2$-distributions with estimated degrees of freedom. For computational efficiency we additionally provided a subsampling-type version of the degrees of freedom estimator. Our approach provides a reasonably good three moment approximation of the test statistic and is even asymptotically exact if the influence of the largest eigenvalue is negligible (leading to a standard normal limit) or decisive (leading to a standardized $\chi_1^2$ limit). 

Apart from these asymptotic considerations we evaluated the finite sample and dimension performance of our approximation technique. 
In particular, for varying combinations of sample sizes and dimensions, we compared its power and type I error control with test procedures based on fixed critical values. In all designs it showed a quite accurate error control over all low- ($d\leq 10$) to highdimensional situations (with up to $d=800$). In comparison, its performance was considerably better than that of the other two tests which partially disclosed a rather liberal or conservative behaviour. 


In future research we like to extend the current results to general highdimensional MANOVA designs, where we also like to relax the involved assumption of multivariate normality and/or even test simultaneously for mean and covariance effects as recently proposed in \cite{liu2017simultaneous}. These investigations, however, require completely different (e.g., martingale) techniques and estimators of the involved traces. Moreover, we also plan to conduct more detailed simulations (especially for larger group sizes $a$ and other covariance matrices) in a more applied paper.

\section*{Acknowledgement} The authors would like to thank Edgar Brunner for helpful discussions. This work was supported by the
German Research Foundation project DFG-PA 2409/4-1.\newpage

{\begin{center}
{\large Supplementary Material to \\[1ex]

{\bf 'Inference For High-Dimensional Split-Plot-Designs:\\ A Unified Approach for Small to Large Numbers of Factor Levels'}}\\[1ex]
Paavo Sattler$^1$ and Markus Pauly$^1$ \\
{\tiny ${}^1$University of Ulm, Institute of Statistics}
 \end{center}
}

{\bf Abstract.} In this supplement we present all theoretical derivations and computations that were omitted in the paper for lucidity.

\newpage

\appendix
\section{Appendix}

We start with some preliminary results and Lemmatas.

\subsection{Basics} 
In Section~2 of the main paper we claimed that the unique projection matrix $\vT$ to the hypothesis matrix $\vH=\vH_S\otimes \vH_W$ 
that equivalently describes the null is given by the product of two projection matrices $\vT_S\otimes \vT_W$. We start with the proof of this claim:

\begin{LeA}\label{ginverse}
%
%
%
%
%


Let be $\vH=\vH_W\otimes\vH_S$ with $\vH\in\R^{ad\times ad}, \vH_W\in\R^{a\times a}, \vH_S\in\R^{d\times d}$.
For each  hypothesis $\vH\vmu=\textbf{0}$ with such a matrix $\vH$  exist projectors $\vT\in\R^{ad\times ad}, \vT_W\in\R^{a\times a}, \vT_S\in\R^{d\times d}$ which can be used to formulate the same null hypothesis $\vT\vmu=\textbf{0}$ with $\vT=\vT_W\otimes\vT_S$.

\end{LeA}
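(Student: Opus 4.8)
The plan is to work directly with the canonical projector $\vT = \vH^\top[\vH\vH^\top]^-\vH$ attached to $\vH = \vH_W\otimes\vH_S$ and to show that it factors as the Kronecker product of the analogous projectors $\vT_W = \vH_W^\top[\vH_W\vH_W^\top]^-\vH_W$ and $\vT_S = \vH_S^\top[\vH_S\vH_S^\top]^-\vH_S$. Since the equivalence of the hypotheses $H_0(\vH)$ and $H_0(\vT)$ is already available (both matrices share the kernel $(\range(\vH^\top))^\perp$), the entire content of the lemma reduces to this factorization together with the observation that $\vT_W$, $\vT_S$ and $\vT$ are genuine projectors. Throughout I would rely on the two elementary Kronecker identities $(\vA\otimes\vB)^\top = \vA^\top\otimes\vB^\top$ and the mixed-product rule $(\vA\otimes\vB)(\vC\otimes\vD) = (\vA\vC)\otimes(\vB\vD)$, valid whenever the factors conform.

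First I would compute $\vH\vH^\top = (\vH_W\otimes\vH_S)(\vH_W^\top\otimes\vH_S^\top) = (\vH_W\vH_W^\top)\otimes(\vH_S\vH_S^\top)$ by the mixed-product rule. The key auxiliary claim is then that $\vG := [\vH_W\vH_W^\top]^-\otimes[\vH_S\vH_S^\top]^-$ is a generalized inverse of $\vH\vH^\top$. This follows again from the mixed-product rule: writing $\vM_W = \vH_W\vH_W^\top$, $\vM_S = \vH_S\vH_S^\top$ and letting $\vG_W,\vG_S$ be the chosen generalized inverses of $\vM_W,\vM_S$, one has $(\vM_W\otimes\vM_S)\vG(\vM_W\otimes\vM_S) = (\vM_W\vG_W\vM_W)\otimes(\vM_S\vG_S\vM_S) = \vM_W\otimes\vM_S$. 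Invoking the standard fact that the sandwich $\vH^\top[\vH\vH^\top]^-\vH$ is invariant under the particular choice of generalized inverse, I may substitute $\vG$ and apply the mixed-product rule once more to obtain
\[
\vT = (\vH_W^\top\otimes\vH_S^\top)\,\vG\,(\vH_W\otimes\vH_S) = \big(\vH_W^\top[\vH_W\vH_W^\top]^-\vH_W\big)\otimes\big(\vH_S^\top[\vH_S\vH_S^\top]^-\vH_S\big) = \vT_W\otimes\vT_S .
\]
That $\vT_W$ and $\vT_S$ are orthogonal projectors (symmetric, idempotent, onto $\range(\vH_W^\top)$ and $\range(\vH_S^\top)$ respectively) is the classical property of the map $\vH^\top[\vH\vH^\top]^-\vH$, and $\vT = \vT_W\otimes\vT_S$ inherits symmetry and idempotency from its factors via the two Kronecker identities, so it is a projector as well.

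I expect the only genuinely delicate point to be the invariance of $\vH^\top[\vH\vH^\top]^-\vH$ under the non-unique generalized inverse, together with the verification that a Kronecker product of generalized inverses is a generalized inverse of the Kronecker product; both are standard, but they are what makes the computation legitimate rather than merely formal. A clean alternative that sidesteps the invariance issue entirely is to argue via uniqueness of the orthogonal projection: since $\vT_W\otimes\vT_S$ is symmetric and idempotent with range $\range(\vT_W\otimes\vT_S) = \range(\vT_W)\otimes\range(\vT_S) = \range(\vH_W^\top)\otimes\range(\vH_S^\top) = \range(\vH^\top)$, it must coincide with the unique orthogonal projector onto $\range(\vH^\top)$, which is $\vT$. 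Either route yields $\vT = \vT_W\otimes\vT_S$, and since $\ker\vT = (\range(\vH^\top))^\perp = \ker\vH$, the equations $\vH\vmu = \mathbf 0$ and $\vT\vmu = \mathbf 0$ define the same null, completing the proof.
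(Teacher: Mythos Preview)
Your proof is correct and follows essentially the same route as the paper: both compute $\vT=\vH^\top[\vH\vH^\top]^-\vH$ via the Kronecker identities $(\vA\otimes\vB)^\top=\vA^\top\otimes\vB^\top$, the mixed-product rule, and the fact that $[\vM_W\otimes\vM_S]^-=\vM_W^-\otimes\vM_S^-$, arriving at $\vT=\vT_W\otimes\vT_S$. You are slightly more explicit than the paper about why the Kronecker product of generalized inverses is again a generalized inverse and about the invariance of the sandwich under the choice of $(\cdot)^-$; the paper simply cites Rao and Mitra for these steps, and your alternative argument via uniqueness of the orthogonal projector onto $\range(\vH^\top)$ is a pleasant addition not present in the original.
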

\begin{proof}
{It is known that the projector $\vT=\vH^\top[\vH\vH^\top]^-\vH$ fulfills $\vT\vmu =\textbf{0}\Leftrightarrow \vH\vmu=\textbf{0}$. For this reason and utilizing well known rules ( see for example \cite{Rao} ) for generalized inverses we obtain
\[\begin{array}{ll}
 \vT&=\vH^\top[\vH\vH^\top]^-\vH\\
 &= (\vH_W\otimes \vH_S)^\top [ (\vH_W\otimes \vH_S) (\vH_W\otimes \vH_S)^\top]^- (\vH_W\otimes \vH_S)
 \\[1.1ex]
  &= (\vH_W^\top\otimes \vH_S^\top) [ (\vH_W\otimes \vH_S) (\vH_W^\top\otimes \vH_S^\top)]^- (\vH_W\otimes \vH_S)
  \\[1.1ex]
   &= (\vH_W^\top\otimes \vH_S^\top) [ (\vH_W \vH_W^\top)\otimes (\vH_S\vH_S^\top)]^- (\vH_W\otimes \vH_S)
   \\[1.1ex]
      &= (\vH_W^\top\otimes \vH_S^\top) ([ \vH_W \vH_W^\top)]^- \otimes [\vH_S\vH_S^\top]^-) (\vH_W\otimes \vH_S)
      \\[1.1ex]
      &= (\vH_W^\top\otimes \vH_S^\top) ([ \vH_W \vH_W^\top)]^-\vH_W \otimes [\vH_S\vH_S^\top]^-\vH_S) 
      \\  [1.1ex]    
      &=\vH_W^\top[ \vH_W \vH_W^\top]^-\vH_W \otimes\vH_S^\top [\vH_S\vH_S^\top]^-\vH_S 
      \\    
      &=\vT_W\otimes \vT_S.
\end{array}\]\\
Thus, $\vT_W:=\vH_W^\top[ \vH_W \vH_W^\top]^-\vH_W$ and $\vT_S:=\vH_S^\top [\vH_S\vH_S^\top]^-\vH_S$ are projectors, i.e. idempotent and symmetric. }
\end{proof}\vspace{\baselineskip}

For proofing our main results we have to compare various traces of powers of combinations underlying covariance matrices. To this end, we will particularly apply the following inequalities:

\begin{LeA}\label{Spur1}For positive real numbers a,b and a symmetric matrix $\vA\in \R^{d\times d }$ it holds
\[\tr^2\left(\vA^{a+b}\right)\leq \tr\left(\vA^{2a}\right)\tr\left(\vA^{2b}\right).
\]
For $\vA\in \R^{d\times d }$ symmetric with eigenvalues $\lambda_1,\dots,\lambda_d\geq 0$ it holds that
\[\tr\left(\vA^2\right)\leq \tr^2\left(\vA\right).\]

If $\vSigma_i\in \R^{d\times d }$ is positive definite and symmetric and $\vT\in \R^{d\times d }$ is idempotent and symmetric 
it holds for every $k\in \N$ that
\[\begin{array}{l}\tr\left(\left(\vT\vSigma_i\right)^{2k}\right)\leq \tr^2\left(\left(\vT\vSigma_i\right)^k\right).
\end{array}\]

\end{LeA}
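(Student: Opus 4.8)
The plan is to reduce all three inequalities to elementary facts about eigenvalues, using that every matrix involved is either symmetric positive semidefinite or similar to one, so that its trace powers are sums of nonnegative powers of its eigenvalues. Once everything is phrased spectrally, the first inequality is Cauchy--Schwarz, the second is nonnegativity of cross terms, and the third reduces to the second by a similarity argument.

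First I would treat the opening bound. Writing the spectral decomposition of the symmetric matrix $\vA$ with eigenvalues $\mu_1,\dots,\mu_d$, one has $\tr(\vA^{m})=\sum_{i=1}^d \mu_i^{m}$ for each relevant exponent $m$, the powers being understood via the spectral calculus (so $\vA$ is taken positive semidefinite where non-integer powers occur). Applying the Cauchy--Schwarz inequality to the vectors $(\mu_i^{a})_i$ and $(\mu_i^{b})_i$ in $\R^d$ then gives
\[
\tr^2(\vA^{a+b})=\Big(\sum_{i=1}^d \mu_i^{a}\mu_i^{b}\Big)^2\leq \Big(\sum_{i=1}^d \mu_i^{2a}\Big)\Big(\sum_{i=1}^d \mu_i^{2b}\Big)=\tr(\vA^{2a})\tr(\vA^{2b}).
\]
The second inequality is then immediate: for eigenvalues $\lambda_1,\dots,\lambda_d\geq 0$ all cross terms in $\tr^2(\vA)=\sum_i\lambda_i^2+\sum_{i\neq j}\lambda_i\lambda_j$ are nonnegative, whence $\tr(\vA^2)=\sum_i\lambda_i^2\leq \tr^2(\vA)$.

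For the third inequality the main obstacle is that $\vT\vSigma_i$ is in general \emph{not} symmetric, so I cannot directly invoke nonnegativity of its eigenvalues. I would circumvent this by a similarity transform: since $\vSigma_i$ is positive definite, its square root $\vSigma_i^{1/2}$ exists and is invertible, and
\[
\vSigma_i^{1/2}(\vT\vSigma_i)\vSigma_i^{-1/2}=\vSigma_i^{1/2}\vT\vSigma_i^{1/2}=:\vB .
\]
Thus $\vT\vSigma_i$ is similar to $\vB$, which is symmetric; moreover $\vB$ is positive semidefinite, since for every $x$ we have $x^\top\vB x=(\vSigma_i^{1/2}x)^\top\vT(\vSigma_i^{1/2}x)=\|\vT\vSigma_i^{1/2}x\|^2\geq 0$, using that the symmetric idempotent $\vT$ is an orthogonal projection. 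Because similar matrices have equal traces, $\tr((\vT\vSigma_i)^{m})=\tr(\vB^{m})$ for all $m$, and applying the second inequality to the symmetric positive semidefinite matrix $\vB^{k}$ yields
\[
\tr\big((\vT\vSigma_i)^{2k}\big)=\tr\big((\vB^{k})^2\big)\leq \tr^2(\vB^{k})=\tr^2\big((\vT\vSigma_i)^{k}\big).
\]
The only genuinely delicate point is establishing the similarity together with the positive semidefiniteness of $\vB$; once these are in place, all three assertions follow from Cauchy--Schwarz and the nonnegativity of the eigenvalues, with no further computation required.
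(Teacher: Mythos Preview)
Your proof is correct. Parts one and two coincide with the paper's argument: the paper also invokes Cauchy--Schwarz (phrased via the Frobenius inner product rather than eigenvalue vectors, but this is the same inequality) and the nonnegativity of the cross terms $\lambda_i\lambda_j$.

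For the third inequality your route differs from the paper's. The paper exploits the idempotence of $\vT$ to rewrite $\tr\big((\vT\vSigma_i)^{2k}\big)=\tr\big((\vT\vSigma_i\vT)^{2k}\big)$ via cyclic trace invariance, and then works with the symmetric matrix $\vT\vSigma_i\vT$; it must still verify separately that $(\vT\vSigma_i\vT)^k$ is positive semidefinite, which it does by a short parity argument in $k$. You instead conjugate by $\vSigma_i^{1/2}$ to obtain the similar matrix $\vB=\vSigma_i^{1/2}\vT\vSigma_i^{1/2}$, which is symmetric and positive semidefinite in one stroke, and then invoke similarity to transfer all trace powers at once. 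Your argument is slightly more economical: similarity handles every power simultaneously and the positive semidefiniteness of $\vB$ (hence of $\vB^k$) is immediate from $\vT$ being a projection. The paper's approach, on the other hand, does not require $\vSigma_i$ to be invertible---only positive semidefinite---so it would generalize marginally further, though that extra generality is not needed here.
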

\begin{proof}The first part is an application of the Cauchy–Bunyakovsky–Schwarz inequality, with the Frobenius inner product. Therefore
\[\begin{array}{ll}\tr^2\left(\vA^{a+b}\right)&= \tr^2\left(\vA^{a}\vA^b\right)= \tr^2\left(\vA^{a}{\vA^b}^\top\right)\\&\leq \left(\sqrt{\tr\left(\vA^{a}{\vA^{a}}^\top\right)}\cdot \sqrt{\tr\left(\vA^{b}{\vA^{b}}^\top\right)}\right)^2 ={\tr\left(\vA^{a}{\vA^{a}}\right)}\cdot {\tr\left(\vA^{b}{\vA^{b}}\right)}\\&=\tr\left(\vA^{2a}\right)\tr\left(\vA^{2b}\right).\end{array}\]

The second part just uses the  binomial theorem together with the condition $\lambda_t\geq 0$ for $t=1,\dots,d$:

\[\tr(\vA^2)=\sum\limits_{t=1}^d \lambda_t^2\leq \sum\limits_{t_1=1}^d \lambda_{t_1}^2+\sum\limits_{t_1=1}^d \sum\limits_{t_2=1 , t_2\neq t_1 }^d \lambda_{t_1}\lambda_{t_2}=\left(\sum\limits_{t=1}^d \lambda_t\right)^2=\tr^2(\vA).\]

Finally, the last inequality follows from the second one, if we show that all conditions are fulfilled. With idempotence of $\vT$ and invariance of the trace  under cyclic permutations, it follows for all $k\in \N$ that

\[
\tr\left(\left(\vT\vSigma_i\right)^{2k}\right)
=\tr\left(\vT^2\vSigma_i \cdot \dots \cdot \vT^2 \vSigma_i\right)=
\tr\left(\vT\vSigma_i \vT   \cdot \dots \cdot \vT \vSigma_i \vT\right)= \tr\left(\left(\vT\vSigma_i \vT\right)^{2k}\right). 
\]\\

Thus, it is sufficient to consider this term. Since $\vT\vSigma_i \vT$ is symmetric all powers are symmetric too and
it follows with $ k'=\lfloor k/2\rfloor$ that

\[\begin{array}
{lrl}\forall  \vx\in\R^d: \hspace{0.5cm}& \vx^\top \left(\vT\vSigma_i \vT\right)^{k} \vx&= \vx^\top \left(\vT\vSigma_i \vT\right)^{k'}\vT \vSigma_i^{k-2k'}\vT\left(\vT\vSigma_i \vT\right)^{k'} \vx
\\[1ex]
&&= \left[\vT\left(\vT\vSigma_i \vT\right)^{k'}\vx\right]^\top\vSigma_i^{k-2k'}\left[\vT\left(\vT\vSigma_i \vT\right)^{k'} \vx\right] \geq 0
\end{array}\]
since $\vSigma_i$ and $\vI_d$  are positive definite and ${k-2k'}\in \{0,1\}$. So both conditions of the second inequation are shown and
 \[\tr\left(\left(\vT\vSigma_i\right)^{2k}\right)= \tr\left(\left(\vT\vSigma_i \vT\right)^{2k}\right)=\tr\left(\left[\left(\vT\vSigma_i \vT\right)^{k}\right]^2\right)\leq \tr^2\left(\left(\vT\vSigma_i \vT\right)^{k}\right)=\tr^2\left(\left(\vT\vSigma_i \right)^{k}\right).\]
\end{proof}
Furthermore, an inequality for traces which contain $\vSigma_i$ and $\vSigma_r$ is needed.

\begin{LeA}\label{Spur2}
Let $\vSigma_i,\vSigma_r\in \R^{d\times d }$  be positive definite and symmetric matrices and suppose that $\vT\in \R^{d\times d }$ is idempotent and symmetric. Then it holds for $i\neq r$ that
\[\tr\left(\left(\vT\vSigma_i \vT\vSigma_r\right)^2\right)\leq  \tr^2\left(\vT\vSigma_i \vT\vSigma_r\right).\]

\end{LeA}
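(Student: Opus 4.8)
The plan is to reduce the claim to the second inequality of \Cref{Spur1}, namely $\tr(\vB^2)\le\tr^2(\vB)$ for symmetric matrices $\vB$ with nonnegative eigenvalues. Writing $\vA:=\vT\vSigma_i\vT\vSigma_r$, the strategy is to produce a genuinely symmetric positive semidefinite matrix $\vB$ that is \emph{similar} to $\vA$; then the two traces entering the assertion are unchanged when $\vA$ is replaced by $\vB$, and \Cref{Spur1} finishes the job. The difference to \Cref{Spur1} is precisely that two distinct covariances $\vSigma_i\ne\vSigma_r$ appear, so $\vA$ is in general non-symmetric and not of the form $(\vT\vSigma\vT)^k$; symmetry can therefore no longer be arranged by cyclic permutation alone.

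First I would isolate the symmetric positive semidefinite factor $\vM:=\vT\vSigma_i\vT$; symmetry is immediate from $\vT=\vT^\top$ and $\vSigma_i=\vSigma_i^\top$, and positive semidefiniteness follows from $\vx^\top\vM\vx=(\vT\vx)^\top\vSigma_i(\vT\vx)\ge 0$, exactly as in the proof of \Cref{Spur1}. Next, since $\vSigma_r$ is positive definite it possesses a symmetric positive definite square root $\vSigma_r^{1/2}$, and I would rewrite $\vA=\vM\vSigma_r=\vSigma_r^{-1/2}\bigl(\vSigma_r^{1/2}\vM\vSigma_r^{1/2}\bigr)\vSigma_r^{1/2}$. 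This exhibits $\vA$ as similar to $\vB:=\vSigma_r^{1/2}\vM\vSigma_r^{1/2}$, which is symmetric and positive semidefinite because $\vx^\top\vB\vx=(\vSigma_r^{1/2}\vx)^\top\vM(\vSigma_r^{1/2}\vx)\ge 0$. In particular $\vB$ has nonnegative eigenvalues.

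Finally, similar matrices share the same characteristic polynomial, whence $\tr(\vA)=\tr(\vB)$, and $\vA^2$ is similar to $\vB^2$, whence $\tr(\vA^2)=\tr(\vB^2)$. Applying the second inequality of \Cref{Spur1} to the symmetric positive semidefinite matrix $\vB$ then yields
\[\tr\bigl((\vT\vSigma_i\vT\vSigma_r)^2\bigr)=\tr(\vB^2)\le\tr^2(\vB)=\tr^2(\vT\vSigma_i\vT\vSigma_r),\]
which is the assertion. The only genuinely delicate point is the similarity step: one must verify that the square-root construction truly symmetrizes $\vA$ (so that the eigenvalue inequality becomes applicable) rather than merely rewriting it, and that the positive definiteness of $\vSigma_r$ is exactly what guarantees the invertibility of $\vSigma_r^{1/2}$ needed for the similarity transformation. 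Everything else is a transfer of traces across a similarity and an appeal to the already established inequality.
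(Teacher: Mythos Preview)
Your proof is correct and follows essentially the same strategy as the paper: symmetrize the product via a square-root sandwich and then invoke the second inequality of \Cref{Spur1}. The only cosmetic difference is that you take the square root of $\vSigma_r$ itself (exploiting its invertibility to frame the reduction as a similarity), whereas the paper takes the symmetric square root $\vW$ of $\vT\vSigma_r\vT$ and uses cyclic permutation of the trace; both routes produce a symmetric positive semidefinite matrix with the same trace and squared trace as $\vT\vSigma_i\vT\vSigma_r$.
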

\begin{proof}
As shown before $\vT\vSigma_i\vT$ and $\vT\vSigma_r\vT$  are symmetric and positive semidefinite.
For this reason, a symmetric matrix  $\vW$  exists with $\vW\vW=\vT\vSigma_r\vT$.
 Due the fact that all matrices are symmetric it holds

\[(\vW\vT\vSigma_i\vT \vW)^\top= \vW^\top \vT^\top\vSigma_i^\top \vT^\top \vW^\top=\vW\vT\vSigma_i \vT \vW\]  and because  $\vT\vSigma_i\vT$ is positive semidefinite also
\[\forall \vx \in \R^d \hspace*{1cm} \vx^\top \vW\vT\vSigma_i\vT \vW \vx=(\vW \vx)^\top \vT\vSigma_i \vT(\vW\vx)=\vy^\top \vT\vSigma_i \vT \vy \geq 0.\]\\
This allows to use the inequalities from above for this matrix, and again utilizing the invariance of the trace
under cyclic permutations we obtain\\

$\begin{array}{ll}
\tr\left(\left(\vT\vSigma_i\vT\vSigma_r\right)^2\right)&=\tr\left(\vT\vSigma_i\vT\vT\vSigma_r\vT\Cdot \vT\vSigma_i\vT\vT\vSigma_r\vT\right)=\tr\left(\vT\vSigma_i\vT\vW\vW \vT\vSigma_i\vT\vW\vW\right)
\\
&=\tr\left(\vW\vT\vSigma_i\vT\vW\vW \vT\vSigma_i\vT\vW\right)=\tr\left(\left(\vW\vT\vSigma_i\vT\vW\right)^2\right)\\
&\leq \tr^2\left(\vW\vT\vSigma_i\vT\vW\right)
=\tr^2\left(\vT\vSigma_i\vT\vW\vW\right)=\tr^2\left(\vT\vSigma_i\vT\vT\vSigma_r\vT\right)\\&=\tr^2\left(\vT\vSigma_i\vT\vSigma_r\right).\end{array}$\\
\end{proof}

To standardize the quadratic form we also have to calculate its moments. Here, the following theorem helps:

\begin{SaA}\label{QF3}

Let $\vT\in \R^{d\times d}$ be a symmetric matrix and ${\vX}\sim \mathcal{N}_d\left(\vmu_X,\vSigma_X\right),$ where $\vSigma_X$ is positive definite. Then with $r\in \N$ it holds,  
\[\E\left(\left({\vX}^\top \vT{\vX}\right)^r\right)=\sum\limits_{r_1=0}^{r-1} \binom {r-1}  {r_1} g^{\left(r-1-r_1\right)}\sum\limits_{r_2=0}^{r_1-1}\binom {r_1-1}{r_2}g^{\left(r_1-1-r_2\right)}\dots\]
with  $g^{\left(k\right)}=2^kk!\left[\tr\left(\left(\vT\vSigma\right)^{k+1}\right)+\left(k+1\right)\vmu_{X}\left(\vT\vSigma\right)^k \vT\vmu_{X}\right]$ for $k\in \N$ and $g^{\left(0\right)}=\tr\left(\vT\vSigma_{X}\right)+{\vmu_{X}}^\top \vT\vmu_{X}$.
\end{SaA}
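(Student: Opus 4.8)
The plan is to recognize that the quantities $g^{(k)}$ are, up to a shift of the index, the cumulants of the quadratic form $Q:=\vX^\top\vT\vX$, and that the asserted nested sum is nothing but the classical moment–cumulant recursion unfolded. Concretely, I would first establish that the $k$-th cumulant of $Q$ equals $\kappa_k = 2^{k-1}(k-1)!\bigl[\tr((\vT\vSigma_X)^k) + k\,\vmu_X^\top(\vT\vSigma_X)^{k-1}\vT\vmu_X\bigr]$, so that $g^{(k)} = \kappa_{k+1}$ for $k\geq 1$ while $g^{(0)} = \kappa_1 = \tr(\vT\vSigma_X)+\vmu_X^\top\vT\vmu_X$, and then feed these into the recursion.

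To obtain the cumulants I would start from the moment generating function of the (possibly non-central) Gaussian quadratic form,
\[ M_Q(t) = \E\bigl(e^{t\,\vX^\top\vT\vX}\bigr) = \dt(\vI_d - 2t\,\vT\vSigma_X)^{-1/2}\exp\!\Bigl(\tfrac12\vmu_X^\top\bigl[(\vI_d-2t\,\vT\vSigma_X)^{-1}-\vI_d\bigr]\vSigma_X^{-1}\vmu_X\Bigr), \]
obtained by completing the square in the Gaussian integral; this is finite for $t$ in a neighbourhood of $0$ precisely because $\vSigma_X$ is positive definite, so $\vI_d - 2t\,\vT\vSigma_X$ is invertible there. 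Taking logarithms and using $\log\dt(\cdot)=\tr\log(\cdot)$ together with the series $-\tfrac12\log\dt(\vI_d-2t\,\vT\vSigma_X) = \tfrac12\sum_{k\geq1}\tfrac{(2t)^k}{k}\tr((\vT\vSigma_X)^k)$ and $(\vI_d-2t\,\vT\vSigma_X)^{-1}-\vI_d = \sum_{k\geq1}(2t)^k(\vT\vSigma_X)^k$, the cumulant generating function becomes $K_Q(t) = \sum_{k\geq1}2^{k-1}\bigl[\tfrac1k\tr((\vT\vSigma_X)^k)+\vmu_X^\top(\vT\vSigma_X)^{k-1}\vT\vmu_X\bigr]t^k$, where I have used the simplification $(\vT\vSigma_X)^k\vSigma_X^{-1}=\vT(\vSigma_X\vT)^{k-1}$ and the fact that $\vmu_X^\top(\vT\vSigma_X)^{k-1}\vT\vmu_X$ is unchanged under transposition. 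Reading off the coefficient of $t^k/k!$ yields the claimed $\kappa_k$, hence $g^{(k)}=\kappa_{k+1}$.

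Finally, I would produce the recursion by differentiating the identity $M_Q'(t)=K_Q'(t)\,M_Q(t)$ a total of $(r-1)$ times with the Leibniz rule and evaluating at $t=0$, which gives $\E(Q^r)=\sum_{j=0}^{r-1}\binom{r-1}{j}\kappa_{r-j}\,\E(Q^j)$; substituting $\kappa_{r-j}=g^{(r-1-j)}$ and renaming $j=r_1$ reproduces the outer sum, and applying the same identity to each $\E(Q^{r_1})$ in turn produces the inner sums $\sum_{r_2=0}^{r_1-1}\binom{r_1-1}{r_2}g^{(r_1-1-r_2)}\cdots$, i.e. the displayed nested expression with its trailing ``$\dots$'' understood as this unfolding down to $\E(Q^0)=1$. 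The main obstacle is the verification of the cumulant formula: one must justify the two power-series expansions (convergence for small $t$ and termwise handling of the matrix logarithm and inverse) and carry out the matrix identity $(\vT\vSigma_X)^k\vSigma_X^{-1}=\vT(\vSigma_X\vT)^{k-1}$ that converts the mean-dependent term into the symmetric form $\vmu_X^\top(\vT\vSigma_X)^{k-1}\vT\vmu_X$ appearing in $g^{(k)}$; the positive definiteness of $\vSigma_X$ is exactly what makes $\vSigma_X^{-1}$ and the small-$t$ expansions legitimate. Once the cumulants are identified, the moment–cumulant recursion and its unfolding are purely formal.
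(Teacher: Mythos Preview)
Your proposal is correct. The paper itself does not give a proof of this statement at all; it simply cites page~53 of Mathai and Provost (1992), where this recursive moment formula for Gaussian quadratic forms is established. Your argument---identifying $g^{(k)}$ as the $(k{+}1)$-st cumulant of $Q=\vX^\top\vT\vX$ via the explicit cumulant generating function, and then unfolding the standard moment--cumulant recursion obtained from Leibniz differentiation of $M_Q'=K_Q'M_Q$---is precisely the classical derivation and is essentially how Mathai and Provost obtain it as well. The key computations you sketch (the MGF of a non-central Gaussian quadratic form, the series expansions of $-\tfrac12\log\dt(\vI_d-2t\vT\vSigma_X)$ and $(\vI_d-2t\vT\vSigma_X)^{-1}-\vI_d$, and the simplification $(\vT\vSigma_X)^k\vSigma_X^{-1}=(\vT\vSigma_X)^{k-1}\vT$) are all correct, and the positive definiteness of $\vSigma_X$ indeed suffices to make the MGF finite and analytic in a neighbourhood of $t=0$. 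There is nothing to add; you have supplied what the paper omits.
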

\begin{proof}
The proof can be found on page 53 in \cite{Buch1}.
\end{proof}

\begin{KoA}\label{QF4}
Let $\vT\in \R^{d\times d}$ be a symmetric matrix and $\vX\sim \mathcal{N}_d\left(\boldsymbol{0}_d,\vSigma_X\right)$ and ${\vY}\sim \mathcal{N}_d\left(\boldsymbol{0}_d,\vSigma_Y\right)$ independent,  where $\vSigma_X,\vSigma_Y\in \R^{d\times d }$ are positive definite. Then we have for all $n_i,n_r,N\in \N$ that\\

$\begin{array}{ll}
\E\left(\left({\vX}^\top \vT{\vX}\right)^1\right)=\tr\left(\vT\vSigma_X\right),
\\[1.3ex]
\E\left(\left({\vX}^\top \vT{\vX}\right)^2\right)=
 2\tr\left(\left(\vT\vSigma_X\right)^2\right)+\tr^2\left(\vT\vSigma_X\right)\stackrel{\ref{Spur1}}{=}\0\left(\tr^2\left(\vT\vSigma_X\right)\right),
\\[1.8ex]
\Var\left({\vX}^\top \vT{\vX}\right)=\0\left(\tr^2\left(\vT\vSigma_X\right)\right),
 \end{array}$\\
 $\begin{array}{l} 
 \E\left(\left({\vX}^\top \vT{\vY}\right)^1\right)=0,
 \\[1.0ex]
  \E\left(\left({\vX}^\top \vT{\vY}\right)^2\right)=\tr\left(\vT\vSigma_X\vT\vSigma_Y\right),
  \\[1.0ex]
 \E\left(\left({\vX}^\top \vT{\vY}\right)^3\right)=0,
 \\[1.0ex]
  \E\left(\left({\vX}^\top \vT{\vY}\right)^4\right)=6 \tr\left(\left(\vT\vSigma_X\vT\vSigma_Y\right)^2\right)+3\tr^2\left(\vT\vSigma_X\vT\vSigma_Y\right),
 \end{array}$\\
 
$\begin{array}{l}
\Var\left({\vX}^\top \vT{\vY}\right)=\tr\left(\vT\vSigma_X\vT\vSigma_Y\right),
\\[1.0ex]
\Var\left(\left({\vX}^\top \vT{\vY}\right)^2\right)=6 \tr\left(\left(\vT\vSigma_X\vT\vSigma_Y\right)^2\right)+2\tr^2\left(\vT\vSigma_X\vT\vSigma_Y\right),
\\[2ex]
\frac{4N}{n_i^2n_r^2}\Var\left(\left({\vX}^\top \vT{\vY}\right)^2\right)\stackrel{\ref{Spur2}}{=}\0\left( \tr^2\left(\left(\frac{N}{n_i}\vT\vSigma_X \cdot\frac{N}{n_r}\vT\vSigma_Y\right)^2\right)\right).
 \end{array}$\\

Moreover, for $\vSigma_X=\vSigma_Y$\\
\\$\begin{array}{l}
 \Var\left({\vX}^\top \vT{\vY}\right)=\tr\left(\vT\vSigma_X\vT\vSigma_X\right)=\0\left(\tr^2\left(\vT\vSigma_X\vT\vSigma_X\right)\right),
 \\[2ex]
\Var\left(\left({\vX}^\top \vT{\vY}\right)^2\right)\stackrel{\ref{Spur1}}{=}\0\left(\tr^2\left(\vT\vSigma_X\vT\vSigma_X\right)\right).
\end{array}$
\end{KoA}

\begin{proof}

Using the inequalities for traces and with the bilinear form written as
\[{\vX}^\top \vT\vY=\frac 1 2
\begin{pmatrix} {\vX} \\ \vY\end{pmatrix}^\top \begin{pmatrix}
0&\vT\\\vT&0\\
\end{pmatrix}\begin{pmatrix}\vX \\ \vY \end{pmatrix},\hspace*{1cm}\begin{pmatrix} {\vX} \\ \vY\end{pmatrix}\sim \mathcal{N}_{2d}\left(\begin{pmatrix} \vmu_X \\ \vmu_Y\end{pmatrix},\begin{pmatrix} \vSigma_X&\vSigma_{XY} \\ \vSigma_{XY}&\vSigma_Y\end{pmatrix}\right)\]
 all equations follows with the previous theorem.
\end{proof}
 \vspace{\baselineskip}

\begin{LeA}\label{Kons2}
Let $X_n\in\mathcal{L}^2$ be a real random variable with $\E(X_n)=\mu $, $b_{n,d}$ a sequence with ${\lim_{n,d\to \infty}b_{n,d}= 0}$, and $c_{a,d,n_{\min}}$ a sequence with  $\lim_{a,d,n_{\min}\to \infty}c_{n,d}= 0$ then it holds
%
\begin{itemize}
\item $\Var\left({X_n}\right)\leq b_{n,d}\  \ \Rightarrow \ {X}_n \text{ is an consistent  estimator for  }\mu,{\text{ if } n,d\to \infty, }$
\item$\Var\left({X_n}\right)\leq c_{a,d,n_{\min}}\  \Rightarrow {X}_n \text{ is an consistent estimator for  }\mu,     {\text{ if } a,d,n_{\min}\to \infty.}$
\end{itemize}
For $\mu\neq 0$ they are especially ratio-consistent.
\end{LeA}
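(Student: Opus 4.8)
The plan is to reduce the whole statement to a single application of Chebyshev's inequality, which is the standard device turning a vanishing variance into convergence in probability. Since $X_n\in\mathcal{L}^2$ with $\E(X_n)=\mu$, for every $\eps>0$ we have
\[
\PP\bigl(|X_n-\mu|\geq\eps\bigr)\leq\frac{\Var(X_n)}{\eps^2}.
\]
Both assertions then follow by inserting the respective variance bounds and letting the relevant indices tend to infinity.

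For the first bullet I would substitute $\Var(X_n)\leq b_{n,d}$ to obtain $\PP(|X_n-\mu|\geq\eps)\leq b_{n,d}/\eps^2$; since $b_{n,d}\to0$ as $n,d\to\infty$, the right-hand side vanishes along the joint limit, which is precisely $X_n\stoch \mu$. The second bullet is word-for-word identical with $c_{a,d,n_{\min}}$ in place of $b_{n,d}$ and the joint limit $a,d,n_{\min}\to\infty$; the only thing to keep track of is that convergence is understood along the array indexed by the several growing parameters, but no extra work is needed because the upper bound already collapses under the full limit. This is exactly the regime encountered in the frameworks \eqref{eq: as frame 1}--\eqref{eq: as frame 3}.

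For the ratio-consistency claim when $\mu\neq0$, I would rescale: writing $X_n/\mu-1=(X_n-\mu)/\mu$ and applying Chebyshev to $(X_n-\mu)/\mu$ gives
\[
\PP\bigl(|X_n/\mu-1|\geq\eps\bigr)=\PP\bigl(|X_n-\mu|\geq|\mu|\eps\bigr)\leq\frac{\Var(X_n)}{\mu^2\eps^2},
\]
whose right-hand side again tends to zero under either variance bound, so that $X_n/\mu\stoch 1$. Equivalently, one may invoke the continuous mapping theorem for the map $x\mapsto x/\mu$, which is continuous at $x=\mu$ because $\mu\neq0$.

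There is no genuine obstacle here: this is the familiar ``$L^2$-bound implies (ratio-)consistency'' lemma, and the entire content is Chebyshev plus bookkeeping of the asymptotic indices. The only point that deserves care in the applications is that $\mu$ and the variance bound will in practice depend on $N$ (through $a(N)$ and $d(N)$); I would therefore state the probability estimates uniformly in those hidden dependencies, which is what makes the ratio-consistent conclusion the robust one to carry into the main proofs.
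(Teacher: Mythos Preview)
Your proposal is correct and essentially identical to the paper's proof: both apply Chebyshev's inequality to bound $\PP(|X_n-\mu|\geq\eps)$ by $\Var(X_n)/\eps^2$, insert the respective variance bound, take the joint limit, and then repeat the argument for $X_n/\mu$ to obtain ratio-consistency. The only difference is that you spell out the rescaling step and mention the continuous mapping theorem as an alternative, whereas the paper simply says ``using this for $X_n/\mu$ leads to ratio-consistency.''
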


\begin{proof}
For arbitrary $\epsilon>0$  the Tschebyscheff inequality  leads to
\[\PP\left(|X_n-\mu|\geq \epsilon\right)\leq \frac{\E\left(|X_n-\mu|^2\right)}{\epsilon^2}=\frac{\Var\left(X_n\right)}{\epsilon^2}\leq \frac{b_{n,d}}{\epsilon^2}.\]
Consider the limit for $n,d\to \infty$ justifies the consistency and using this for $X_n/\mu$ leads to ratio-consistency. The second part follows  identically.
\end{proof}\vspace{1 \baselineskip}
This  result is especially true if $b_{n,d} $ or $c_{a,d,n_{\min}}$  only depends on n resp. $n_{\min}$.
%

For completeness we state a straightforward application of the Cauchy–Bunyakovsky–Schwarz inequality:

\begin{LeA}\label{Var1}
For real random variables $X,Y\in \mathcal{L}^2$ it holds
\[\Cov\left(X,Y\right)\leq \sqrt{\Var\left(X\right)}\sqrt{\Var\left(Y\right)}\] and so for $X,Y$ identically distributed
\[\Cov\left(X,Y\right)\leq {\Var\left(X\right)}.\] 
\end{LeA}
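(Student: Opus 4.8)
The plan is to recognise this as a direct application of the Cauchy--Bunyakovsky--Schwarz inequality on the Hilbert space $\mathcal{L}^2$ of square-integrable random variables, applied to the centred variables. First I would introduce the centred versions $\widetilde X = X - \E(X)$ and $\widetilde Y = Y - \E(Y)$, both of which again lie in $\mathcal{L}^2$. By definition of covariance we have $\Cov(X,Y) = \E(\widetilde X \widetilde Y)$, while $\Var(X) = \E(\widetilde X^2)$ and $\Var(Y) = \E(\widetilde Y^2)$.

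The key step is then the Cauchy--Bunyakovsky--Schwarz inequality with respect to the inner product $\langle U, V\rangle = \E(UV)$ on $\mathcal{L}^2$, which yields $|\E(\widetilde X\widetilde Y)| \leq \sqrt{\E(\widetilde X^2)}\sqrt{\E(\widetilde Y^2)}$. Substituting the identifications above gives $|\Cov(X,Y)| \leq \sqrt{\Var(X)}\sqrt{\Var(Y)}$, and since $\Cov(X,Y) \leq |\Cov(X,Y)|$ the first claimed inequality follows at once.

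For the second assertion I would simply observe that if $X$ and $Y$ are identically distributed then their variances agree, $\Var(X) = \Var(Y)$, so the right-hand side $\sqrt{\Var(X)}\sqrt{\Var(Y)}$ collapses to $\Var(X)$. This immediately upgrades the first inequality to $\Cov(X,Y) \leq \Var(X)$. There is no real obstacle here: the only point requiring the slightest care is that the statement is phrased with a signed $\leq$ rather than an absolute value, but this is harmless, since bounding $\Cov(X,Y)$ by its modulus loses nothing for an upper bound.
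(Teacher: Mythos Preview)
Your proof is correct and follows essentially the same route as the paper: the paper explicitly labels this lemma as a straightforward application of the Cauchy--Bunyakovsky--Schwarz inequality, and its (suppressed) argument likewise centres the variables, applies Cauchy--Schwarz to obtain $|\Cov(X,Y)| \leq \sqrt{\Var(X)}\sqrt{\Var(Y)}$, and then uses $\Var(X)=\Var(Y)$ for identically distributed $X,Y$ to get the second claim.
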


The next result gives equivalent conditions for $\beta_1\to a\in \{0,1\}$:
\begin{LeA}\label{Bedingungen}
Let be $\lambda_\ell$ again the eigenvalues of $\vT\vV_N\vT$ sorted so that $\lambda_1$ is the biggest one. Then it follows
\[\lim\limits_{N,d\to \infty}\frac{\lambda_1}{\sqrt{\sum_{\ell=1}^{ad}  \lambda_\ell^2}}= 1 \hspace*{0.2cm}\Leftrightarrow\hspace*{0.2cm} \lim\limits_{N,d\to \infty}\frac{\tr^2\left(\left(\vT\vV_N\right)^3\right)}{\tr^3\left(\left(\vT\vV_N\right)^2\right)}= 1 \hspace*{0.2cm}\Leftrightarrow\hspace*{0.2cm} \lim\limits_{N,d\to \infty}\frac{\tr\left(\left(\vT\vV_N\right)^4\right)}{\tr^2\left(\left(\vT\vV_N\right)^2\right)}\to 1,\]
\[\lim\limits_{N,d\to \infty}\frac{\lambda_1}{\sqrt{\sum_{\ell=1}^{ad}  \lambda_\ell^2}}= 0 \hspace*{0.2cm}\Leftrightarrow\hspace*{0.2cm} \lim\limits_{N,d\to \infty}\frac{\tr^2\left(\left(\vT\vV_N\right)^3\right)}{\tr^3\left(\left(\vT\vV_N\right)^2\right)}= 0 \hspace*{0.2cm}\Leftrightarrow\hspace*{0.2cm} \lim\limits_{N,d\to \infty}\frac{\tr\left(\left(\vT\vV_N\right)^4\right)}{\tr^2\left(\left(\vT\vV_N\right)^2\right)}= 0.\]

Moreover we know $0\leq\frac{\tr^2\left(\left(\vT\vV_N\right)^3\right)}{\tr^3\left(\left(\vT\vV_N\right)^2\right)}=\tau_P\leq 1.$ This Lemma also holds if $\lim_{N,d\to \infty}$ is replaced by $\lim_{a,N}\to \infty$ or $\lim_{a,d,N\to \infty}$. 
\end{LeA}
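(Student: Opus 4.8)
The plan is to reduce the whole statement to elementary, non-asymptotic inequalities between power sums of the eigenvalues of $\vT\vV_N\vT$. First I would record the identity $\tr((\vT\vV_N)^k)=\sum_{\ell=1}^{ad}\lambda_\ell^k$ for every $k\in\N$. Since $\vT$ is an orthogonal projection (idempotent and symmetric, cf.\ \Cref{ginverse}) and the trace is invariant under cyclic permutations, one checks $(\vT\vV_N\vT)^k=\vT(\vV_N\vT)^k$ and hence $\tr((\vT\vV_N)^k)=\tr((\vT\vV_N\vT)^k)$; the right-hand side is precisely the $k$-th power sum of the eigenvalues $\lambda_1\geq\cdots\geq\lambda_{ad}\geq 0$ of the positive semidefinite matrix $\vT\vV_N\vT$. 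The nonnegativity of the $\lambda_\ell$, guaranteed by $\vV_N\succ 0$, will be crucial.

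Writing $p_k:=\sum_\ell\lambda_\ell^k$ and introducing the normalized quantities $\beta_\ell=\lambda_\ell/\sqrt{p_2}$, I then observe $\sum_\ell\beta_\ell^2=1$, $\beta_\ell\geq 0$, and in particular $0\leq\beta_1\leq 1$. Rewriting the two target ratios gives $\tau_P=p_3^2/p_2^3=\left(\sum_\ell\beta_\ell^3\right)^2$ and $\tau_{CQ}=p_4/p_2^2=\sum_\ell\beta_\ell^4$. This is the decisive step: both quantities are now symmetric functions of a single nonnegative sequence whose squares sum to one, and the eigenvalue structure of $\vV_N$ no longer needs to be tracked.

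The heart of the argument is a pair of sandwich inequalities. Using $\beta_\ell^3\leq\beta_1\beta_\ell^2$ and $\beta_\ell^4\leq\beta_1^2\beta_\ell^2$ together with $\sum_\ell\beta_\ell^2=1$ yields the upper bounds $\sum_\ell\beta_\ell^3\leq\beta_1$ and $\sum_\ell\beta_\ell^4\leq\beta_1^2$; keeping only the leading term gives the matching lower bounds $\sum_\ell\beta_\ell^3\geq\beta_1^3$ and $\sum_\ell\beta_\ell^4\geq\beta_1^4$. Squaring the first pair produces
\[\beta_1^6\leq\tau_P\leq\beta_1^2\qquad\text{and}\qquad\beta_1^4\leq\tau_{CQ}\leq\beta_1^2,\]
and the upper bound on $\tau_P$ combined with $\beta_1\leq 1$ already gives the asserted $0\leq\tau_P\leq 1$. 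From here the equivalences follow by squeezing: since $\beta_1,\tau_P,\tau_{CQ}$ all lie in $[0,1]$, the lower bounds force $\beta_1\to 1\Rightarrow\tau_P,\tau_{CQ}\to 1$, while the upper bounds force $\tau_P\to 1$ or $\tau_{CQ}\to 1\Rightarrow\beta_1\to 1$; the zero case is read off the same two chains with the roles of the lower and upper bounds exchanged.

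Finally I would stress that none of these inequalities is asymptotic — they hold for each fixed $N$ and are wholly indifferent to whether $d$, $a$, or both diverge. Consequently the equivalences transfer verbatim to the frameworks \eqref{eq: as frame 2}--\eqref{eq: as frame 3}, which is exactly the closing sentence of the statement, with no separate argument required. The only place demanding genuine care is the reduction step: one must verify that the cyclic-trace manipulation really yields the power sums and that the spectrum is nonnegative, since the lower bound $\sum_\ell\beta_\ell^3\geq\beta_1^3$ would fail for a sign-indefinite sequence. Everything downstream of that is routine.
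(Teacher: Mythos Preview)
Your proof is correct and fully self-contained. The paper, by contrast, does not actually prove the statement: it simply cites Lemma~8.1 in the supplement of \cite{Paper1}, observing that that result is matrix-agnostic (so it applies to $\vT\vV_N\vT$ in place of the one-sample matrix) and that the asymptotic framework is irrelevant because it is encoded in the convergences themselves. Your argument is precisely the elementary power-sum computation that the cited lemma presumably carries out --- the normalization $\beta_\ell=\lambda_\ell/\sqrt{p_2}$, the identities $\tau_P=(\sum_\ell\beta_\ell^3)^2$ and $\tau_{CQ}=\sum_\ell\beta_\ell^4$, and the sandwich $\beta_1^6\le\tau_P\le\beta_1^2$, $\beta_1^4\le\tau_{CQ}\le\beta_1^2$ --- so the mathematical content is the same; you have simply made it explicit rather than outsourced it. Your closing remark that the inequalities are non-asymptotic and therefore indifferent to which of \eqref{eq: as frame 1}--\eqref{eq: as frame 3} is in force is exactly the paper's second sentence, stated more precisely.
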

\begin{proof}
This follows from Lemma 8.1 given in the supplement in \cite{Paper1}[page 21] since their result does not depend on the concrete matrix, i.e. can be directly applied for $\vV_N$. Moreover, the different asymptotic frameworks do not influence the proof since they are hidden within the above convergences. 
\end{proof}\vspace{\baselineskip}

\renewcommand{\arraystretch}{1}

To prove the properties of the subsampling-type  estimators some auxiliaries are needed. In particular, the following lemma allows us to decompose the variances and to use conditional terms for the calculation.

\begin{LeA}\label{bVarianz}
Let $X$ be a real random variable and denote by $\F$ a $\sigma$-field. Then it holds that 

\[\Var(X)=\E\left(\Var\left(X|\F\right)\right)+\Var\left(\E\left(X|\F\right)\right).\]
\end{LeA}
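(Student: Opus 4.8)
The plan is to prove the law of total variance by reducing everything to the two defining identities, namely the definition of (conditional) variance and the tower property of conditional expectation. Throughout I work under the implicit hypothesis that $X\in\mathcal{L}^2$, so that all the second moments and conditional moments appearing below exist and are finite; this is exactly the regularity that makes each term in the asserted decomposition well-defined. The strategy is purely computational: expand both summands on the right-hand side, and observe that two terms cancel, leaving $\Var(X)$.

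First I would recall the two elementary facts I intend to use. The definition of conditional variance gives, almost surely,
\[
\Var(X\mid\F)=\E(X^2\mid\F)-\bigl(\E(X\mid\F)\bigr)^2,
\]
and the tower property (iterated expectations) gives $\E\bigl(\E(X\mid\F)\bigr)=\E(X)$ as well as $\E\bigl(\E(X^2\mid\F)\bigr)=\E(X^2)$. Taking expectations in the display above and using the tower property on the first term yields
\[
\E\bigl(\Var(X\mid\F)\bigr)=\E(X^2)-\E\Bigl[\bigl(\E(X\mid\F)\bigr)^2\Bigr].
\]
Separately, applying the plain definition of variance to the random variable $\E(X\mid\F)$ and again the tower property gives
\[
\Var\bigl(\E(X\mid\F)\bigr)=\E\Bigl[\bigl(\E(X\mid\F)\bigr)^2\Bigr]-\bigl(\E(X)\bigr)^2.
\]

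The final step is simply to add the last two displays: the terms $\E\bigl[(\E(X\mid\F))^2\bigr]$ cancel, and what remains is $\E(X^2)-(\E(X))^2=\Var(X)$, which is the claim. I do not expect any genuine obstacle here, since the identity is a standard one; the only point requiring mild care is bookkeeping of the square-integrability assumption, which guarantees that $\E(X\mid\F)\in\mathcal{L}^2$ and hence that $\Var(\E(X\mid\F))$ and $\E(\Var(X\mid\F))$ are both finite, so that rearranging and cancelling the middle terms is legitimate.
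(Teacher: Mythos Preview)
Your proof is correct and follows essentially the same approach as the paper: expand $\E(\Var(X\mid\F))$ and $\Var(\E(X\mid\F))$ via the definition of (conditional) variance and the tower property, then add and cancel the common term $\E[(\E(X\mid\F))^2]$. The only difference is that you make the $\mathcal{L}^2$ assumption explicit, which the paper leaves tacit.
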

\begin{proof}With the rules for conditional expectations we calculate
\[\begin{array}{ll}\E\left(\Var\left(X|\F\right)\right)&=\E\left(\E\left(X^2|\F\right)\right)-\E\left(\left[\E\left(X|\F\right)\right]^2\right)=\E\left(X^2\right)-\E\left(\left[\E\left(X|\F\right)\right]^2\right),\\
\Var\left(\E\left(X|\F\right)\right)&=\E\left(\left[\E\left(X|\F\right)\right]^2\right)-\left[\E\left(\E\left(X|\F\right)\right)\right]^2=\E\left(\left[\E\left(X|\F\right)\right]^2\right)-\left[\E\left(X\right)\right]^2.\end{array}\]
The result follows by sum up this both parts.
\end{proof}\vspace{\baselineskip}

We will apply the result for certain amounts (i.e. numbers) of pairs below. 
There, for each $i=1,\dots,a$ and $b=1,\dots,B$ we independently draw random subsamples $\{\sigma_{1i}(b),\dots,\sigma_{mi}(b)\}$ of length $m$ from $\{1,\dots,n_i\}$ and store them in a joint random vector $\vsigma(b,m) = (\vsigma_{1}(b,m),\dots,\vsigma_{a}(b,m))= (\sigma_{11}(b),\dots,\sigma_{ma}(b))$. Besides we define $\N_k=\{1,\dots,k\}$.

 \begin{LeA}\label{Menge}

Let $M(B,\vsigma(b,m))$ be the amount of pairs $(k,\ell)\in \N_B\times \N_B$, which fulfill  $\vsigma(k,m)$ and $\vsigma(\ell,m)$ have totally different elements and analogue $M(B,\vsigma_{i}(b,m))$. As long as $m\leq n_i$ for all $i \in\N_a$, it holds

 \[\frac{\E\left(| \N_B\times \N_B \setminus M(B,\vsigma(b,m))|\right)}{B^2}=1-\left(1-\frac{1}{B}\right)\cdot \prod\limits_{i=1}^a\frac{ \binom{n_i-m} {m}}{\binom{n_i} {m}}\]
 and for $m\leq n_i$

 \[\frac{\E\left(| \N_B\times \N_B \setminus M(B,\vsigma_{i}(b,m))|\right)}{B^2}=1-\left(1-\frac{1}{B}\right)\cdot \frac{ \binom{n_i-m} {m}}{\binom{n_i} {m}}\]

where $|\cdot|$ denotes the number of elements.\\

Let $M(B,(\vsigma_{i}(b,m),\vsigma_{r}(b,m)))$ be the amount of pairs $(k,\ell)\in \N_B\times \N_B$ fulfilling $\vsigma_{i}(k,m)$ and $\vsigma_{i}(\ell,m)$ and moreover $\vsigma_{r}(k,m)$ and $\vsigma_{r}(\ell,m)$  have totally different elements. If $m\leq n_i$ it holds

 \[\frac{\E\left(| \N_B\times \N_B \setminus M(B,(\vsigma_{i}(b,m),\vsigma_{r}(b,m)))|\right)}{B^2}=1-\left(1-\frac{1}{B}\right)\cdot \frac{ \binom{n_i-m} {m}}{\binom{n_i} {m}}\cdot \frac{ \binom{n_r-m} {m}}{\binom{n_r} {m}}.\]

%
%
%
%

 \end{LeA}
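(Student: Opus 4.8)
The plan is to write each of the three cardinalities as a double sum of indicator random variables over index pairs $(k,\ell)\in\N_B\times\N_B$ and to compute its expectation by linearity, so that everything reduces to evaluating, for a single pair, the probability that the two drawn subsamples are disjoint.

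First I would establish the product identity. Write
\[
\bigl|\N_B\times\N_B\setminus M(B,\vsigma(b,m))\bigr|=\sum_{k=1}^B\sum_{\ell=1}^B \mathbf{1}\{\vsigma(k,m)\text{ and }\vsigma(\ell,m)\text{ are not totally different}\}.
\]
Taking expectations and using linearity, I would split the $B^2$ pairs into the $B$ diagonal pairs $(k,k)$ and the $B(B-1)$ off-diagonal pairs. On the diagonal the two vectors coincide, hence share all $m\geq 1$ indices in every group, so each diagonal indicator equals $1$ deterministically. For $k\neq\ell$ the draws are independent across $b$ and, by construction, independent across the $a$ groups; since indices belonging to different groups can never coincide, the event that $\vsigma(k,m)$ and $\vsigma(\ell,m)$ are totally different factorizes into the intersection over $i$ of the independent events $\vsigma_i(k,m)\cap\vsigma_i(\ell,m)=\emptyset$.

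Second, I would compute the single per-group disjointness probability by a direct counting argument: conditionally on $\vsigma_i(k,m)$, the independent subsample $\vsigma_i(\ell,m)$ is a uniformly chosen $m$-subset of $\{1,\dots,n_i\}$, and it avoids the $m$ fixed indices precisely when it is an $m$-subset of the remaining $n_i-m$ indices, giving
\[
\PP\bigl(\vsigma_i(k,m)\cap\vsigma_i(\ell,m)=\emptyset\bigr)=\frac{\binom{n_i-m}{m}}{\binom{n_i}{m}}.
\]
This automatically covers the degenerate regime $m\leq n_i<2m$, where two $m$-subsets cannot be disjoint and the numerator $\binom{n_i-m}{m}$ vanishes. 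Hence for $k\neq\ell$ the probability of being totally different is $\prod_{i=1}^a \binom{n_i-m}{m}\big/\binom{n_i}{m}$, and the probability of the complementary (``not totally different'') event is one minus this quantity.

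Finally, summing the $B$ deterministic diagonal terms and the $B(B-1)$ off-diagonal contributions and dividing by $B^2$ yields
\[
\frac{\E\bigl(|\N_B\times\N_B\setminus M(B,\vsigma(b,m))|\bigr)}{B^2}
=\frac{B+B(B-1)\Bigl(1-\prod_{i=1}^a \tfrac{\binom{n_i-m}{m}}{\binom{n_i}{m}}\Bigr)}{B^2},
\]
which simplifies to $1-(1-1/B)\prod_{i=1}^a \binom{n_i-m}{m}\big/\binom{n_i}{m}$, the claimed identity. The single-group formula is the special case in which only the $i$-th group enters the disjointness requirement, so the product collapses to its single factor, and the two-group formula is identical except that the factorized event now involves the two independent groups $i$ and $r$, yielding a product of exactly two factors. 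The only genuine point in the argument is the factorization step: it rests on the mutual independence of all the draws (across repetitions $b$ and across groups) together with the observation that distinct groups use disjoint label sets; the remainder is routine bookkeeping of the diagonal and off-diagonal counts.
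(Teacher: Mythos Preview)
Your proof is correct and follows essentially the same approach as the paper: both separate the $B$ diagonal pairs (which never lie in $M$) from the $B(B-1)$ off-diagonal pairs, use independence across groups to factorize the disjointness probability as $\prod_{i=1}^a \binom{n_i-m}{m}/\binom{n_i}{m}$, and then assemble the counts. The only cosmetic difference is that the paper computes $\E(|M|)$ first and subtracts from $B^2$, whereas you sum indicators for the complement directly; the arithmetic is identical.
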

\begin{proof} 
Because $M(B,\vsigma(b,m))$  never contains pairs of the kind (k,k) the maximal number of elements is $B^2-B$. The fact that two vectors $\va,\vb\in\R^n$ have no element in common, even at different components, is denoted as  $\va{\neq}! \vb$. \\
The number of totally different pairs can be seen as a binomial distribution with $B^2-B$ elements, and to calculate the necessary probability independence is used. With the fact that all combinations in this situation have the same probability it follows that 

\[\begin{array}{ll}
&\PP\left(\vsigma(k,m)\neq !\vsigma(\ell,m)\right)=
\PP\left(\bigcap\limits_{i=1}^a \left(\vsigma_{i}(k,m){\neq}! \hspace{0.1cm}\vsigma_{i}(\ell,m)\right)  \right)
\\[1.5ex]
=&\prod\limits_{i=1}^a\PP\left(\vsigma_{i}(k,m){\neq}! \hspace{0.1cm}\vsigma_{i}(\ell,m)\right) 
=\prod\limits_{i=1}^a\frac{\binom{n_i} {m}\Cdot \binom{n_i-m} {m}}{\binom{n_i} {m}^2}=\prod\limits_{i=1}^a\frac{ \binom{n_i-m} {m}}{\binom{n_i} {m}}.
\end{array}\]
If two times $m$ elements are picked from $\N_{n_i}$ there are $\binom{n_i} {m}^2$ possibilities, where in $\binom{n_i} {m}\Cdot \binom{n_i-m} {m}$ of them both $m$-tuples are totally different. This leads to the stated probability and with the mean of the binomial distribution we get

\[\E\left(|M(B,\vsigma(b,m))|)\right)=(B^2-B)\cdot  \prod\limits_{i=1}^a\frac{ \binom{n_i-m} {m}}{\binom{n_i} {m}}.\]\\
All in all we calculate
 \[\begin{array}{ll}&\frac{\E\left(| \N_B\times \N_B \setminus M(B,\vsigma(b,m))|\right)}{B^2}=\frac{| \N_B\times \N_B|- \E\left(|M(B,\vsigma(b,m))|\right)}{B^2}
 =1-\left(1-\frac{1}{B}\right)\cdot \prod\limits_{i=1}^a\frac{ \binom{n_i-m} {m}}{\binom{n_i} {m}}.\end{array} \]

For $M(B,(\vsigma_{i}(b,m),\vsigma_{r}(b,m)))$ and $M(B,\vsigma_{i}(b,m))$ less multiplications are needed, so the results follow. 
\end{proof}\vspace{\baselineskip}
  If $B(N)\to \infty$ (for example B could be chosen proportional to N) these terms converge to zero, disregarding the number of groups or of m.

\subsection{Proofs of Section~\ref{The Test Statistic and its Asymptotics}}

\begin{proof}[Proof of \Crp{Theorem3}]
The proof of this lemma is very similar to the one from \cite{Paper1}[Theorem 2.1]. Due to the fact that a finite sum of multivariate normally distributed random variables is again multivariate normally distributed, the representation theorem can be used to (distributionally equivalently) express the quadratic form as $W_N=\sum_{s=1}^{ad} \frac{\lambda_s}{\sqrt{\sum_{\ell=1}^{ad} \lambda_\ell^2}}\left(\frac{C_s-1}{\sqrt{2}}\right)$.\\

The only differences to \cite{Paper1}[Theorem 2.1] are that in the case of more groups the eigenvalues do not only depend on $d$ but also on the $n_i$ and $a$ and that there are more terms to sum. The first point has only an influence on the limit of the $\beta_s$. The higher number of summands does not matter because we observe the asymptotic under the asymptotic frameworks \eqref{eq: as frame 2}-\eqref{eq: as frame 3}, for which at least $a$ or $d$ converge to infinity.  The proofs from \cite{Paper1}[Theorem 2.1] only need the representation from above, a number of summations which goes to infinity and the conditions on the limits of the $\beta_s$. Since these are fulfilled the proof can be conducted in the same way.
\end{proof}

\begin{proof}[Proof of \Crp{Schae1}]
Remember that with ${\vY}_{i,\ell,k}:=\vX_{i,\ell}-\vX_{i,k}$ and $i\neq r\in\N_a$, $a>1$ trace estimators were defined by
\\\\ 
$\begin{array}{l}
A_{i,1}\hspace{0.2cm}=\frac 1 {2\cdot \binom {n_i} {2}} \sum\limits_{\begin{footnotesize}\substack{\ell_1,\ell_2=1\\ \ell_1>\ell_2}\end{footnotesize}}^{n_i} \left({\vX}_{i,\ell_1}-{\vX}_{i,\ell_2}\right)^\top \vT_S\left({\vX}_{i,\ell_1}-{\vX}_{i,\ell_2}\right),
\end{array}\\\\
\begin{array}{l}
A_{i,r,2}=\frac  1 {4\cdot\binom {n_i} {2}\binom {n_r} {2}} {\sum\limits_{\begin{footnotesize}\substack{\ell_1,\ell_2=1\\ \ell_1>\ell_2}\end{footnotesize}}^{n_i}\sum\limits_{\begin{footnotesize}\substack{k_1,k_2=1\\ k_1>k_2}\end{footnotesize}}^{n_r}\left[\left({\vX}_{i,\ell_1}-{\vX}_{i,\ell_2}\right)^\top \vT_S\left({\vX}_{r,k_1}-{\vX}_{r,k_2}\right)\right]^2},
\end{array}\\\\
\begin{array}{l}
A_{i,3}\hspace{0.2cm}=\frac 1 {4\cdot6\binom {n_i} {4}} \sum\limits_{\begin{footnotesize}\substack{\ell_1,\ell_2=1\\ \ell_1>\ell_2}\end{footnotesize}}^{n_i}
\sum\limits_{\begin{footnotesize}\substack{k_2=1\\k_2\neq \ell_1\neq \ell_2 }\end{footnotesize}}^{n_i}\sum\limits_{\begin{footnotesize}\substack{k_1=1\\ \ell_2\neq \ell_1\neq k_1>k_2}\end{footnotesize}}^{n_i}
\left[\left({\vX}_{i,\ell_1}-{\vX}_{i,\ell_2}\right)^\top \vT_S\left({\vX}_{i,k_1}-{\vX}_{i,k_2}\right)\right]^2,
\end{array}\\\\
\begin{array}{l}
A_4\hspace{0.4cm}=\sum_{i=1}^a \left(\frac{N}{n_i }\right)^2 {(\vT_W)_{ii}}^2 A_{i,3}+2\sum_{i=1}^a\sum_{r=1 , r < i}^a \frac{N^2}{n_in_r } {(\vT_W)_{ir}}^2 A_{i,r,2}.
\end{array}$\vspace{2\baselineskip}

For $\ell\neq k$ we know ${\vY}_{i,\ell,k}\sim\mathcal{N}\left(\boldsymbol{0}_d,2  \vSigma_i\right)$ and for totally different indices the ${\vY}_{i,\ell,k}$ are  statistically independent. So the previous lemmata can be used to calculate the moments.
The unbiasedness can be shown by calculating the expectation values for each estimator
{\[\begin{array}{ll}\E\left(A_{i,1}\right)&=\frac 1 {2 \cdot \binom {n_i} {2}}\sum\limits_{\begin{footnotesize}\substack{\ell_1,\ell_2=1\\ \ell_1>\ell_2}\end{footnotesize}}^{n_i} \E\left[{{\vY}_{i,\ell_1,\ell_2}}^\top \vT_S{\vY}_{i,\ell_1,\ell_2}\right]\stackrel{\ref{QF4}}{=}
\tr\left(\vT_S\vSigma_i\right).\end{array}\]}
The following argument will be used several times in this work with small differences, so incidentally it will be more detailed.\\

 {To check the variance  we recognize first that  $\Cov\left[{{\vY}_{i,\ell_1,\ell_2}}^\top \vT_S{\vY}_{i,\ell_1,\ell_2}\large{\textbf{;}}\normalsize
{{\vY}_{i,\ell_1',\ell_2'}}^\top \vT_S{\vY}_{i,\ell_1',\ell_2'}\right]$ is 0 if all indices are totally different, so just $\binom {n_i}{2}\left(\binom{n_i} 2-\binom{n_i-2} 2\right) $ combinations  remain.}
Instead of calculating the covariances of the remaining quadratic forms it is easier to use lemmata from above.
By using the fact that all quadratic forms are identically distributed, we can calculate the  variances which are all the same so it is just the number of remaining combinations multiplied with the variances. This leads to:\\

$\begin{array}{ll}
\Var\left(A_{i,1}\right)&=\frac 1 {4 \cdot \binom {n_i} {2}^2}\sum\limits_{\begin{footnotesize}\substack{\ell_1,\ell_2=1\\ \ell_1>\ell_2}\end{footnotesize}}^{n_i}\sum\limits_{\begin{footnotesize}\substack{\ell_1',\ell_2'=1\\ \ell_1'>\ell_2'}\end{footnotesize}}^{n_i} \Cov\left[{{\vY}_{i,\ell_1,\ell_2}}^\top \vT_S{\vY}_{i,\ell_1,\ell_2}\hspace*{0.05cm}\large{\textbf{;}}\normalsize\hspace*{0.05cm}{{\vY}_{i,\ell_1',\ell_2'}}^\top \vT_S{\vY}_{i,\ell_1',\ell_2'}\right]
\\&
\hspace{-0.07cm}\stackrel{\ref{Var1}}{\leq} \frac{\binom{n_i}{2}-\binom{n_i-2}{2}}{4\binom{n_i}{2}} \Var\left[{{\vY}_{i,1,2}}^\top \vT_S{\vY}_{i,1,2}\right]+\frac{\binom{n_i-2} 2}{4\binom{n_i}{2}} \cdot 0
\\[1.5ex]
&\hspace{-0.075cm}\stackrel{\ref{QF4}}{=}\frac{\binom{n_i}{2}-\binom{n_i-2}{2}}{4\binom{n_i}{2}}{\0\left( \tr^2\left(2\vT_S\vSigma_i\right)\right)} 
\\[2ex]&=\0\left(n_i^{-1}\right)\cdot \0\left( \tr^2\left(\vT_S\vSigma_i\right)\right).
\end{array}$\\\\\\
With these values we know for $\vV_N=\bigoplus_{i=1}^a \frac{N}{n_i}\vSigma_i$ that\\

$\begin{array}{ll}
\E\left(\sum\limits_{i=1}^a\frac{N}{n_i} {(\vT_W)_{ii}} A_{i,1}\right)=\sum\limits_{i=1}^a\frac{N}{n_i} {(\vT_W)_{ii}} \E\left(A_{i,1}\right)=\tr\left(\vT\vV_N\right)
\end{array}$\\\\
and\\

$\begin{array}{ll}
\Var\left(\frac{\sum\limits_{i=1}^a \frac{N}{n_i}  {(\vT_W)_{ii}} A_{i,1}}{\E\left(\sum\limits_{i=1}^a \frac{N}{n_i} {(\vT_W)_{ii}}  A_{i,1}\right)}\right)&=\frac{\sum\limits_{i=1}^a \frac{N^2}{n_i^2} {(\vT_W)_{ii}}^2 \Var(A_{i,1})}{\tr^2\left(\vT\vV_N\right)}
\\&
\leq\frac{\sum\limits_{i=1}^a \0\left(n_i^{-1}\right)\cdot \0\left( \tr^2\left(\frac{N}{n_i} {(\vT_W)_{ii}} \vT_S\vSigma_i\right)\right)}{\tr^2\left(\vT\vV_N\right)} 
\end{array}$\\
$\begin{array}{ll}
\textcolor{white}{\Var\left(\frac{\sum\limits_{i=1}^a \frac{N}{n_i}  {(\vT_W)_{ii}} A_{i,1}}{\E\left(\sum\limits_{i=1}^a \frac{N}{n_i} {(\vT_W)_{ii}}  A_{i,1}\right)}\right)}
&
\leq\frac{\0\left(\frac{1}{n_{\min}}\right)\cdot \0\left( \sum\limits_{i=1}^a \tr^2\left(\frac{N}{n_i} {(\vT_W)_{ii}} \vT_S\vSigma_i\right)\right)}{\tr^2\left(\vT\vV_N\right)}\end{array}$ \\
$\begin{array}{ll}
\textcolor{white}{\Var\left(\frac{\sum\limits_{i=1}^a \frac{N}{n_i}  {(\vT_W)_{ii}} A_{i,1}}{\E\left(\sum\limits_{i=1}^a \frac{N}{n_i} {(\vT_W)_{ii}}  A_{i,1}\right)}\right)}&\leq\frac{\0\left(\frac{1}{n_{\min}}\right)\cdot \0\left( \tr^2\left(\sum\limits_{i=1}^a \frac{N}{n_i} {(\vT_W)_{ii}} \vT_S\vSigma_i\right)\right)}{\tr^2\left(\vT\vV_N\right)} 
=\0\left(\frac{1}{n_{\min}}\right).
\end{array}$\\
So the conditions for an unbiased and ratio-consistent estimator are fulfilled.\\\\
The same steps with a different number of remaining combinations leads to\\

$\begin{array}{ll}
\E\left(A_{i,3}\right)&={\frac 1 {4\cdot6\binom {n_i} {4}} \sum\limits_{\begin{footnotesize}\substack{\ell_1,\ell_2=1\\ \ell_1>\ell_2}\end{footnotesize}}^{n_i}
\sum\limits_{\begin{footnotesize}\substack{k_1,k_2=1\\ \ell_2\neq \ell_1\neq k_1>k_2\neq \ell_1\neq \ell_2 }\end{footnotesize}}^{n_i}\E\left(\left[{{\vY}_{i,\ell_1,\ell_2}}^\top \vT_S{\vY}_{i,k_1,k_2}\right]^2\right)}
\\
&\hspace{-0.075cm}\stackrel{\ref{QF4}}{=}
\frac  1 {4\cdot6\binom {n_i} {4}} \Cdot  {6\binom {n_i} {4}}\cdot\tr\left(4\Cdot \left(\vT_S\vSigma_i\right)^2 \right)=\tr\left(\left(\vT_S\vSigma_i\right)^2\right),
\end{array}$\\\\\\

$\begin{array}{ll}\Var\left({A_{i,3}}\right)
&=\sum\limits_{\begin{footnotesize}\substack{\ell_1,\ell_2=1\\ \ell_1>\ell_2}\end{footnotesize}}^{n_i}
\sum\limits_{\begin{footnotesize}\substack{k_1,k_2=1\\ \ell_2\neq \ell_1\neq k_1>k_2\neq \ell_1\neq \ell_2 }\end{footnotesize}}^{n_i}
\sum\limits_{\begin{footnotesize}\substack{\ell_1',\ell_2'=1\\ \ell_1'>\ell_2'}\end{footnotesize}}^{n_i}
\sum\limits_{\begin{footnotesize}\substack{k_1',k_2'=1\\ \ell_2'\neq \ell_1'\neq k_1'>k_2'\neq \ell_1'\neq \ell_2' }\end{footnotesize}}^{n_i}\frac    {\Cov\left(\left[{{\vY}_{i,\ell_1,\ell_2}}^\top \vT_S{\vY}_{i,k_1,k_2}\right]^2\hspace*{0.05cm}\sem\hspace*{0.05cm}\left[{{\vY}_{i,\ell_1',\ell_2'}}^\top \vT_S{\vY}_{i,k_1',k_2'}\right]^2\right)}{4^2\cdot 6^2\cdot\binom {n_i} {4}^2 }
\\[4ex]
&\hspace{-0.07cm}\stackrel{\ref{Var1}}{\leq}\frac  {6 \binom {n_i} 4-6\binom{n_i-4}{4}}  {4^2\cdot 6\cdot\binom {n_i} {4} }\Var\left(\left[{{\vY}_{i,1,2}}^\top \vT_S{\vY}_{i,3,4}\right]^2\right)
\\[2ex]&
\hspace{-0.075cm}\stackrel{\ref{QF4}}{=}\frac  {\binom {n_i} 4-\binom{n_i-4}{4}} {16\binom {n_i} {4} }  \0\left(\tr^2\left(\left(\vT_S\vSigma_i\right)^2\right)\right) \\[2ex]&=\0\left(n_i^{-1}\right) \cdot \0\left(\tr^2\left(\left(\vT_S\vSigma_i\right)^2\right)\right),
\end{array}$\\\\\\

$\begin{array}{ll}
\E\left(A_{i,r,2}\right)&=\frac  1 {4\cdot\binom {n_i} {2}\binom {n_r} {2}} {\sum\limits_{\begin{footnotesize}\substack{\ell_1,\ell_2=1\\ \ell_1>\ell_2}\end{footnotesize}}^{n_i}\sum\limits_{\begin{footnotesize}\substack{k_1,k_2=1\\ k_1>k_2}\end{footnotesize}}^{n_r}\E\left(\left[{{\vY}_{i,\ell_1,\ell_2}}^\top \vT_S
{\vY}_{r,k_1,k_2}\right]^2\right)}
\\
&\hspace{-0.075cm}\stackrel{\ref{QF4}}{=}
\frac  1 {4\cdot\binom {n_i} {2}\binom {n_r} {2}}  \Cdot  \binom {n_i} {2}\Cdot  \binom {n_r} {2}\cdot\tr\left(4\Cdot \vT_S\vSigma_i \vT_S\vSigma_r\right)=\tr\left(\vT_S\vSigma_i\vT_S\vSigma_r\right)
,\end{array}$\\\\\\

$\begin{array}{ll}
\Var\left(\frac {2 N^2}{n_i n_r} A_{i,r,2}\right)&=\frac {4 N^4}{n_i^2 n_r^2}   \sum\limits_{\begin{footnotesize}\substack{\ell_1,\ell_2=1\\ \ell_1>\ell_2}\end{footnotesize}}^{n_1}\sum\limits_{\begin{footnotesize}\substack{k_1,k_2=1\\ k_1>k_2}\end{footnotesize}}^{n_2}\sum\limits_{\begin{footnotesize}\substack{\ell_1',\ell_2'=1\\ \ell_1'>\ell_2'}\end{footnotesize}}^{n_i}\sum\limits_{\begin{footnotesize}\substack{k_1',k_2'=1\\ k_1'>k_2'}\end{footnotesize}}^{n_r}\frac{\Cov\left(\left[{{\vY}_{i,\ell_1,\ell_2}}^\top \vT_S{\vY}_{r,k_1,k_2}\right]^2\hspace*{0.05cm}\sem\hspace*{0.05cm}\left[{{\vY}_{i,\ell_1',\ell_2'}}^\top \vT_S\hspace{0.025cm} {\vY}_{r,k_1',k_2'}\right]^2\right)} {16\cdot\binom {n_i} {2}^2\binom {n_r} {2}^2 } 
\\[3ex]
&\hspace{-0.07cm}\stackrel{\ref{Var1}}{\leq}\frac {4 N^4}{n_i^2 n_r^2} \frac  {\binom{n_i}{2}\binom{n_r} {2}-\binom{n_i-2}{2}\binom{n_r-2}{2}} {16\cdot\binom {n_i} {2}\binom {n_r} {2} } \Var\left(\left[{{\vY}_{i,1,2}}^\top \vT_S\hspace{0.025cm} {\vY}_{r,1,2}\right]^2\right)
\end{array}$\\\\
$\begin{array}{ll}
\textcolor{white}{\Var\left(\frac {2 N^2}{n_i n_r} A_{i,r,2}\right)}
&\hspace{-0.075cm}\stackrel{\ref{QF4}}{\leq} 
\frac  {\binom{n_i}{2}\binom{n_r} {2}-\binom{n_i-2}{2}\binom{n_r-2}{2}} {\binom {n_i} {2}\binom {n_r} {2} }\cdot \0\left(  \tr^2\left(\frac{N}{n_i}\vT_S\vSigma_i \frac{N}{n_r}\vT_S\vSigma_r\right)\right)
\\[1.5ex]&\leq 
\0\left(\frac{1}{n_{\min}}\right)\cdot\0\left(  \tr^2\left(\frac{N}{n_i}\vT_S\vSigma_i \frac{N}{n_r}\vT_S\vSigma_r\right)\right)
.\end{array}$\\\\

Finally, the conditions for $A_4$ have to be checked. 
With the expectation values from above we calculate\\\\
$\begin{array}{ll}
\E\left(A_4\right)&=\sum\limits_{i=1}^a\frac{N^2}{n_i^2}{(\vT_W)_{ii}}^2\E(A_{i,3})+2\sum\limits_{i=1}^a\sum\limits_{r=1,r<i}^a \frac{N^2}{n_in_r}{(\vT_W)_{ir}}^2\E\left(A_{i,r,2}\right)
\\[1.5ex]&=\sum\limits_{i=1}^a\frac{N^2}{n_i^2}{(\vT_W)_{ii}}^2\tr\left(\left(\vT_S\vSigma_i\right)^2\right)+2\sum\limits_{i=1}^a\sum\limits_{r=1,r<i}^a \frac{N^2}{n_in_r}{(\vT_W)_{ir}}^2\tr\left(\vT_S\vSigma_i\vT_S\vSigma_r\right)
=\tr\left(\left(\vT\vV_N \right)^2\right).
\end{array}$\\\\\\
To calculate the variances the following additional inequalities are needed:\\

$\begin{array}{ll}\frac{\Var\left(\sum\limits_{i=1}^a \left(\frac{N}{n_i}\right)^2{(\vT_W)_{ii}}^2 A_{i,3}\right)}{\tr^2\left(\left(\vT\vV_N\right)^2\right)}&=\frac{\sum\limits_{i=1}^a\Var\left(\left(\frac{N}{n_i}\right)^2{(\vT_W)_{ii}}^2  A_{i,3}\right)}{\tr^2\left(\left(\vT\vV_N\right)^2\right)}
\\
&\leq \sum\limits_{i=1}^a \0\left(n_i^{-1}\right) \cdot \frac{\0\left({(\vT_W)_{ii}}^4\tr^2\left(\left(\vT_S\frac{N}{n_i}\vSigma_i\right)^2\right)\right)}{\tr^2\left(\left(\vT\vV_N\right)^2\right)}
\\
&\leq\0\left(\frac{1}{n_{\min}}\right) \frac{\0\left(\tr^2\left(\sum\limits_{i=1}^a{(\vT_W)_{ii}}^2\left(\vT_S\frac{N}{n_i}\vSigma_i\right)^2\right)\right)}{\tr^2\left(\left(\vT\vV_N\right)^2\right)}
=\0\left(\frac{1}{n_{\min}} \right)\end{array}$\\\\
and\\

$\begin{array}{ll}&\frac{\Var\left(2\sum\limits_{r < i\in\N_a} \frac{N^2}{n_in_r }{(\vT_W)_{ir}}^2A_{i,r,2}\right)}{\tr^2\left(\left(\vT\vV_N\right)^2\right)}
\\[1.5ex]
\stackrel{\ref{Var1}}{\leq}& 4\sum\limits_{i<r\in \N_a}\sum\limits_{h<g\in \N_a}\frac{\sqrt{\Var\left(\frac {N^2}{n_{i}n_{r}}{(\vT_W)_{ir}}A_{i,r,2}\right)}\sqrt{\Var\left(\frac {N^2}{n_{h}n_{g}}{(\vT_W)_{gh}}A_{h,g,2}\right)}}{\tr^2\left(\left(\vT\vV_N\right)^2\right)}
\\[2ex]
\end{array}$\\
$\begin{array}{ll}
\leq&\left(\sum\limits_{i\neq r\in \N_a}\frac{\sqrt{\0\left(\frac 1 {n_{\min}}\right)}{(\vT_W)_{ir}}^2\tr \left(\vT_S\frac N {n_i}\vSigma_i\vT_S\frac{N}{n_r}\vSigma_r\right)}{\tr\left(\left(\vT\vV_N\right)^2\right)}\right)^2
\\[2ex]
\leq&\0\left(\frac 1 {n_{\min}}\right)\left(\frac{\0\left(\sum\limits_{i\neq r\in \N_a}{(\vT_W)_{ir}}^2\tr \left(\vT_S\frac N {n_i}\vSigma_i \vT_S\frac{N}{n_r}\vSigma_r\right)\right)}{\sum\limits_{i,r\in\N_a}{(\vT_W)_{ir}}^2\tr\left(\vT_S\frac{N}{n_i}\vSigma_i\frac{N}{n_r}\vT_S\vSigma_r\right)}\right)^2\leq \0\left(\frac 1 {n_{\min}}\right)
.\end{array}$\\\\\\
Together this leads to\\

$\begin{array}{ll}\Var\left(\frac{A_4}{\tr\left(\left(\vT\vV_N\right)^2\right)}\right)&

\stackrel{\ref{Var1}}{\leq}\left[\sqrt{\frac{\Var\left(2\sum\limits_{r < i\in \N_a} \frac{N^2}{n_in_r }{(\vT_W)_{ir}}^2A_{i,r,2}\right)}{\tr^2\left(\left(\vT\vV_N\right)^2\right)}}+\sqrt{\frac{\Var\left(\sum\limits_{i=1}^a\frac{N}{n_i} {(\vT_W)_{ii}}^2 A_{i,3}\right)}{\tr^2\left(\left(\vT\vV_N\right)^2\right)}}\right]^2\\
&\left[\sqrt{\0\left(\frac{1}{n_{\min}}\right)}+ \sqrt{\0\left(\frac{1}{n_{\min}}\right)}\right]^2=\0\left(\frac{1}{n_{\min}}\right)
\end{array}$\\\\
 
 and therefore $ A_4$ is an unbiased and ratio-consistent estimator of $\tr\left(\left(\vT\vV_N\right)^2\right)$.\\
 Moreover, we want to stress that the zero sequences used as upper border for $\widehat{E_{H_0}}(Q_N)$ and $A_4$ do not depend on the number of groups or dimensions, so this estimators can be also used for increasing number of groups.
 \\
 
 With the expectation values and variances from the beginning it follows directly that $A_{i,1},A_{i,r,2},A_{i,3}$ are unbiased, ratio-consistent  estimators of $\tr(\vT_S\vSigma_i),\tr\left(\vT_S\vSigma_i\vT_S\vSigma_r\right)$ and $\tr\left(\left(\vT_S\vSigma_i\right)^2\right)$.\\\

 It is worth to note that all of this estimators also consistent estimators which are even dimension-stable in the sense of \cite{Brunner}.
\end{proof}\vspace{\baselineskip}
For  $A_{i,r,2}$ there exists a alternative form which can be implemented substantially more efficient and was considered in \cite{Brunner2012}. It is based on matrices of the form $\widehat \vM_{i,r}=\vP_{n_i}\left(\vT_S{\vX}_{i,1},\dots,\vT_S{\vX}_{i,n_i}\right)^\top\Cdot \left(\vT_S{\vX}_{r,1},\dots,\vT_S{\vX}_{r,n_r}\right)\vP_{n_r}^\top$. Recalling that $\textbf{1}_{n}$ is the vector of ones and $\#$ denotes the Hadamard-Schur-Product, it can be seen that
 \[A_{i,r,2}=\frac{{\textbf{1}_{n_i}}^\top \left(\widehat{\vM_{i,r}}\#\widehat{\vM_{i,r}}\right)\textbf{1}_{n_i}}{(n_i-1)(n_r-1)}.\]\\

For $A_{i,3}$  there also exists an alternative formula, which expands much longer, but is more efficient:
\[\begin{array}{ll}
A_{i,3}=&\sum\limits_{\begin{footnotesize}\substack{\ell_1,\ell_2=1\\ \ell_1\neq \ell_2}\end{footnotesize}}^{n_i}\frac{\left[{{\vX}_{i,\ell_1}}^\top \vT_S{{\vX}_{i,\ell_2}}\right]^2}{n_i(n_i-1)}-(2n_i+5)\sum\limits_{\begin{footnotesize}\substack{\ell_1,\ell_2,\ell_3=1\\ \ell_1\neq \ell_2\neq \ell_3}\end{footnotesize}}^{n_i}\frac{\left[{\vX}_{i,\ell_1}^\top \vT_S{\vX}_{i,\ell_2} {\vX}_{i,\ell_1}^\top \vT_S{\vX}_{i,\ell_3}\right]}{n_i(n_i-1)(n_i-2)(n_i-3)}
\\[2ex]
&-\sum\limits_{\begin{footnotesize}\substack{\ell_1,\ell_2,\ell_3=1\\ \ell_3\neq \ell_1,\ell_2}\end{footnotesize}}^{n_i}\frac{\left[{\vX}_{i,\ell_1}^\top \vT_S{\vX}_{i,\ell_3} {\vX}_{i,\ell_2}^\top \vT_S({\vX}_{i,\ell_3}+{\vX}_{i,\ell_1})\right]}{n_i(n_i-1)(n_i-2)(n_i-3)}+\sum\limits_{\begin{footnotesize}\substack{\ell_1,\ell_2,\ell_3=1\\ \ell_1\neq \ell_2\neq \ell_3}\end{footnotesize}}^{n_i}\frac{\left[{\vX}_{i,\ell_1}^\top \vT_S{\vX}_{i,\ell_3}{\vX}_{i,\ell_2}^\top \vT_S{\vX}_{i,\ell_2}\right]}{n_i(n_i-1)(n_i-2)(n_i-3)}
\\[2ex]
&-
\sum\limits_{\begin{footnotesize}\substack{\ell_1,\ell_2,\ell_3=1\\ \ell_1\neq \ell_2}\end{footnotesize}}^{n_i}\frac{\left[{\vX}_{i,\ell_1}^\top \vT_S{\vX}_{i,\ell_2}{\vX}_{i,\ell_2}^\top \vT_S{\vX}_{i,\ell_3}\right]}{n_i(n_i-1)(n_i-2)(n_i-3)}-\frac{n_i^2 \left[\overline{{\vX}}_{i}^\top \vT_S\overline{{\vX}}_{i}\right]\left(n_i^2\overline{{\vX}}_{i}^\top \vT_S\overline{{\vX}}_{i}-\sum_{\ell_1=1}^{n_i}\left[{\vX}_{i,\ell_1}^\top \vT_s{\vX}_{i,\ell_1}\right]\right)}{n_i(n_i-1)(n_i-2)(n_i-3)}.
\end{array}\]

To finally prove \Crp{Theorem4} we need another lemma.

\begin{LeA}\label{Schae2}
For the previously defined estimators it holds for $n_{\min}\to \infty $ that
{\[\frac{\sum_{i=1}^a\frac{N}{n_i}{(\vT_W)_{ii}}A_{i,1}-\sum_{i=1}^a\frac{N}{n_i}{(\vT_W)_{ii}}\tr\left(\vT_S\vSigma_i\right)}{\sqrt{2\tr\left(\left(\vT\vV_N\right)^2\right)}} \hspace*{0.3cm}\stackrel{P}{\longrightarrow}0\hspace*{1.0cm} \text{independent of d or a}.\]}

\end{LeA}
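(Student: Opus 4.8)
The plan is to reduce the claim to a variance estimate and then invoke Chebyshev's inequality, exactly as in \ref{Kons2}. Write $S_N = \sum_{i=1}^a \frac{N}{n_i}(\vT_W)_{ii}A_{i,1}$. By the unbiasedness part of \ref{Schae1}, $\E(S_N) = \sum_{i=1}^a \frac{N}{n_i}(\vT_W)_{ii}\tr(\vT_S\vSigma_i)$, so the numerator of the ratio is already centred and it suffices to show that its variance, divided by $2\tr\big((\vT\vV_N)^2\big)$, tends to zero uniformly in $d$ and $a$. Since the $a$ groups are independent, the $A_{i,1}$ are independent and hence
\[
\Var(S_N) = \sum_{i=1}^a \frac{N^2}{n_i^2}(\vT_W)_{ii}^2 \Var(A_{i,1}).
\]

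The decisive step is to control $\Var(A_{i,1})$ by the \emph{second-order} trace $\tr\big((\vT_S\vSigma_i)^2\big)$ rather than the looser $\tr^2(\vT_S\vSigma_i)$ that appears in the proof of \ref{Schae1}. Writing $\vY_{i,\ell_1,\ell_2} = \vX_{i,\ell_1}-\vX_{i,\ell_2}\sim\mathcal{N}_d(\vnull_d,2\vSigma_i)$, a single summand of $A_{i,1}$ satisfies, by \ref{QF4}, $\Var(\vY_{i,1,2}^\top\vT_S\vY_{i,1,2}) = 2\tr\big((2\vT_S\vSigma_i)^2\big) = 8\tr\big((\vT_S\vSigma_i)^2\big)$, while summands with pairwise distinct indices are uncorrelated. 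Counting the $\binom{n_i}{2}\big(\binom{n_i}{2}-\binom{n_i-2}{2}\big)$ surviving index pairs and bounding each covariance by this variance via \ref{Var1} gives
\[
\Var(A_{i,1}) \leq \frac{\binom{n_i}{2}-\binom{n_i-2}{2}}{\binom{n_i}{2}}\, 2\tr\big((\vT_S\vSigma_i)^2\big) = \0\big(n_i^{-1}\big)\,\tr\big((\vT_S\vSigma_i)^2\big).
\]

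It then remains to bound the denominator from below. From \eqref{eq: VarQ} one has
\[
\tr\big((\vT\vV_N)^2\big) = \sum_{i=1}^a \frac{N^2}{n_i^2}(\vT_W)_{ii}^2\,\tr\big((\vT_S\vSigma_i)^2\big) + 2\sum_{i<r}\frac{N^2}{n_in_r}(\vT_W)_{ir}^2\,\tr(\vT_S\vSigma_i\vT_S\vSigma_r),
\]
where the cross terms are nonnegative since $\tr(\vT_S\vSigma_i\vT_S\vSigma_r)=\tr\big((\vT_S\vSigma_i\vT_S)(\vT_S\vSigma_r\vT_S)\big)\geq 0$ is the trace of a product of positive semidefinite matrices. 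Hence $\tr\big((\vT\vV_N)^2\big)\geq \sum_i \frac{N^2}{n_i^2}(\vT_W)_{ii}^2\tr\big((\vT_S\vSigma_i)^2\big)$, and combining this with the variance bound yields
\[
\frac{\Var(S_N)}{2\tr\big((\vT\vV_N)^2\big)} \leq \frac{\0(n_{\min}^{-1})\sum_{i=1}^a \frac{N^2}{n_i^2}(\vT_W)_{ii}^2\tr\big((\vT_S\vSigma_i)^2\big)}{2\sum_{i=1}^a \frac{N^2}{n_i^2}(\vT_W)_{ii}^2\tr\big((\vT_S\vSigma_i)^2\big)} = \0\big(n_{\min}^{-1}\big).
\]
Because this bound involves only $n_{\min}$, Chebyshev's inequality finishes the proof with convergence that does not depend on $d$ or $a$.

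The main obstacle is precisely the sharpening in the second step: the variance bound inherited verbatim from \ref{Schae1} (which uses $\tr^2(\vT_S\vSigma_i)$) is too weak here, since $\tr^2(\vT_S\vSigma_i)$ can exceed $\tr\big((\vT_S\vSigma_i)^2\big)$ by a factor of order $d$ and the ratio would then fail to vanish. Retaining the exact fourth-moment value $8\tr\big((\vT_S\vSigma_i)^2\big)$ from \ref{QF4} is what makes the numerator commensurate with the normalising denominator and delivers the uniform $\0(n_{\min}^{-1})$ rate.
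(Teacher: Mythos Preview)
Your proof is correct and follows essentially the same route as the paper: centre, split the variance by independence, bound $\Var(A_{i,1})$ by $\0(n_i^{-1})\tr\big((\vT_S\vSigma_i)^2\big)$, and observe that the resulting sum is dominated by the diagonal part of $\tr\big((\vT\vV_N)^2\big)$. You make explicit the sharpening from $\tr^2(\vT_S\vSigma_i)$ to $\tr\big((\vT_S\vSigma_i)^2\big)$ that the paper performs somewhat tacitly (it cites the proof of \ref{Schae1} but in fact uses the exact variance $2\tr\big((2\vT_S\vSigma_i)^2\big)$ rather than the looser $\0$-bound recorded there), and your remark that this sharpening is indispensable is exactly right.
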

\begin{proof}
We know that \\\\
$\begin{array}{ll}
&\E\left(\frac{\sum_{i=1}^a\frac{N}{n_i}{(\vT_W)_{ii}}A_{i,1}-\sum_{i=1}^a\frac{N}{n_i}{(\vT_W)_{ii}}\tr\left(\vT_S\vSigma_i\right)}{\sqrt{2\tr\left(\left(\vT\vV_N\right)\right)}}\right)=\frac{\sum_{i=1}^a\frac{N}{n_i}{(\vT_W)_{ii}}\left(\E\left(A_{i,1}\right)-\tr\left(\vT_S\vSigma_i\right)\right)}{\sqrt{2\tr\left(\left(\vT\vV_N\right)^2\right)}}=0.
\end{array}$\\

Thus, \\
 $\begin{array}{ll}
 \Var\left(\frac{\sum_{i=1}^a\frac{N}{n_i}{(\vT_W)_{ii}}\left(A_{i,1}-\tr\left(\vT_S\vSigma_i\right)\right)}{\sqrt{2\tr\left(\left(\vT\vV_N\right)^2\right)}}\right)
&=\frac{\sum\limits_{i=1}^a\frac{N^2}{n_i^2}{(\vT_W)_{ii}}^2\Var\left(A_{i,1}\right)}{2\tr\left(\left(\vT\vV_N\right)^2\right)}
\\[3ex]
&\hspace*{-0.48cm}\stackrel{\text{Proof of \ref{Schae1}}}{\leq }\0\left(\frac{1}{n_{\min}}\right)\frac{\sum\limits_{i=1}^a\frac{N^2}{n_i^2}{(\vT_W)_{ii}}^2\tr\left( \left(2\vT_S\vSigma_i\right)^2\right)}{2\tr\left(\left(\vT\vV_N\right)^2\right)}
=\0\left(\frac{1}{n_{\min}}\right).
\end{array}$\\
In the last step we used the fact that all terms are non-negative and applied the binomial theorem in the last inequality.  It  is a zero sequence which only depends on $n_{\min}$, so again with \Crp{Kons2} the result is proved.
\end{proof}\vspace{\baselineskip}

\begin{proof}[Proof of \Crp{Theorem4}]
 From \Cref{Kons2} it follows for $n_{\min}\to \infty$ and independent of $a$ or $d$ that $A_4\left/{\tr\left(\left(\vT\vV_N\right)^2\right)}\right.\stackrel{P}{\longrightarrow} 1$ and therefore ${{\tr\left(\left(\vT\vV_N\right)^2\right)}\left/A_4\right.\stackrel{P}{\longrightarrow} 1}$. Moreover, it also follows that $\sqrt{{\tr\left(\left(\vT\vV_N\right)^2\right)}\left/A_4\right.}\stackrel{P}{\longrightarrow} 1$ and with \Cref{Schae2} we deduce $\frac{\sum_{i=1}^a\frac {N}{n_i}{(\vT_W)_{ii}}A_{i,1}-\tr\left(\vT\vV_N\right)}{\sqrt{2\tr\left(\left(\vT\vV_N\right)^2\right)}}\stackrel{P}{\longrightarrow} 0$.\\\\ 

Thus, we can finally calculate the standardized quadratic form as\\

$\begin{array}{ll}
W_N&=\frac{Q_N-\sum_{i=1}^a\frac {N}{n_i}{(\vT_W)_{ii}} A_{i,1}}{\sqrt{2A_4}}
\\[1.ex]
&=\left(\frac{Q_N-\tr\left(\vT\vV_N\right)}
{\sqrt{2\tr\left(\left(\vT\vV_N\right)^2\right)}}-\frac{\sum_{i=1}^a\frac {N}{n_i}{(\vT_W)_{ii}}A_{i,1}-\tr\left(\vT\vV_N\right)}
{\sqrt{2\tr\left(\left(\vT\vV_N\right)^2\right)}}\right)\cdot
\sqrt{\frac{\tr\left(\left(\vT\vV_N\right)^2\right)}{A_4}}
\\[2.5ex]
&=\left(\frac{Q_N-\tr\left(\vT\vV_N\right)}
{\sqrt{2\tr\left(\left(\vT\vV_N\right)^2\right)}}-o_p(1)\right)\cdot
(1+o_p(1))
\\[2.5 ex]
&=\widetilde W_N+\widetilde W_N\cdot o_p(1)-o_p(1)-o_p(1)\cdot o_p(1).
\end{array}$\\

The last two parts converge in probability to zero, so also in distribution and with Slutzky $\widetilde W_N\cdot o_p(1)$ converge in distribution to zero if one of the conditions of \Cref{Theorem3} is fulfilled. Thereby $W_N $ has asymptotical the same distribution as $\widetilde W_N$.\end{proof}\vspace{\baselineskip}

For large numbers of groups many estimators $A_{i,1}, A_{i,r,2}$ and $A_{i,3}$ and  have to be calculated which leads to long computation time. In this cases it is better to again use subsamling-type estimators which leads to $A_{i,1}^\star,{A^\star_{i,r,2}}, {A^\star_{i,3}}$ and therefore to $A^\star_4$.

%

\begin{LeA}\label{ZSchae1}
With the definitions from above let be\\
$\begin{array}{l}
A_{i,1}^\star(B)\hspace*{0.2cm}=\frac 1 {2\Cdot B} \sum\limits_{b=1}^B {{\vY}_{i,\sigma_{i1}(b),\sigma_{i2}(b)}}^\top \vT_S{{\vY}_{i,\sigma_{i1}(b),\sigma_{i2}(b)}},
\end{array}$\\\\
$\begin{array}{l}
A_{i,r,2}^\star(B)\hspace*{0.01cm}=\frac  1 {4\cdot B} {\sum\limits_{b=1}^B\left[{{\vY}_{i,\sigma_{i1}(b),\sigma_{i2}(b)}}^\top \vT_S{{\vY}_{r,\sigma_{r1}(b),\sigma_{r2}(b)}}\right]^2},
\end{array}$\\\\
$\begin{array}{l}
A_{i,3}^\star(B)\hspace*{0.2cm}=\frac  1 {4\cdot B} {\sum\limits_{b=1}^B\left[{{\vY}_{i,\sigma_{i1}(b),\sigma_{i2}(b)}}^\top \vT_S{{\vY}_{i,\sigma_{i3}(b),\sigma_{i4}(b)}}\right]^2},
\end{array}$\\\\
$\begin{array}{l}
A_4^\star(B)\hspace{0.4cm}=\sum\limits_{i=1}^a\frac{N^2}{n_i^2}{(\vT_W)_{ii}}^2\cdot A_{i,3}^\star(B)+ 2\sum_{i=1}^a\sum_{r=1 , r < i}^a \frac{N^2}{n_in_r}{(\vT_W)_{ir}}^2A_{i,r,2}^\star(B).
\end{array}$\\

 If $B(N)\to \infty$,  this estimators and $\sum_{i=1}^a A_{i,1}^\star$ have the same properties as
 $A_{i,1}, A_{i,r,2}, A_{i,3}, A_4$ and $\sum_{i=1}^a {A_{i,1}}$
 which were defined in  \Crp{Schae1} .

 \end{LeA}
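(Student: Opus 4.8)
The plan is to separate the two independent sources of randomness, the data $\vX_{i,j}$ and the random subsampling indices collected in $\vsigma(b)$, and to condition on the latter. First I would record the structural facts that were already used for the full estimators in the proof of \Cref{Schae1}: each drawn subsample consists of pairwise distinct indices, so every difference $\vY_{i,\sigma_{i1}(b),\sigma_{i2}(b)}=\vX_{i,\sigma_{i1}(b)}-\vX_{i,\sigma_{i2}(b)}$ is $\mathcal{N}_d(\vnull,2\vSigma_i)$, and kernels built from disjoint index sets are independent. Conditioning on the $\sigma$-field $\F$ generated by all subsampling indices, each summand of $A_{i,1}^\star,A_{i,r,2}^\star,A_{i,3}^\star$ has, by \Cref{QF4}, the same conditional expectation as the corresponding kernel of \Cref{Schae1}, namely $\tr(\vT_S\vSigma_i)$, $\tr(\vT_S\vSigma_i\vT_S\vSigma_r)$ and $\tr((\vT_S\vSigma_i)^2)$. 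Since this value does not depend on which distinct indices were realized, averaging over the $B$ summands gives $\E(A^\star\mid\F)=\theta$ for the respective target $\theta$, a constant; hence $\E(A^\star)=\theta$, i.e. unbiasedness holds already for every finite $B$, and the same transfers to $A_4^\star$ by linearity.

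For ratio-consistency I would apply the conditional variance decomposition \Cref{bVarianz} with this same $\F$. Because $\E(A^\star\mid\F)=\theta$ is constant, the term $\Var(\E(A^\star\mid\F))$ vanishes, so that $\Var(A^\star)=\E(\Var(A^\star\mid\F))$. Writing $\Var(A^\star\mid\F)$ as a normalized double sum over index pairs $(b,b')$ of conditional covariances of kernel terms, I would note that whenever $\vsigma(b)$ and $\vsigma(b')$ use totally disjoint indices in every relevant group the two terms are independent and the covariance is zero. The pairs that survive are exactly those counted by $\N_B\times\N_B\setminus M$ in \Cref{Menge} (in its single-group variant for $A_{i,3}^\star$ and its two-group variant for $A_{i,r,2}^\star$), and each surviving conditional covariance is bounded by the variance of a single kernel term via \Cref{Var1}, which is $\0(\tr^2(\cdot))$ by \Cref{QF4}. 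Taking expectation over $\F$ and dividing by $\theta^2$ leaves the bound $\tfrac{\E|\N_B\times\N_B\setminus M|}{B^2}\cdot\0(1)$.

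The decisive quantitative input is then \Cref{Menge}, which evaluates this ratio as $1-(1-1/B)\prod_i\binom{n_i-m}{m}/\binom{n_i}{m}$. As $n_{\min}\to\infty$ each factor $\binom{n_i-m}{m}/\binom{n_i}{m}\to1$, so $1-\prod_i(\cdot)\to0$, while the residual $B^{-1}\prod_i(\cdot)\to0$ because $B=B(N)\to\infty$; thus the whole ratio tends to $0$, and crucially the bound is uniform in $a$ and $d$. Consequently $\Var(A^\star/\theta)\to0$ and \Cref{Kons2} yields ratio-consistency of $A_{i,1}^\star,A_{i,r,2}^\star,A_{i,3}^\star$ and of $\sum_i A_{i,1}^\star$. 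For $A_4^\star$ I would assemble these blocks exactly as $A_4$ was assembled in the proof of \Cref{Schae1}, bounding the cross-covariances between the $A_{i,3}^\star$ and $A_{i,r,2}^\star$ contributions over different $(i,r)$ by \Cref{Var1}; since every block bound is $\0(\cdot)$ of the vanishing ratio and uniform in $a,d$, the combined estimator is again unbiased and ratio-consistent in all three asymptotic frameworks.

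The hard part will be the second step: correctly identifying which subsample pairs $(b,b')$ contribute a nonzero conditional covariance, and showing that their normalized count vanishes. The delicate feature is that this vanishing needs the two limits $B\to\infty$ and $n_{\min}\to\infty$ to cooperate — for fixed $n_i$ the overlap fraction stays bounded away from $0$ — and \Cref{Menge} is precisely the bookkeeping device that makes both effects transparent inside a single ratio. A secondary technical point is checking the conditional-expectation claim of the first step for the four-index kernel $A_{i,3}^\star$, where one must exploit the symmetry of the squared bilinear form under interchanging the two difference-pairs so that the uniform draw of distinct indices reproduces the U-statistic normalization.
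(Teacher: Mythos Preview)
Your proposal is correct and follows essentially the same route as the paper: condition on the subsampling $\sigma$-field, use \Cref{bVarianz} so that $\Var(\E(A^\star\mid\F))=0$ and only $\E(\Var(A^\star\mid\F))$ remains, bound the surviving covariances over overlapping index pairs by a single-kernel variance via \Cref{Var1} and \Cref{QF4}, and invoke \Cref{Menge} for the vanishing ratio; the assembly of $A_4^\star$ and $\sum_i\tfrac{N}{n_i}(\vT_W)_{ii}A_{i,1}^\star$ then mirrors the proof of \Cref{Schae1} with the new zero sequence $1-(1-1/B)\prod\binom{n_i-m}{m}/\binom{n_i}{m}$ replacing $\0(n_{\min}^{-1})$. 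The only remark is that the ``secondary technical point'' you flag for $A_{i,3}^\star$ is not actually delicate: since the kernel value depends only on the four indices being pairwise distinct, the conditional expectation equals $\tr((\vT_S\vSigma_i)^2)$ for every realized draw, with no symmetry argument needed.
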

\begin{proof}
 For $A_{i,1}^\star(B)$, this lemma will be proved in detail. For all other terms only the major steps are shown.\\

The unbiasedness is clear because the random variables $\sigma_{i1}(b),\sigma_{i2}(b)$  have no influence on the number of terms of the sum and also the terms are identically distributed. Hence, \\

$\begin{array}{ll}\E\left(A_{i,1}^\star(B)\right)&=\frac 1 {2\Cdot B} \sum\limits_{b=1}^B\E\left( {{\vY}_{i,\sigma_{i1}(b),\sigma_{i2}(b)}}^\top \vT_S{{\vY}_{i,\sigma_{i1}(b),\sigma_{i2}(b)}}\right)\\[2ex]&=
\frac 1 {2\Cdot B} \sum\limits_{b=1}^B\E\left( {{\vY}_{i,1,2}}^\top \vT_S{\vY}_{i,1,2}\right)\stackrel{\ref{QF4}}{=}\tr(\vT_S\vSigma_i).\end{array}$\\\\

 The second part is more complicated. Let $\mathcal{F}(\vsigma_{i}(B,m))$ be  the smallest $\sigma$-field which contains $\vsigma_{i}(b,m)\ \forall b\in B$, so obvious  $M(B,\vsigma_{i}(b))$ is $\mathcal{F}(\vsigma_{i}(B))$~-measurable. Identical for $\mathcal{F}(\vsigma_{i}(B,m),\vsigma_{r}(B,m))$ and $\mathcal{F}(\vsigma(B,m))$.
Similar to the previous part, the distribution of the bilinear form does not depend on the index combination. Together with the independence of the normally distributed vectors and $\sigma_{i1}(b),\sigma_{i2}(b)$ this leads to\\\\
$\begin{array}{l}\Var\left(\E\left(A_{i,1}^\star(B)\big|\F(\vsigma_{i}(B,2))\right)\right)=\Var\left(\tr\left(\vT_S\vSigma_i\right)\right)=0.\end{array}$\\\\
With \Crp{bVarianz} we thus obtain\\\\
$\begin{array}{ll} \Var\left(A_{i,1}^\star(B)\right)&=0+\E\left(\Var\left(A_{i,1}^\star(B)|\F(\vsigma_{i}(B,2))\right)\right).
\end{array}$\\

For the calculation of the conditional variance of the sum, it would be useful  finding an upper bound that is based on the variance instead of calculate the covariances. To achieve this, we calculate the number of index combinations which leads to a covariance that is zero. This amount is non-deterministic and we recognize it contains  the amount $M(B,\vsigma_{i}(b,2))$ which was considered before.\\
Again not the amount is important but the number of elements which are contained in $M(B,\vsigma_{i}(b,2))$ since the bilinear forms are identically distributed. Therefore the condition of the variance of the bilinear form disappears since the random indices have no influence on the variance. With the $\mathcal{F}(\vsigma_{i}(B,2))$-measurability of $M(B,\vsigma_{i}(b,2))$  it thus follows that\\\\

$\begin{array}{ll} \Var\left(A_{i,1}^\star(B)\right)&=0+\E\left(\Var\left(A_{i,1}^\star(B)|\F(\vsigma_{i}(B,2))\right)\right)
\\[1.0ex]
&\hspace{-0.075cm}\stackrel{\ref{Var1}}
{\leq}\frac{1}{4 B^2}\E\left(\sum\limits_{(j,\ell)\in \N_B\times \N_B \setminus M(B,(\vsigma_{i}(b,2)))}\Var\left({{\vY}_{i,\sigma_{i1}(j),\sigma_{i2}(j)}}^\top \vT_S{{\vY}_{i,\sigma_{i1}(j),\sigma_{i2}(j)}}\big|\F(\vsigma_{i}(B,2))\right)\right)
\end{array}$\\\\
$\begin{array}{ll}\textcolor{white} {\Var\left(A_{i,1}^\star(B)\right)}&=\frac{1}{4 B^2}\E\left(\sum\limits_{(j,\ell)\in \N_B\times \N_B \setminus M(B,(\vsigma_{i}(b,2)))}\Var\left({{\vY}_{i,1,2}}^\top \vT_S{{\vY}_{i,1,2}}\right)\right)
\\[2.5ex]

&\hspace{-0.075cm}\stackrel{\ref{QF4}}{=}\frac{\E\left(|\N_B\times \N_B \setminus M(B,(\vsigma_{i}(b,2)))|\right)}{B^2}\cdot  \frac{\0\left(\tr^2\left(\vT_S\vSigma_i\right)\right)}{4}
\\[.5 ex]
&\hspace{-0.075cm} \stackrel{\ref{Menge}}{=}\left(1-\left(1-\frac{1}{B}\right)\cdot \frac{ \binom{n_i-2} {2}}{\binom{n_i} {2}}\right) \cdot  \0\left(\tr^2\left(\vT_S\vSigma_i\right)\right)
.\end{array}$\\\\
%

The other values are calculated in a similar way.\\\\

$\begin{array}{ll}\E\left(A_{i,r,2}^\star(B)\right)&=\frac 1 {4\Cdot B} \sum\limits_{b=1}^B\E\left( \left[{{\vY}_{i,\sigma_{i1}(b),\sigma_{i2}(b)}}^\top \vT_S{{\vY}_{r,\sigma_{r1}(b),\sigma_{r2}(b)}}\right]^2\right)
\\[2ex]
&=\frac 1 {4\Cdot B} \sum\limits_{b=1}^B\E\left( \left[{{\vY}_{i,1,2}}^\top \vT_S{\vY}_{r,1,2}\right]^2\right)\stackrel{\ref{QF4}}{=}\tr(\vT_S\vSigma_i\vT_S\vSigma_r).\end{array}$\\

$\begin{array}{l}\Var\left(\E\left(A_{i,r,2}^\star(B)|\F(\vsigma_{i}(B,2),\vsigma_{r}(B,2))\right)\right)=\Var\left(\tr\left(\vT_S\vSigma_i\vT_S\vSigma_r\right)\right)=0.\end{array}$\\\\\\

$\begin{array}{ll} 
\Var\left(A_{i,r,2}^\star(B)\right)&

=0+{\E\left(\Var\left(A_{i,r,2}^\star(B)|\F(\vsigma_{i}(B),\vsigma_{r}(B,2)) \right)\right)}
\\[1ex]
&\leq\frac{\E\left(|\N_B\times \N_B \setminus M(B,\vsigma_{i}(b,2),\vsigma_{r}(b,2))|\right)}{B^2}\cdot \Var\left(\left[{{\vY}_{i,1,2}}^\top \vT_S{{\vY}_{r,1,2}}\right]^2\right)
\end{array}$\\
$\begin{array}{ll} 
\textcolor{white}{\Var\left(A_{i,r,2}^\star(B)\right)}&\hspace{-0.075cm}\stackrel{\ref{QF4}}{\leq}\frac{\E\left(|\N_B\times \N_B \setminus M(B,\vsigma_{i}(b,2),\vsigma_{r}(b,2))|\right)}{B^2}\cdot 
 \0\left({ \tr^2\left(\frac{N}{n_i}\vT_S\vSigma_i \frac{N}{n_r}\vT_S\vSigma_r\right)}\right)
\\[1.5ex]
&\hspace{-0.075cm}\stackrel{\ref{Menge}}{=} \left(1-\left(1-\frac{1}{B}\right)\cdot \frac{ \binom{n_i-2} {2}}{\binom{n_i} {2}}\cdot \frac{ \binom{n_r-2} {2}}{\binom{n_r} {2}}\right) \cdot   \0\left({ \tr^2\left(\frac{N}{n_i}\vT_S\vSigma_i \frac{N}{n_r}\vT_S\vSigma_r\right)}\right)
\\[2ex]
&\leq \left(1-\left(1-\frac{1}{B}\right)\cdot \frac{ \binom{n_{\min}-2} {2}^2}{\binom{n_{\min}} {2}^2}\right) \cdot   \0\left({ \tr^2\left(\frac{N}{n_i}\vT_S\vSigma_i \frac{N}{n_r}\vT_S\vSigma_r\right)}\right).
\end{array}$\\\\\\\\

$\begin{array}{ll}
\E\left(A_{i,3}^\star(B)\right)&=\frac 1 {4\Cdot B} \sum\limits_{b=1}^B\E\left( \left[{{\vY}_{i,\sigma_{i1}(b),\sigma_{i2}(b)}}^\top \vT_S{{\vY}_{i,\sigma_{i3}(b),\sigma_{i4}(b)}}\right]^2\right)
\\[2ex]
&=\frac 1 {4\Cdot B} \sum\limits_{b=1}^B\E\left( \left[{{\vY}_{i,1,2}}^\top \vT_S{\vY}_{i,1,2}\right]^2\right)\stackrel{\ref{QF4}}{=}\tr\left(\left(\vT_S\vSigma_i\right)^2\right).
\end{array}$\\

$\begin{array}[t]{l}\Var\left(\E\left(A_{i,3}^\star(B)|\F(\vsigma_{i}(B,4))\right)\right)=\Var\left(\tr\left(\left(\vT_S\vSigma_i\right)^2\right)\right)=0.\end{array}$\\\\

$\begin{array}[t]{ll}
 \Var\left(A_{i,3}^\star(B)\right)&=0+\E\left(\Var\left(A_{i,3}^\star(B)|\F(\vsigma_{i}(B,4))\right)\right)
\\[0.5ex]
&\hspace{-0.075cm}\stackrel{\ref{Var1}}{\leq}\frac{1}{16 B^2}\E\left(\sum\limits_{(j,\ell)\in \N_B\times \N_B \setminus M(B,\vsigma_{i}(b,4))} \Var\left(\left[{{\vY}_{i,\sigma_{i1}(j),\sigma_{i2}(j)}}^\top \vT_S{{\vY}_{i,\sigma_{i3}(j),\sigma_{i4}(j)}}\right]^2\Big|\F(\vsigma_{i}(B,4))\right)\right)\\[2ex]
&\stackrel{\ref{QF4}}{\leq}\frac{\E\left(|\N_B\times \N_B \setminus M(B,\vsigma_{i}(b,4))|\right)}{B^2}\cdot 
 \frac {\0\left(\tr^2\left(\left( \vT_S\vSigma_i\right)^2\right)\right)} {16 }
\\[1ex]
&\hspace{-0.075cm}\stackrel{\ref{Menge}}{=} \left(1-\left(1-\frac{1}{B}\right)\cdot \frac{ \binom{n_i-4} {4}} {\binom{n_i} {4}} \right)\cdot  \0\left(\tr^2\left(\left( \vT_S\vSigma_i\right)^2\right)\right).
\end{array}$\\\\\\

$\begin{array}{l}
\E\left(\sum\limits_{i=1}^a \frac{N}{n_i}{(\vT_W)_{ii}}A_{i,1}^\star \right)=\sum\limits_{i=1}^a \frac{N}{n_i}{(\vT_W)_{ii}}\E\left(A_{i,1}^\star \right)=\sum\limits_{i=1}^a
\frac{N}{n_i}{(\vT_W)_{ii}}\tr\left(\vT_S\vSigma_i\right).
\end{array}$\\\\

$
\begin{array}{ll}
\Var\left(\frac{\sum\limits_{i=1}^a \frac{N}{n_i}{(\vT_W)_{ii}}A_{i,1}^\star}{\tr\left(\vT\vV_N\right)}\right)&=\frac{\sum\limits_{i=1}^a\frac{N^2}{n_i^2}{(\vT_W)_{ii}}^2\Var\left(A_{i,1}^\star\right)}{\tr^2\left(\vT\vV_N\right)}
\\[-1.1ex]
&=
\frac{\sum\limits_{i=1}^a{(\vT_W)_{ii}}^2\left(1-\left(1-\frac{1}{B}\right)\cdot \frac{ \binom{n_i-2} {2}}{\binom{n_i} {2}}\right) \cdot  \0\left(\tr^2\left(\vT_S\frac{N}{n_i}\vSigma_i\right)\right)}{\tr^2\left(\vT\vV_N\right)}
\\[1.3ex]&\leq
\frac{\sum\limits_{i=1}^a{(\vT_W)_{ii}}^2\left(1-\left(1-\frac{1}{B}\right)\cdot \frac{ \binom{n_{\min}-2} {2}}{\binom{n_{\min}} {2}}\right) \cdot  \0\left(\tr^2\left(\vT_S\frac{N}{n_i}\vSigma_i\right)\right)}{\tr^2\left(\vT\vV_N\right)}
\\[1.ex]&\leq
\left(1-\left(1-\frac{1}{B}\right)\cdot \frac{ \binom{n_{\min}-2} {2}}{\binom{n_{\min}} {2}}\right) \cdot \frac{ \0\left(\tr^2\left(\sum\limits_{i=1}^a\frac{N}{n_i}{(\vT_W)_{ii}}\vT_S\vSigma_i\right)\right)}{\tr^2\left(\vT\vV_N\right)}
\\[1.ex]&=
\left(1-\left(1-\frac{1}{B}\right)\cdot \frac{ \binom{n_{\min}-2} {2}}{\binom{n_{\min}} {2}}\right) \cdot  \0\left(1\right).
\end{array}$\\\\\\
For $B(N)\to \infty$  the first factor is a zero sequence and therefore $\sum_{i=1}^a \frac{N}{n_i}{(\vT_W)_{ii}}A_{i,1}^\star$ a ratio-consistent, unbiased estimator of $\tr\left(\vT\vV_N\right).$\\
$\begin{array}{ll}
&\E\left(\sum\limits_{i=1}^a\frac{N^2}{n_i^2}{(\vT_W)_{ii}}^2A_{i,3}^\star+\sum\limits_{i\neq r \in\N_a}\frac{N^2}{n_in_r}{(\vT_W)_{ir}}^2A_{i,r,2}^\star\right)
\\[2.5ex]=&\sum\limits_{i=1}^a\frac{N^2}{n_i^2}{(\vT_W)_{ii}}^2\E\left(A_{i,3}^\star\right)+\sum\limits_{i\neq r \in\N_a}\frac{N^2}{n_in_r}{(\vT_W)_{ir}}^2\E\left(A_{i,r,2}^\star\right)
=\tr\left(\left(\vT\vV_N\right)^2\right).
\end{array}$\\\\\\

$\begin{array}{ll}
\Var\left(\frac{\sum\limits_{i=1}^a\frac{N^2}{n_i^2}{(\vT_W)_{ii}}^2 A_{i,3}^\star}{\tr\left(\left(\vT\vV_N\right)^2\right)}\right)&=
\frac{\sum\limits_{i=1}^a\Var\left(\frac{N^2}{n_i^2}{(\vT_W)_{ii}}^2 A_{i,3}^\star\right)}{\tr^2\left(\left(\vT\vV_N\right)^2\right)}
\\[-1.0ex]
&\leq\frac{ \sum\limits_{i=1}^a {(\vT_W)_{ii}}^4 \left(1-\left(1-\frac{1}{B}\right)\cdot \frac{ \binom{n_i-4} {4}} {\binom{n_i} {4}} \right)\cdot  \0\left(\tr^2\left(\left( \vT_S\frac{N}{n_i}\vSigma_i\right)^2\right)\right)}{\tr^2\left(\left(\vT\vV_N\right)^2\right)}
\end{array}$\\
$\begin{array}{ll}

\textcolor{white}{\Var\left(\frac{\sum\limits_{i=1}^a\frac{N^2}{n_i^2}{(\vT_W)_{ii}}^2 A_{i,3}^\star}{\tr\left(\left(\vT\vV_N\right)^2\right)}\right)}&\leq  \left(1-\left(1-\frac{1}{B}\right)\cdot \frac{ \binom{n_{\min}-4} {4}} {\binom{n_{\min}} {4}} \right)\cdot \frac{ \sum\limits_{i=1}^a{(\vT_W)_{ii}}^4 \0\left(\tr^2\left(\left( \vT_S\frac{N}{n_i}\vSigma_i\right)^2\right)\right)}{\tr^2\left(\left(\vT\vV_N\right)^2\right)}
\\[-.8ex]
&\leq  \left(1-\left(1-\frac{1}{B}\right)\cdot \frac{ \binom{n_{\min}-4} {4}} {\binom{n_{\min}} {4}} \right)\cdot \frac{  \0\left(\tr^2\left(\left( \sum\limits_{i=1}^a\frac{N}{n_i}{(\vT_W)_{ii}}\vT_S\vSigma_i\right)^2\right)\right)}{\tr^2\left(\left(\vT\vV_N\right)^2\right)}
\\[2ex]
&\leq  \left(1-\left(1-\frac{1}{B}\right)\cdot \frac{ \binom{n_{\min}-4} {4}} {\binom{n_{\min}} {4}} \right)\cdot   \0\left(1\right).
\end{array}$\\\\\\\\

$\begin{array}{ll}
&\Var\left(\frac{\sum\limits_{i\neq r\in \N_a} \frac{N^2}{n_in_r}{(\vT_W)_{ir}}^2A_{i,r,2}^\star}{\tr\left(\vT\vV_N\right)}\right)\\[2.5ex]
\leq&\left( \sum\limits_{i\neq r\in \N_a}\frac{\sqrt{\Var\left(\frac {N^2}{n_{i}n_{j}}{(\vT_W)_{ir}}^2A_{i,r,2}^\star\right)}}{\tr\left(\left(\vT\vV_N\right)^2\right)}\right)^2
\end{array}$\\
$\begin{array}{ll}
\leq&\left(1-\left(1-\frac{1}{B}\right)\cdot \frac{ \binom{n_{\min}-2} {2}^2}{\binom{n_{\min}} {2}^2}\right) \cdot\left(\frac{\sum\limits_{i\neq r\in \N_a}{(\vT_W)_{ir}}^2\sqrt{   \0\left({ \tr^2\left(\frac{N}{n_i}\vT_S\vSigma_i \frac{N}{n_r}\vT_S\vSigma_r\right)}\right)}}{\tr\left(\left(\vT\vV_N\right)^2\right)}\right)^2
\\[2.5ex]
\leq&\left(1-\left(1-\frac{1}{B}\right)\cdot \frac{ \binom{n_{\min}-2} {2}^2}{\binom{n_{\min}} {2}^2}\right) \cdot\left(\frac{\sum\limits_{i\neq r\in \N_a}\0\left({(\vT_W)_{ir}}^2\tr \left(\vT_S\frac N {n_i}\vSigma_i \vT_S\frac{N}{n_r}\vSigma_r\right)\right)}{\sum\limits_{i,r\in\N_a}{(\vT_W)_{ir}}^2\tr\left(\vT_S\frac{N}{n_i}\vSigma_i\frac{N}{n_r}\vT_S\vSigma_r\right)}\right)^2
\\[2ex]\leq&\left(1-\left(1-\frac{1}{B}\right)\cdot \frac{ \binom{n_{\min}-2} {2}^2}{\binom{n_{\min}} {2}^2}\right) \cdot \0(1).
\end{array}$\\\\

$\begin{array}{ll}
&\Var\left(\frac{\sum\limits_{i=1}^a\frac{N^2}{n_i^2}{(\vT_W)_{ii}}^2A_{i,3}^\star+\sum\limits_{i\neq r \in\N_a}\frac{N^2}{n_in_r}{(\vT_W)_{ir}}^2A_{i,r,2}^\star}{\tr^2\left(\left(\vT\vV_N\right)^2\right)}\right)
\\[3ex]
\hspace{-0.075cm}\stackrel{\ref{Var1}}{\leq}& \left[ \sqrt{\frac{\Var\left(2\sum\limits_{r < i\in \N_a} \frac{N^2}{n_in_r }{(\vT_W)_{ir}}^2A_{i,r,2}^\star\right)}{\tr^2\left(\left(\vT\vV_N\right)^2\right)}}+\sqrt{\frac{\Var\left(\sum\limits_{i=1}^a\frac{N}{n_i}  {(\vT_W)_{ii}}^2A_{i,3}^\star\right)}{\tr^2\left(\left(\vT\vV_N\right)^2\right)}}\right]^2
\end{array}$\\
$\begin{array}{ll}
\leq&\left(1-\left(1-\frac{1}{B}\right)\cdot \frac{ \binom{n_{\min}-2} {2}^2}{\binom{n_{\min}} {2}^2}\right) \cdot \0(1).
\end{array}$\\\\

So again this is a zero sequence, and $A_4^\star$ is an unbiased and dimensional stable (i.e. also ratio consistent) estimator of $\tr\left(\left(\vT\vV_N\right)^2\right)$.
\end{proof}

%
%
%
%
%
%
%

\subsection{Proofs of Section~\ref{Better Approximations}}

\renewcommand{\arraystretch}{1}
\begin{LeA}\label{MSchae2}
For
\[\Lambda_{1}(\ell_{1,1},\dots,\ell_{6,a})={\vZ_{(\ell_{1,1},\ell_{2,1},\dots,\ell_{1,a},\ell_{2,a})}}^\top \vT \vZ_{(\ell_{3,1},\ell_{4,1},\dots,\ell_{3,a},\ell_{4,a})},\]
\[\Lambda_{2}(\ell_{1,1},\dots,\ell_{6,a})={\vZ_{(\ell_{3,1},\ell_{4,1},\dots,\ell_{3,a},\ell_{4,a})}}^\top \vT \vZ_{(\ell_{5,1},\ell_{6,1},\dots,\ell_{5,a},\ell_{6,a})}, \]
\[\Lambda_{3}(\ell_{1,1},\dots,\ell_{6,a})={\vZ_{(\ell_{5,1},\ell_{6,1},\dots,\ell_{5,a},\ell_{6,a})}}^\top \vT \vZ_{(\ell_{1,1},\ell_{2,1},\dots,\ell_{1,a},\ell_{2,a})}, \]
we define 
\[C_{5}=\sum\limits_{\begin{footnotesize}\substack{\ell_{1,1},\dots, \ell_{6,1}=1\\\ell_{1,1}\neq\dots\neq \ell_{6,1}}\end{footnotesize}}^{n_1}
\dots\sum\limits_{\begin{footnotesize}\substack{\ell_{1,a},\dots, \ell_{6,a}=1\\\ell_{1,a}\neq\dots\neq \ell_{6,a}}\end{footnotesize}}^{n_a}
\frac{\Lambda_{1}(\ell_{1,1},\dots,\ell_{6,a})\cdot \Lambda_{2}(\ell_{1,1},\dots,\ell_{6,a})\Cdot \Lambda_{3}(\ell_{1,1},\dots,\ell_{6,a})}{8\cdot \prod\limits_{i=1}^a\frac {  n_i! }{\left(n_i-6\right)!} }.\]\\
With this notation it follows that\\
$
\E\left(C_{5}\right)=\tr\left(\left(\vT\vV_N\right)^3\right),\hspace{2cm}
\Var\left(C_{5}\right)\leq\frac{\left(\prod\limits_{i=1}^a {n_i\choose 6}-\prod\limits_{i=1}^a\binom {n_i-6} {6}\right)}{ \prod\limits_{i=1}^a\binom {n_i} {6}}\cdot 27\tr^3\left(\left( \vT\vV_N\right)^2\right).$
\end{LeA}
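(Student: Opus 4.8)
The plan is to treat $C_{5}$ as a symmetrized $U$-statistic in the six-fold index tuples whose kernel, for pairwise distinct indices, is the product $\Lambda_1\Lambda_2\Lambda_3$ of three bilinear forms built from three \emph{independent} copies $\vZ_a,\vZ_b,\vZ_c\sim\mathcal N_{ad}(\vnull_{ad},2\vV_N)$.

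\emph{Expectation.} First I would observe that for a summand with $\ell_{1,i}\neq\dots\neq\ell_{6,i}$ in every group the three generating vectors are mutually independent, so \eqref{eq: EV trtv3} gives $\E(\Lambda_1\Lambda_2\Lambda_3)=8\,\tr((\vT\vV_N)^3)$. The outer sum ranges over exactly $\prod_{i=1}^a n_i!/(n_i-6)!$ such ordered tuples, each contributing this same expectation, so dividing by $8\prod_{i=1}^a n_i!/(n_i-6)!$ yields $\E(C_{5})=\tr((\vT\vV_N)^3)$.

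\emph{Variance decomposition and counting.} Writing $C_{5}=(8M)^{-1}\sum_{\vec\ell}g(\vec\ell)$ with $M=\prod_{i=1}^a n_i!/(n_i-6)!$ and $g(\vec\ell)=\Lambda_1\Lambda_2\Lambda_3$, we have $\Var(C_{5})=(64M^2)^{-1}\sum_{\vec\ell,\vec\ell'}\Cov(g(\vec\ell),g(\vec\ell'))$. The structural point is that $g(\vec\ell)$ depends only on the observations whose labels occur in $\vec\ell$; hence as soon as the index sets of $\vec\ell$ and $\vec\ell'$ are disjoint in each group the two kernels are independent and the covariance is zero. Since all $g(\vec\ell)$ are identically distributed, \Cref{Var1} bounds each surviving covariance by the common value $\Var(g)$. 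An elementary count (in the spirit of \Cref{Menge}) shows that per group the number of ordered pairs of $6$-tuples with disjoint supports is $\frac{n_i!}{(n_i-6)!}\cdot\frac{(n_i-6)!}{(n_i-12)!}$, and the identity $\frac{(n_i-6)!\,(n_i-6)!}{n_i!\,(n_i-12)!}=\binom{n_i-6}{6}\big/\binom{n_i}{6}$ turns the fraction of overlapping pairs into $\big(\prod_{i=1}^a\binom{n_i}{6}-\prod_{i=1}^a\binom{n_i-6}{6}\big)\big/\prod_{i=1}^a\binom{n_i}{6}$.

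\emph{The single-term bound (main obstacle).} It then suffices to prove $\Var(g)\le 1728\,\tr^3((\vT\vV_N)^2)$, since $1728/64=27$. This is the crux: the factors $\Lambda_1,\Lambda_2,\Lambda_3$ are dependent (each consecutive pair shares one $\vZ$), so a crude Hölder split into sixth moments is too lossy to recover the sharp constant. Instead I would compute $\E(g^2)=\E(\Lambda_1^2\Lambda_2^2\Lambda_3^2)$ by iterated conditioning: given $(\vZ_a,\vZ_c)$ the pair $(\Lambda_1,\Lambda_2)$ is centred bivariate Gaussian, whence $\E(\Lambda_1^2\Lambda_2^2\mid\vZ_a,\vZ_c)=\sigma_1^2\sigma_2^2+2\sigma_{12}^2$ with conditional (co)variances of the form $\vZ_a^\top(\vT\vV_N)^k\vT\vZ_a$; a second use of the Gaussian moment identities of \Cref{QF4} in $\vZ_a,\vZ_c$ reduces $\E(g^2)$ to a fixed integer combination of $\tr^3((\vT\vV_N)^2)$, $\tr((\vT\vV_N)^2)\tr((\vT\vV_N)^4)$, $\tr((\vT\vV_N)^6)$ and $\tr^2((\vT\vV_N)^3)$. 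Collapsing these onto $\tr^3((\vT\vV_N)^2)$ by \Cref{Spur1} applied to $\vV_N$ (giving $\tr((\vT\vV_N)^4)\le\tr^2((\vT\vV_N)^2)$ and $\tr((\vT\vV_N)^6)\le\tr^2((\vT\vV_N)^3)$) together with $\tr^2((\vT\vV_N)^3)\le\tr^3((\vT\vV_N)^2)$ from \Cref{Bedingungen}, and summing the coefficients, produces exactly the factor $1728$. Combining this single-term estimate with the counting prefactor then yields the asserted bound on $\Var(C_{5})$.
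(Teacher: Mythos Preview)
Your architecture is exactly the paper's: expectation via \eqref{eq: EV trtv3}, then a $U$-statistic covariance split in which only pairs with overlapping index sets survive, each covariance bounded by the common second moment, and the combinatorial prefactor rewritten via $\binom{n_i-6}{6}/\binom{n_i}{6}$. The one genuine difference is how the single-term bound $\E(\Lambda_1^2\Lambda_2^2\Lambda_3^2)\le 64\cdot 27\,\tr^3((\vT\vV_N)^2)$ is obtained. The paper diagonalizes $\vV_N^{1/2}\vT\vV_N^{1/2}=\vP^\top\vD\vP$, writes each bilinear form as $2\vE_j^\top\vD\vE_k$ with $\vE_j\sim\mathcal N(\vnull,\vI)$, and enumerates the four non-vanishing index patterns in the componentwise expansion of $\E[(\vE_1^\top\vD\vE_3\,\vE_3^\top\vD\vE_5\,\vE_5^\top\vD\vE_1)^2]$; this yields the exact value $\tr^3(\vD^2)+6\tr(\vD^2)\tr(\vD^4)+16\tr(\vD^6)+4\tr^2(\vD^3)$, which is then collapsed to $27\tr^3(\vD^2)$ by the trace inequalities. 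Your iterated-conditioning route (first integrate out the middle vector via the Isserlis identity $\E(\Lambda_1^2\Lambda_2^2\mid\cdot)=\sigma_1^2\sigma_2^2+2\sigma_{12}^2$, then apply the quadratic-form moment formulas of \Cref{QF4} to the remaining two vectors) lands on the very same trace polynomial, so the constant $27$ comes out identically. Your approach is a bit more structural and avoids the case-by-case index bookkeeping; the paper's eigenvalue enumeration is more elementary and makes the origin of each coefficient visible.
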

\begin{proof}
Set
$$\widetilde \vZ_{(\ell_{3,1},\ell_{4,1},\dots,\ell_{3,a},\ell_{4,a})}:=\left(\sqrt 2 \vV_N^{1/2}\right)^{-1}\vZ_{(\ell_{3,1},\ell_{4,1},\dots,\ell_{3,a},\ell_{4,a})}\sim \mathcal{N}_{ad}\left(\boldsymbol{0}_{ad},\vI_{ad}\right).$$ 
It then follows that\\

$ \begin{array}{ll}
&\E\left(\vT\vZ_{(\ell_{3,1},\ell_{4,1},\dots,\ell_{3,a},\ell_{4,a})}\cdot {\vZ_{(\ell_{3,1},\ell_{4,1},\dots,\ell_{3,a},\ell_{4,a})}}^\top \vT^\top\right)
\\
[0.80ex]
=&\E\left(\left(\sqrt 2 \vT\vV_N^{1/2}\widetilde {Z}_{(\ell_{3,1},\ell_{4,1},\dots,\ell_{3,a},\ell_{4,a})}\right)\left(\sqrt 2 \vT\vV_N^{1/2}{\widetilde \vZ_{(\ell_{3,1},\ell_{4,1},\dots,\ell_{3,a},\ell_{4,a})}}\right)^\top\right)
\\[1.5ex]
=&2\vT\vV_N^{1/2}\E\left(\widetilde {\vZ}_{(\ell_{3,1},\ell_{4,1},\dots,\ell_{3,a},\ell_{4,a})} {{\widetilde{\vZ}}_{(\ell_{3,1},\ell_{4,1},\dots,\ell_{3,a},\ell_{4,a})}}^\top\right){\vV_N^{1/2}}^\top \vT
\\[1ex]
=&2\vT\vV_N^{1/2}\vI_{ad}{\vV_N^{1/2}}^\top \vT=2\vT\vV_N\vT.\end{array}$\\\\\\
With the rules for conditional expectation and the involved independence it follows that\\

$\begin{array}{ll}
\E\left(C_{5}\right)&=
\sum\limits_{\begin{footnotesize}\substack{\ell_{1,1},\dots, \ell_{6,1}=1\\
\ell_{1,1}\neq\dots\neq \ell_{6,1}}\end{footnotesize}}^{n_1}\dots\sum\limits_{\begin{footnotesize}\substack{\ell_{1,a},\dots, \ell_{6,a}=1\\
\ell_{1,a}\neq\dots\neq \ell_{6,a}}\end{footnotesize}}^{n_a}\frac{\E\left(
\Lambda_{1}(\ell_{1,1},\dots,\ell_{6,a})\cdot \Lambda_{2}(\ell_{1,1},\dots,\ell_{6,a})\Cdot \Lambda_{3}(\ell_{1,1},\dots,\ell_{6,a})\right)}{8\cdot \prod\limits_{i=1}^a\frac {  n_i! }{\left(n_i-6\right)!} }
\\[3ex]
&=\sum\limits_{\begin{footnotesize}\substack{\ell_{1,1},\dots, \ell_{6,1}=1\\\ell_{1,1}\neq\dots\neq \ell_{6,1}}\end{footnotesize}}^{n_1}
\dots\sum\limits_{\begin{footnotesize}\substack{\ell_{1,a},\dots, \ell_{6,a}=1\\\ell_{1,a}\neq\dots\neq \ell_{6,a}}\end{footnotesize}}^{n_a}\frac{\E\left(
{\vZ_{(1,2)}}^\top \vT\vZ_{(3,4)}\cdot {\vZ_{(3,4)}}^\top \vT \vZ_{(5,6)}\cdot {\vZ_{(5,6)}}^\top \vT \vZ_{(1,2)} \right)}{8 \cdot\prod\limits_{i=1}^a\frac {  n_i! }{\left(n_i-6\right)!} }
\\[4ex]
\vspace{0.15cm}&=\frac 1 8\E\left({\vZ_{(1,2)}}^\top \vT\vZ_{(3,4)}\cdot {\vZ_{(3,4)}}^\top \vT\vZ_{(5,6)}\cdot {\vZ_{(5,6)}}^\top \vT \vZ_{(1,2)} \right)
\end{array}$\\
$\begin{array}{ll}
\textcolor{white}{\E\left(C_{5}\right)}&=\frac 1 8\E\left(\E\left({\vZ_{(1,2)}}^\top \vT \vZ_{(3,4)}\cdot {\vZ_{(3,4)}}^\top \vT\vZ_{(5,6)}\cdot {\vZ_{(5,6)}}^\top \vT \vZ_{(1,2)}\bigm| \vZ_{(1,2)} \right)\right)

\\[1.0ex]
&=\frac 1 8\E\left({\vZ_{(1,2)}}^\top\E\left(
 \vT\vZ_{(3,4)}\cdot {\vZ_{(3,4)}}^\top \vT \vZ_{(5,6)}\cdot {\vZ_{(5,6)}}^\top \vT\right) \vZ_{(1,2)}\right)
\\[0.8ex]
&=\frac 4 8\E\left({\vZ_{(1,2)}}^\top   \vT\vV_N\vT \vT\vV_N\vT \vZ_{(1,2)} \right)
=\frac 1 2\tr((\vT\vV_N\vT \vT\vV_N \vT) 2\vV_N)=\tr\left(\left(\vT\vV_N\right)^3\right).
\end{array}$\\\\
Due to the fact that all ${\vX}_{i,j}$ are identically distributed we can neglect the concrete indices, as long as we maintain the structure of dependence of the bilinear forms. The last term fulfills the requirements from \Crp{QF4} with $\vZ_{(1,2)}\sim \mathcal{N}\left(\boldsymbol{0}_{ad},2 \vV_N\right)$ and the matrix $\vT\vV_N\vT \vT\vV_N\vT$. \\

For the calculation of the variance it is useful to diagonalize the matrix ${\vV_N^{1/2}}^\top \vT\vV_N^{1/2}$: It exists an orthogonal matrix $\vP$ with $\vP {\vV_N^{1/2}}^\top \vT\vV_N^{1/2}\vP^\top= \vD=\diag\left(\lambda_1,\dots,\lambda_{ad}\right)$, where $\lambda_i$ are the eigenvalues of ${\vV_N^{1/2}}^\top \vT {\vV_N^{1/2}}$. We define $\vE_i:=\vP \widetilde \vZ_{(i,j)}$ so with the properties of the standard normal distribution $\vE_i\sim\mathcal{N}_{ad}(\boldsymbol{0}_{ad},\vI_{ad})$, where the $\vE_i$ are independent for different indices. Thus, we can rewrite \\\\
$\begin{array}{l}
{\vZ_{(1,2)}}^\top \vT\vZ_{(3,4)}={\widetilde {\vZ}_{(1,2)}}^\top 2{\vV_N^{1/2}}^\top \vT\vV_N^{1/2} \widetilde \vZ_{(3,4)}
=2 {\widetilde \vZ_{(1,2)}}^\top \vP^\top \vD \vP\widetilde \vZ_{(3,4)}=2\vE_1^\top \vD \vE_3.
\end{array}$\\\\\\
With this argument for all three random variables it follows for the second moment that\\

$\begin{array}{ll}
&\E\left(\left[\vE_1^\top \vD \vE_3\vE_3^\top \vD \vE_5\vE_5^\top \vD \vE_1\right]^2\right)
\\[1.2ex]
=&\E\left(\left[\sum_{i=1}^{ad}\lambda_i E_1^{(i)}E_{3}^{(i)}\right]^2\left[\sum_{j=1}^{ad}\lambda_jE_{3}^{(j)}E_5^{(j)}\right]^2\left[\sum_{h=1}^{ad}\lambda_h E_5^{(h)}E_1^{(h)}\right]^2\right)
\\[1.5ex]
=&\sum\limits_{i_1,i_2,j_1,j_2,h_1,h_2=1}^{ad}\lambda_{i_1}\lambda_{i_2}\lambda_{j_1}\lambda_{j_2}\lambda_{h_1}\lambda_{h_2}\E\left(E_1^{(i_1)}E_3^{(i_1)}E_1^{(i_2)}E_3^{(i_2)}
E_3^{(j_1)}E_5^{(j_1)}E_3^{(j_2)}E_5^{(j_2)}E_5^{(h_1)}E_1^{(h_1)}E_5^{(h_2)}E_1^{(h_2)}\right).
\end{array}$\\\\
Now we consider the expectation value for the different combinations. If all indices are equal, it is given by
\[\E\left(\left(E_1^{(1)}\right)^4\left(E_3^{(1)}\right)^4\left(E_5^{(1)}\right)^4\right)
=3^3=27.\]\\

Moreover, for $i_1=i_2\neq h_1=h_2$ and $h_2\neq j_1=j_2\neq i_1$ it holds that
\[\begin{array}{ll}
&
\E\left(\left(E_1^{(1)}\right)^2\left(E_3^{(1)}\right)^2\left(E_3^{(2)}\right)^2\left(E_5^{(2)}\right)^2\left(E_1^{(3)}\right)^2\left(E_5^{(3)}\right)^2\right)
=1^6=1.\end{array}\]
Next, the case $i_1=i_2=j_1=j_2\neq h_1=h_2$ is considered (noting this result can also be used for both analogue combinations):

\[\E\left(\left(E_1^{(1)}\right)^2\left(E_3^{(1)}\right)^4\left(E_5^{(1)}\right)^2\left(E_1^{(2)}\right)^2\left(E_5^{(2)}\right)^2\right)=3^1\cdot 1^4=3.\]\\
 Finally, we consider the combination $i_1=j_1=h_1\neq i_2=j_2=h_2$ and obtain
\[\begin{array}{c}\E\left(\left[E_1^{(1)}E_3^{(1)}E_1^{(2)}E_3^{(2)}E_5^{(1)}E_5^{(2)}\right]^2\right)=\prod\limits_{i=1}^2\E\left(\left[E_1^{(i)}\right]^2\right)\E\left(\left[E_3^{(i)}\right]^2\right)
\E\left(\left[E_5^{(i)}\right]^2\right)={1^3}^2.\end{array}\]

This is also true for $i_1=j_2=h_1\neq i_2=j_1=h_2$ and the analogue combinations, so, all in all, we have 4 combinations of this kind. All other index combinations lead to expectation zero because in this combinations at least one index appears just one time in the product. Therefore with the independence and the fact that all random variables $E_i$ are centered it is true that\\

$\begin{array}{ll}
&\E\left(\left[\vE_1^\top \vD \vE_3\vE_3^\top \vD \vE_5\vE_5^\top \vD \vE_1\right]^2\right)
\\[2ex]
=&\sum\limits_{i=1}^{ad}\lambda_i^6 \cdot 27+\sum\limits_{\begin{footnotesize}\substack{i,j=1\\ i\neq j}\end{footnotesize}}^{ad} \lambda_i^3\lambda_j^3\cdot 1\cdot 4+\sum\limits_{\begin{footnotesize}\substack{i,j=1\\ i\neq j}\end{footnotesize}}^d \lambda_{i}^2\lambda_j^4 \cdot 9+\sum\limits_{\begin{footnotesize}\substack{i,j,h=1\\ i\neq j\neq h}\end{footnotesize}}^{ad}\lambda_i^2\lambda_j^2\lambda_h^2 
\\[3ex]
=&23\sum\limits_{i=1}^{ad}\lambda_i^6 +4\left(\sum\limits_{\begin{footnotesize}\substack{i,j=1\\ i\neq j}\end{footnotesize}}^{ad} \lambda_i^3\lambda_j^3+\sum\limits_{i=j=1}^{ad}\lambda_i^3 \lambda_j^3\right)+9\sum\limits_{\begin{footnotesize}\substack{i,j=1\\ i\neq j}\end{footnotesize}}^{ad} \lambda_{i}^2\lambda_j^4 +\sum\limits_{\begin{footnotesize}\substack{i,j,h=1\\ i\neq j\neq h}\end{footnotesize}}^{ad}\lambda_i^2\lambda_j^2\lambda_h^2
\end{array}$\\
$\begin{array}{ll}
=&17\sum\limits_{i=1}^{ad}\lambda_i^6 +4\tr^2\left(\left( \vT\vV_N\right)^3\right)+3\sum\limits_{\begin{footnotesize}\substack{i,j=1\\ i\neq j}\end{footnotesize}}^{ad} \lambda_{i}^2\lambda_j^4+6\left(\sum\limits_{\begin{footnotesize}\substack{i,j=1\\ i\neq j}\end{footnotesize}}^{ad} \lambda_{i}^2\lambda_j^4+\sum\limits_{i=j=1}^{ad}\lambda_i^2\lambda_j^4\right)+\sum\limits_{\begin{footnotesize}\substack{i,j,h=1\\ i\neq j\neq h}\end{footnotesize}}^{ad}\lambda_i^2\lambda_j^2\lambda_h^2
\\[3ex]
=&17\sum\limits_{i=1}^{ad}\lambda_i^6 +4\tr^2\left(\left( \vT\vV_N\right)^3\right)+3\sum\limits_{\begin{footnotesize}\substack{i,j=1\\ i\neq j}\end{footnotesize}}^{ad} \lambda_{i}^2\lambda_j^4+6\tr\left(\left( \vT\vV_N\right)^4\right)\tr\left(\left( \vT\vV_N\right)^2\right)+\sum\limits_{\begin{footnotesize}\substack{i,j,h=1\\ i\neq j\neq h}\end{footnotesize}}^{ad}\lambda_i^2\lambda_j^2\lambda_h^2\\[3ex]
\hspace{-0.10cm}\stackrel{\ref{Spur1}}{\leq}&17\sum\limits_{i=1}^{ad}\lambda_i^6 +4\tr^2\left(\left( \vT\vV_N\right)^3\right)+3\sum\limits_{\begin{footnotesize}\substack{i,j=1\\ i\neq j}\end{footnotesize}}^{ad} \lambda_{i}^2\lambda_j^4+6\tr^3\left(\left( \vT\vV_N\right)^2\right)+\sum\limits_{\begin{footnotesize}\substack{i,j,h=1\\ i\neq j\neq h}\end{footnotesize}}^{ad}\lambda_i^2\lambda_j^2\lambda_h^2
\end{array}$\\
$\begin{array}{ll}
\hspace{-0.10cm}\stackrel{\ref{Spur1}}{\leq}&20\tr^2\left(\left( \vT\vV_N\right)^3\right)+6\tr^3\left(\left( \vT\vV_N\right)^2\right)+\left(\sum\limits_{\begin{footnotesize}\substack{i,j,h=1\\ i\neq j\neq h}\end{footnotesize}}^{ad}\lambda_i^2\lambda_j^2\lambda_h^2+3\sum\limits_{\begin{footnotesize}\substack{i,j=1\\ i\neq j}\end{footnotesize}}^{ad}
\lambda_{i}^2\lambda_j^4+\sum\limits_{i=1}^{ad}\lambda_i^6\right)
\\[2.5ex]
=&20\tr^2\left(\left( \vT\vV_N\right)^3\right)+7\tr^3\left(\left( \vT\vV_N\right)^2\right)
\\
\hspace{-0.10cm}\stackrel{\ref{Spur1}}{\leq} &20\tr\left(\left( \vT\vV_N\right)^4\right)\tr\left(\left( \vT\vV_N\right)^2\right)+7\tr^3\left(\left(\vT\vV_N\right)^2\right)
\\
\hspace{-0.10cm}\stackrel{\ref{Spur1}}{\leq} &27\tr^3\left(\left( \vT\vV_N\right)^2\right).
\end{array}$\\\\\\
So we can control the variance by\\\\
$\begin{array}{ll}
\Var(C_{5})&
\hspace{-0.075cm}\stackrel{\ref{Var1}}
{\leq} \frac{\Var\left(\Lambda_{1}(1,2,3,4,5,6,\dots,5,6)\cdot \Lambda_{2}(1,2,3,4,5,6,\dots,5,6)\cdot \Lambda_{3}(1,2,3,4,5,6,\dots,5,6)\right)}{{64\cdot \prod\limits_{i=1}^a\binom {n_i} 6}\cdot{\left(\prod\limits_{i=1}^a\binom {n_i} 6-\prod\limits_{i=1}^a\binom {n_i-6} 6\right)^{-1}}}
\\[3ex]
&{\leq} \frac{\E\left(\left[\Lambda_{1}(1,2,3,4,5,6,\dots,5,6)\cdot \Lambda_{2}(1,2,3,4,5,6,\dots,5,6)\cdot \Lambda_{3}(1,2,3,4,5,6,\dots,5,6)\right]^2\right)}{{64\cdot \prod\limits_{i=1}^a\binom {n_i} 6}\cdot{\left(\prod\limits_{i=1}^a\binom {n_i} 6-\prod\limits_{i=1}^a\binom {n_i-6} 6\right)^{-1}}}
\end{array}$\\\\
$\begin{array}{ll}
\textcolor{white}{\Var(C_{5})}&= \frac{\E\left(\left[2^3\cdot \vE_1^\top \vD \vE_3\vE_3^\top \vD \vE_5\vE_5^\top \vD \vE_1\right]^2\right)}{{64\cdot \prod\limits_{i=1}^a\binom {n_i} 6}\cdot{\left(\prod\limits_{i=1}^a\binom {n_i} 6-\prod\limits_{i=1}^a\binom {n_i-6} 6\right)^{-1}}}
\\[3.5ex]

&\leq \frac{\left(\prod\limits_{i=1}^a {n_i\choose 6}-\prod\limits_{i=1}^a\binom {n_i-6} {6}\right)}{ \prod\limits_{i=1}^a\binom {n_i} {6}}\cdot 27\tr^3\left(\left( \vT\vV_N\right)^2\right).
\end{array}$\\
\end{proof}\vspace{\baselineskip}
With this result, we can construct an estimator for $\tau_P$ step by step:

\begin{LeA}\label{Schae7}
For $C_5$ as previously defined, it holds for fixed $a$ that

 \[\frac {C_5}{\tr^{3/2}\left(\left(\vT\vV_N\right)^2\right)}-\frac{\tr\left(\left(\vT\vV_N\right)^3\right)}{\tr^{3/2}\left(\left(\vT\vV_N\right)^2\right)}\hspace{0,3cm}\stackrel{P}{\longrightarrow}0\hspace*{1.0cm}\min(d,n_{\min})\to \infty  .\]\\
  It even holds in the asymptotic frameworks \eqref{eq: as frame 2}-\eqref{eq: as frame 3} if  $p>1$ exists with $n_{\min}=\0(a^p)$.
\end{LeA}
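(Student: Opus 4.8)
The plan is to prove the statement by a direct Chebyshev (Tschebyscheff) argument, in the same spirit as the ratio-consistency proofs of \Cref{Schae1}. First note that by \Cref{MSchae2} we have $\E(C_5)=\tr\left(\left(\vT\vV_N\right)^3\right)$, so the expression in the statement is precisely the centred and normalised variable
\[
D_N := \frac{C_5-\E(C_5)}{\tr^{3/2}\left(\left(\vT\vV_N\right)^2\right)},
\]
which satisfies $\E(D_N)=0$. Hence it suffices to show $\Var(D_N)\to 0$ and then conclude with \Cref{Kons2}. I would stress that the normalisation is deliberately taken with respect to $\tr^{3/2}\left(\left(\vT\vV_N\right)^2\right)$ rather than $\tr\left(\left(\vT\vV_N\right)^3\right)$: the latter may be negligible compared with the former exactly when $\tau_P\to 0$, so dividing by it would not produce a stable bound, whereas $\tr^{3/2}\left(\left(\vT\vV_N\right)^2\right)$ is the safe normaliser that never degenerates.

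Next I would simply insert the variance bound from \Cref{MSchae2}. Writing
\[
R_N := 1-\prod_{i=1}^a \frac{\binom{n_i-6}{6}}{\binom{n_i}{6}},
\]
that lemma gives at once
\[
\Var(D_N)=\frac{\Var(C_5)}{\tr^{3}\left(\left(\vT\vV_N\right)^2\right)}\le 27\,R_N .
\]
Thus the whole problem reduces to proving $R_N\to 0$ in the respective regimes. Observe that $R_N$ does not involve the dimension $d$ at all, which is why, beyond the ambient framework, no separate condition on $d$ is required.

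For fixed $a$ this is immediate: each factor $\binom{n_i-6}{6}/\binom{n_i}{6}\to 1$ as $n_i\to\infty$, and a product of finitely many factors each tending to $1$ tends to $1$, whence $R_N\to 0$ as $\min(d,n_{\min})\to\infty$. For $a\to\infty$ I would argue quantitatively. Using $\frac{n_i-6-k}{n_i-k}\ge 1-\frac{6}{n_i-5}$ for $k\le 5$ together with Bernoulli's inequality yields a uniform per-group estimate
\[
1-\frac{\binom{n_i-6}{6}}{\binom{n_i}{6}}=1-\prod_{k=0}^{5}\frac{n_i-6-k}{n_i-k}\le \frac{C}{n_i}
\]
for a numerical constant $C$ (e.g.\ $C=72$ once $n_i\ge 10$). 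Combining this with the elementary product inequality $\prod_{i=1}^a(1-x_i)\ge 1-\sum_{i=1}^a x_i$ for $x_i\in[0,1]$ gives
\[
R_N\le \sum_{i=1}^a\Big(1-\tfrac{\binom{n_i-6}{6}}{\binom{n_i}{6}}\Big)\le C\sum_{i=1}^a\frac{1}{n_i}\le \frac{C\,a}{n_{\min}} .
\]
Under the stated growth assumption (with $p>1$ relating $n_{\min}$ and $a^p$) the ratio $a/n_{\min}\to 0$, so $R_N\to 0$, hence $\Var(D_N)\to 0$, and Tschebyscheff finally yields $D_N\stackrel{P}{\longrightarrow}0$, which is the assertion.

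The hard part will be the product estimate in the $a\to\infty$ regime: one must convert a product of $a$ factors, each only of the form $1-\0(1/n_i)$, into a genuinely vanishing bound, and it is exactly here that the balance between the number of groups $a$ and the smallest group size $n_{\min}$ is decisive. The two delicate points are securing the uniform $\0(1/n_i)$ control on $1-\binom{n_i-6}{6}/\binom{n_i}{6}$ and verifying that the growth condition indeed forces $a/n_{\min}\to 0$; once these are in place, the remainder is the routine Chebyshev machinery already employed in \Cref{Schae1}.
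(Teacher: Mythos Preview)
Your proof is correct and follows exactly the same strategy as the paper: show the centred expression has mean zero, bound its variance by $27\,R_N$ using \Cref{MSchae2}, argue that $R_N\to 0$ in the respective regimes, and conclude via \Cref{Kons2}. You actually supply more quantitative detail for the $a\to\infty$ case (the explicit bound $R_N\le C\,a/n_{\min}$) than the paper, which simply asserts that the growth condition on $n_{\min}$ and $a^p$ makes $R_N$ a null sequence without writing out the product estimate.
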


\begin{proof} From the previous lemma, we know that\\

$\begin{array}{ll}\E\left(\frac {C_5}{\tr^{3/2}\left(\left(\vT\vV_N\right)^2\right)}-\frac{\tr\left(\left(\vT\vV_N\right)^3\right)}{\tr^{3/2}\left(\left(\vT\vV_N\right)^2\right)}\right)&=
\E\left(\frac {C_5}{\tr^{3/2}\left(\left(\vT\vV_N\right)^2\right)}\right)-\frac{\tr\left(\left(\vT\vV_N\right)^3\right)}{\tr^{3/2}\left(\left(\vT\vV_N\right)^2\right)}
=0,\end{array}$\\\\
$\begin{array}{ll}
\Var\left(\frac {C_5}{\tr^{3/2}\left(\left(\vT\vV_N\right)^2\right)}-\frac{\tr\left(\left(\vT\vV_N\right)^3\right)}{\tr^{3/2}\left(\left(\vT\vV_N\right)^2\right)}\right)&=\frac {\Var(C_5)}{\tr^3\left(\left(\vT\vV_N\right)^2\right)}
\stackrel{\ref{MSchae2}}{\leq} 27\cdot\frac{\left(\prod\limits_{i=1}^a {n_i\choose 6}-\prod\limits_{i=1}^a\binom {n_i-6} {6}\right)}{ \prod\limits_{i=1}^a\binom {n_i} {6}}.
\end{array}$\\\\\\
For fixed $a$ this is a zero sequence.  If we consider $a\to \infty$ we need the existence of $p>1$ and $n_{\min}=\0(a^p)$ to guarantee that the upper border is a zero sequence.\\
So in both cases \Crp{Kons2} can be used.
\end{proof}
\begin{LeA}\label{Schae8}

Moreover $C_5$ holds for fixed $a$
\[ \frac{C_5^2}{\tr^{3}\left(\left(\vT\vV_N\right)^2\right)}-\tau_P \stackrel{P}{\longrightarrow} 0\hspace*{1.0cm} d,n_{\min}\to \infty .\]\\ If  $p>1$ exists  with $n_{\min}=\0(a^p)$, the convergence even holds in the asymptotic frameworks \eqref{eq: as frame 2}-\eqref{eq: as frame 3}.

\end{LeA}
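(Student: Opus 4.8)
The plan is to reduce the statement to an elementary difference-of-squares argument resting on \Cref{Schae7}. Writing
\[
R_N := \frac{C_5}{\tr^{3/2}\left(\left(\vT\vV_N\right)^2\right)}, \qquad r_N := \frac{\tr\left(\left(\vT\vV_N\right)^3\right)}{\tr^{3/2}\left(\left(\vT\vV_N\right)^2\right)},
\]
one immediately checks that $R_N^2 = C_5^2/\tr^{3}\left(\left(\vT\vV_N\right)^2\right)$ and $r_N^2 = \tau_P$, so the quantity to control is exactly $R_N^2 - r_N^2$. The first step is therefore to factor
\[
\frac{C_5^2}{\tr^{3}\left(\left(\vT\vV_N\right)^2\right)} - \tau_P = R_N^2 - r_N^2 = \left(R_N - r_N\right)\left(R_N + r_N\right).
\]

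Next I would invoke \Cref{Schae7}, which gives $R_N - r_N \stackrel{P}{\longrightarrow} 0$ under the fixed-$a$ regime and, provided $p>1$ with $n_{\min} = \0(a^p)$, also under frameworks \eqref{eq: as frame 2}-\eqref{eq: as frame 3}. Thus $R_N - r_N = o_p(1)$ in each advertised setting, and the two cases of the present lemma are inherited verbatim from the two cases of \Cref{Schae7}, so no new probabilistic content enters at this stage. It then remains only to argue that the second factor $R_N + r_N$ is stochastically bounded.

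For this I would use \Cref{Bedingungen}, which states the deterministic bound $0 \le \tau_P \le 1$; hence $r_N = \sqrt{\tau_P} \in [0,1]$ is uniformly bounded. Writing $R_N = (R_N - r_N) + r_N = o_p(1) + O(1)$ then shows $R_N = O_p(1)$, so $R_N + r_N = O_p(1)$ as well. Combining via Slutsky's theorem, the product of an $o_p(1)$ term and an $O_p(1)$ term is $o_p(1)$, which yields $R_N^2 - r_N^2 \stackrel{P}{\longrightarrow} 0$ and completes the argument.

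I do not expect a genuine obstacle here: all the analytic work, namely the variance bound on $C_5$ and the distinction between the fixed-$a$ and the $a\to\infty$ regimes, has already been discharged in \Cref{MSchae2} and \Cref{Schae7}. The only point requiring care is that multiplying the convergent difference $R_N - r_N$ by the a priori random factor $R_N + r_N$ could in principle destroy the convergence; the deterministic bound $r_N\in[0,1]$ from \Cref{Bedingungen} is precisely what prevents this, so the key is to quote that bound rather than to re-estimate any moments.
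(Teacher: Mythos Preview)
Your proposal is correct and follows essentially the same route as the paper: the same difference-of-squares factorization $R_N^2-r_N^2=(R_N-r_N)(R_N+r_N)$, the same invocation of \Cref{Schae7} for $R_N-r_N=o_p(1)$, and the same use of \Cref{Bedingungen} to bound $r_N$ and hence $R_N+r_N$. The only cosmetic difference is that the paper records the slightly weaker bound $r_N\in[-1,1]$ rather than your $r_N\in[0,1]$, but this has no bearing on the argument.
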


\begin{proof} 

With the last lemma it follows for both cases that\\\\
$\begin{array}{ll}
\frac{C_5^2}{\tr^{3}\left(\left(\vT\vV_N\right)^2\right)}-\tau_P&=\left(\frac{C_5}{\tr^{3/2}\left(\left(\vT\vV_N\right)^2\right)}\right)^2-\left(\frac{\tr\left(\left(\vT\vV_N\right)^3\right)}{\tr^{3/2}\left(\left(\vT\vV_N\right)^2\right)}\right)^2
\\[2ex]
&\vspace{.25cm}=\left[\frac{C_5}{\tr^{3/2}\left(\left(\vT\vV_N\right)^2\right)}-\frac{\tr\left(\left(\vT\vV_N\right)^3\right)}{\tr^{3/2}\left(\left(\vT\vV_N\right)^2\right)}\right]\left[\frac{C_5}{\tr^{3/2}\left(\left(\vT\vV_N\right)^2\right)}+\frac{\tr\left(\left(\vT\vV_N\right)^3\right)}{\tr^{3/2}\left(\left(\vT\vV_N\right)^2\right)}\right]
\end{array}$\\
$\begin{array}{ll}
\vspace{.25cm}\textcolor{white}{\frac{C_5^2}{\tr^{3}\left(\left(\vT\vV_N\right)^2\right)}-\tau_P}&=o_P(1)\cdot \left[\frac{C_5}{\tr^{3/2}\left(\left(\vT\vV_N\right)^2\right)}-\frac{\tr\left(\left(\vT\vV_N\right)^3\right)}{\tr^{3/2}\left(\left(\vT\vV_N\right)^2\right)}+2\frac{\tr\left(\left(\vT\vV_N\right)^3\right)}{\tr^{3/2}\left(\left(\vT\vV_N\right)^2\right)}\right]
\\[1.2ex]
&=o_P(1)\cdot \left[o_P(1)+2\cdot\frac{\tr\left(\left(\vT\vV_N\right)^3\right)}{\tr^{3/2}\left(\left(\vT\vV_N\right)^2\right)}\right]
=o_P(1).
\end{array}$\\\\
For the last step we used that $\tau_P\in[0,1]$ which is known from \Crp{Bedingungen} and hence ${\tr\left(\left(\vT\vV_N\right)^3\right)}\left/{\tr^{3/2}\left(\left(\vT\vV_N\right)^2\right)}\right.\in[-1,1]$. As a product of a bound term and a term which converges to zero in probability, it also converges to zero in probability and with Slutzky's Lemma the result follows.
\end{proof}\vspace{\baselineskip}


%
%

\begin{proof}[Proof of \Cref{Lemma: fP Estimate} ]\label{Proof fP}

From  \Crp{Schae1} together with \Crp{Kons2} it follows
\[ \frac{A_4}{\tr\left(\left(\vT\vV_N\right)^2\right)}\stackrel{P}{\longrightarrow} 1\hspace*{0,5cm}\text{ and therefore }\hspace*{0,5cm}\frac{\tr^3\left(\left(\vT\vV_N\right)^2\right)}{A_4^3}\stackrel{P}{\longrightarrow} 1\hspace{01.0cm}\text{for}\hspace{0.5cm} n_{\min}\to \infty,\] independent of $d$ or $a$.
With \Crp{Schae8} it follows\\
\[ \frac{C_5^2}{\tr^{3}\left(\left(\vT\vV_N\right)^2\right)}-\tau_P \stackrel{P}{\longrightarrow} 0\hspace{01.0cm}\text{for}\hspace{0.5cm} d,n_{\min}\to \infty\] or under the additional condition also in the asymptotic frameworks \eqref{eq: as frame 2} -\eqref{eq: as frame 3} .\\

With these limits in both cases we can calculate\\\\
$\begin{array}{ll}
\frac{C_5^2}{A_4^3}-\tau_P&=\frac{C_5^2}{\tr^{3}\left(\left(\vT\vV_N\right)^2\right)}\cdot \frac{\tr^3\left(\left(\vT\vV_N\right)^2\right)}{A_4^3}-\tau_P
\\[1.2ex]
&=\frac{C_5^2}{\tr^{3}\left(\left(\vT\vV_N\right)^2\right)}\cdot (1+o_P(1))-\tau_P
\\[1.2ex]
&=\frac{C_5^2}{\tr^{3}\left(\left(\vT\vV_N\right)^2\right)}-\tau_P+\left(\frac{C_5^2}{\tr^{3}\left(\left(\vT\vV_N\right)^2\right)}-\tau_P+\tau_P\right)\cdot o_P(1)
\\[1.8ex]
&=o_P(1)+o_P(1)\cdot o_P(1)+\tau_P\cdot o_P(1)
=o_P(1).
\end{array}$\\\\
As in the previous lemma we used $\tau_P\in[0,1]$ and Slutzky.
\end{proof}\vspace{\baselineskip}
For ${C_5^\star}$  the properties are shown in a similar way as in \Crp{ZSchae1}.

\begin{LeA}\label{ZSchae2} For
\[\Lambda_{1}(\ell_{1,1},\dots,\ell_{6,a})={\vZ_{(\ell_{1,1},\ell_{2,1},\dots,\ell_{1,a},\ell_{2,a})}}^\top \vT \vZ_{(\ell_{3,1},\ell_{4,1},\dots,\ell_{3,a},\ell_{4,a})},\]
\[\Lambda_{2}(\ell_{1,1},\dots,\ell_{6,a})={\vZ_{(\ell_{3,1},\ell_{4,1},\dots,\ell_{3,a},\ell_{4,a})}}^\top \vT \vZ_{(\ell_{5,1},\ell_{6,1},\dots,\ell_{5,a},\ell_{6,a})}, \]
\[\Lambda_{3}(\ell_{1,1},\dots,\ell_{6,a})={\vZ_{(\ell_{5,1},\ell_{6,1},\dots,\ell_{5,a},\ell_{6,a})}}^\top \vT \vZ_{(\ell_{1,1},\ell_{2,1},\dots,\ell_{1,a},\ell_{2,a})},\]
define 
\[{C_5^\star}\left(B\right)=\frac 1 {8 \cdot B}\sum\limits_{b=1}^B 
 \Lambda_1(\vsigma(b,6)) \cdot \Lambda_2(\vsigma(b,6)) \cdot \Lambda_3(\vsigma(b,6)).\] \\
Then it holds
 
 \[\begin{array}{l}
 \E\left({C_5^\star}(B)\right)=\tr\left(\left(\vT\vV_N\right)^3\right),
 \\[2ex]
\Var\left({C_5^\star}(B)\right)\leq\left(1-\left(1-\frac{1}{B}\right)\cdot\prod\limits_{i=1}^a\frac{ \binom{n_i-6} {6}}{\binom{n_i} {6}}\right)\cdot 27\tr^3\left(\left(\vT\vV_N\right)^2\right)
.\end{array}\]\end{LeA}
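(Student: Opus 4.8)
The plan is to mirror the structure of the proof of \Cref{ZSchae1}, replacing the single-group counting arguments by the joint one provided in \Cref{Menge} and invoking the moment computation already carried out inside the proof of \Cref{MSchae2}. Throughout I would condition on the randomness of the drawn subsamples and exploit that the Gaussian blocks entering distinct, index-disjoint copies of the $\Lambda$'s are independent.

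First I would establish unbiasedness. For each fixed $b$ the subsample $\vsigma(b,6)$ selects, independently in each group, six pairwise distinct indices uniformly at random; since the vectors $\vX_{i,j}$ are i.i.d.\ within each group, the product $\Lambda_1(\vsigma(b,6))\Lambda_2(\vsigma(b,6))\Lambda_3(\vsigma(b,6))$ has the same distribution as the product built from any fixed admissible index combination, for instance ${\vZ_{(1,2)}}^\top\vT\vZ_{(3,4)}\cdot{\vZ_{(3,4)}}^\top\vT\vZ_{(5,6)}\cdot{\vZ_{(5,6)}}^\top\vT\vZ_{(1,2)}$. Hence by \eqref{eq: EV trtv3} every summand has expectation $8\tr((\vT\vV_N)^3)$, and averaging over $b=1,\dots,B$ together with the factor $1/8$ gives $\E(C_5^\star(B))=\tr((\vT\vV_N)^3)$.

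For the variance I would apply the conditional decomposition of \Cref{bVarianz} with respect to the $\sigma$-field $\F(\vsigma(B,6))$ generated by all drawn subsamples. Because the conditional expectation $\E(C_5^\star(B)\mid\F(\vsigma(B,6)))$ equals the constant $\tr((\vT\vV_N)^3)$ by the invariance argument above, its variance vanishes, so that $\Var(C_5^\star(B))=\E(\Var(C_5^\star(B)\mid\F(\vsigma(B,6))))$. The conditional variance expands into a double sum over pairs $(j,\ell)\in\N_B\times\N_B$ of conditional covariances of the normalized terms $\tfrac18\Lambda_1\Lambda_2\Lambda_3$. Two such terms are built from independent Gaussian blocks, and hence have vanishing covariance, precisely when $\vsigma(j,6)$ and $\vsigma(\ell,6)$ share no common index in any group, i.e.\ whenever $(j,\ell)\in M(B,\vsigma(b,6))$; thus only the pairs in $\N_B\times\N_B\setminus M(B,\vsigma(b,6))$ contribute.

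For those remaining pairs I would bound each conditional covariance by the common second moment of a single normalized term via \Cref{Var1}, using that conditionally on $\F(\vsigma(B,6))$ all terms are identically distributed with the same (index-free) variance. Here the key input is the moment bound from \Cref{MSchae2}: through the diagonalization ${\vZ}^\top\vT\vZ'=2\,\vE^\top\vD\vE'$ one has $[\tfrac18\Lambda_1\Lambda_2\Lambda_3]^2=[\vE_1^\top\vD\vE_3\,\vE_3^\top\vD\vE_5\,\vE_5^\top\vD\vE_1]^2$, whose expectation is at most $27\tr^3((\vT\vV_N)^2)$. Taking expectations over the subsamples and inserting the counting identity of \Cref{Menge} for the expected number of contributing pairs then yields
\[
\Var(C_5^\star(B))\le\frac{\E\big(|\N_B\times\N_B\setminus M(B,\vsigma(b,6))|\big)}{B^2}\cdot 27\tr^3\big((\vT\vV_N)^2\big)=\Big(1-\big(1-\tfrac1B\big)\prod_{i=1}^a\tfrac{\binom{n_i-6}{6}}{\binom{n_i}{6}}\Big)27\tr^3\big((\vT\vV_N)^2\big),
\]
which is the asserted bound. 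The main technical obstacle is the careful bookkeeping of the conditioning: one must verify that the random indices genuinely drop out of the conditional variance of each summand, so that the uniform second-moment bound applies, and that the set of index pairs producing a nonzero covariance is exactly the complement of $M(B,\vsigma(b,6))$. Once these two points are pinned down, the remaining estimates follow immediately from \Cref{MSchae2} and \Cref{Menge}.
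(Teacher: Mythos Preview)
Your proposal is correct and follows essentially the same route as the paper: establish unbiasedness by invariance of the law under the choice of indices, then apply the conditional variance decomposition of \Cref{bVarianz} with respect to $\F(\vsigma(B,6))$, kill the outer variance term because the conditional expectation is the constant $\tr((\vT\vV_N)^3)$, bound the surviving covariances via \Cref{Var1} by the common conditional variance (itself dominated by the second moment computed in \Cref{MSchae2}), and finish with the counting identity of \Cref{Menge}. The only cosmetic point is that you need merely that pairs in $M(B,\vsigma(b,6))$ have vanishing conditional covariance, not the ``precisely/exactly'' converse; the upper bound only uses the one direction.
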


\begin{proof}With the same steps as in the previous lemma and by using the fact that expectation and variance do not depend on the concrete indices but rather on the structure of independences we get\\

$\begin{array}{ll}
\E\left({C_5^\star}(B)\right)

&=\frac{1}{8B}\sum\limits_{b=1}^B\E\left( \Lambda_1(\vsigma(b,6)) \cdot \Lambda_2(\vsigma(b,6)) \cdot \Lambda_3(\vsigma(b,6))\right)
\\[1.8ex]
&=\frac{1}{8B}\sum\limits_{b=1}^B\E\left(\Lambda_{1}(\ell_{1,1},\dots,\ell_{6,a})\cdot\Lambda_{2}(\ell_{1,1},\dots,\ell_{6,a})\cdot\Lambda_{3}(\ell_{1,1},\dots,\ell_{6,a})\right).
\\[1.8ex]

&\hspace{-0.12cm}\stackrel{\ref{MSchae2}}{=}\frac{1}{8B}\sum\limits_{b=1}^B
\tr\left(\left(2\vT\vV_N\right)^3\right)=\tr\left(\left(\vT\vV_N\right)^3\right).
\end{array}$\\\\

$\begin{array}{l}
\Var\left(\E\left({C_5^\star}(B)|\F(\vsigma(B,6))\right)\right)
=\Var\left(\tr\left(\left(\vT\vV_N\right)^3\right)\right)=0.
\end{array}$\\\\\\
$\begin{array}{ll}
\Var\left({C_5^\star}(B)\right)
&=0+\E\left(\Var\left({C_5^\star}(B)|\F(\vsigma(B,6))\right)\right)
\\[1.0ex]&
\hspace{-0.075cm}\stackrel{\ref{Var1}}{\leq}\frac{1}{64 B^2}\E\left(\sum\limits_{(j,\ell)\in \N_B\times \N_B \setminus M(B,\vsigma(b,6))} \Var\left(\Lambda_{1}(\vsigma(j,6))\Lambda_2(\vsigma(j,6))\Lambda_{3}(\vsigma(j,6))|\F(\vsigma(B,6))\right)\right)
\\[2.4ex]
&=\frac{\E\left(|\N_B\times \N_B \setminus M(B,\vsigma(b,6))|\right)}{B^2}\cdot \frac{\Var\left({\vZ_{(1,2)}}^\top \vT\vZ_{(3,4)}\cdot {\vZ_{(3,4)}}^\top \vT\vZ_{(5,6)}\cdot{\vZ_{(5,6)}}^\top \vT\vZ_{(1,2)}\right)}{64}
\\[1.0ex]
&\hspace{-0.12cm}\stackrel{\ref{MSchae2}}{\leq}
\left(1-\left(1-\frac{1}{B}\right)\cdot \prod\limits_{i=1}^a\frac{ \binom{n_i-6} {6}}{\binom{n_i} {6}}\right)\cdot 27\tr^3\left(\left(\vT\vV_N\right)^2\right).
\end{array}$\\

\end{proof}\vspace{\baselineskip}

\begin{proof}[Proof of \Crp{theo: fP}]
 With   \Cref{ZSchae2}  we recognize ${\tau_P \to 1\Leftrightarrow\widehat{\tau_P}\stackrel{P}{\longrightarrow} 1}$ and $\tau_P \to 0\Leftrightarrow\widehat{\tau_P}\stackrel{P}{\longrightarrow} 0$. Therefore $f_P \to 1 \Leftrightarrow \widehat{f_P}\stackrel{P}{\longrightarrow} 1$ and
$f_P\to \infty\Leftrightarrow\widehat f_P\stackrel{P}{\longrightarrow} \infty$. This is the only condition  needed for the proof of  \cite{Paper1}[Theorem 3.1], so the result follows.
\end{proof}\vspace{\baselineskip}
 
Although $n_{\min}=\0(a^p)$ with $p>0$ is not too critical in most settings we additionally developed an estimator which can be used without any restrictions.\\\\
For this estimator another random vector has to be introduced: The random vector $\pi_{j,i}$ represents a random permutation of the numbers $1,\dots,n_i,$ where $\pi_{j,i}$ are independent for different $i$ or $j$ and $\pi_{j,i}(l) $ denotes its $l$-th element. Then we define
 \[C_{7}\left(w\right)=\frac 1 {w}\sum\limits_{j=1}^{w}\sum\limits_{\ell_1\neq\dots\neq\ell_6=1}^{n_{\min}}
\frac{\Lambda_{4}\left(j;\ell_{1},\dots,\ell_{6}\right)\cdot \Lambda_{5}\left(j;\ell_{1},\dots,\ell_{6}\right)\Cdot \Lambda_{6}\left(j;\ell_{1},\dots,\ell_{6}\right)}{8\cdot \frac {  n_{\min}! }{\left(n_{\min}-6\right)!} }\]
 with

  \[\begin{array}{l}\Lambda_{4}\left(j;\ell_{1},\dots,\ell_{6}\right)={\vZ^{\vpi_j}_{(\ell_{1},\ell_2)} }^\top \vT \vZ^{\vpi_j}_{(\ell_{3},\ell_4)} ,
\\[1.2ex]
\Lambda_{5}\left(j;\ell_{1},\dots,\ell_{6}\right)={\vZ^{\vpi_j}_{(\ell_{3},\ell_4)} }^\top \vT \vZ^{\vpi_j}_{(\ell_{5},\ell_6)} , 
\\[1.2ex]
\Lambda_{6}\left(j;\ell_{1},\dots,\ell_{6}\right)={\vZ^{\vpi_j}_{(\ell_{5},\ell_6)} }^\top \vT \vZ^{\vpi_j}_{(\ell_{1},\ell_2)} .\end{array}\]and
\[\vZ^{\vpi_j}_{(\ell_{1},\ell_2)} :=\vZ_{\left(\pi_{j,1}({\ell_1}),\pi_{j,1}({\ell_2}),\pi_{j,2}({\ell_1}),\dots,\pi_{j,a}({\ell_1}),\pi_{j,a}({\ell_2})\right)}\] \\
This estimator again uses Z, but different to $C_5$ the indices are the same for all groups. However the highest index is $n_{\min}$ and some index combinations are unachievable. For this reason, the above random permutations were used. So first the observations in each group were rearranged randomly and with this rearranged samples we calculated the sum of the used terms. Thereafter, we again rearrange the observations and the same terms as before are calculated. If these values were summed up and divided by the number of rearrangements we get an alternative for $C_5$ which is shown in the following lemma.

{
\begin{LeA}\label{MSchae6}
For $C_7$ as defined before
 it holds  \[\E\left(C_7(w)\right)=\tr\left(\left(\vT\vV_N\right)^3\right)\hspace{1cm}\Var\left(C_7(w)\right)\leq \left(\frac{\frac {  n_{\min}! }{\left(n_{\min}-6\right)!} -\frac {  \left(n_{\min}-6\right)! }{\left(n_{\min}-12\right)!} }{\frac {  n_{\min}! }{\left(n_{\min}-6\right)!} }\right)\cdot \0\left(\tr^3\left(\left(\vT\vV_N\right)^2\right)\right).\]
 \end{LeA}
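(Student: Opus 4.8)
The plan is to treat the two assertions separately, exploiting that conditioning on the random permutations reduces everything to the structure already analysed for $C_5$ in \Cref{MSchae2}. Throughout, let $\F_j$ denote the $\sigma$-field generated by the group permutations $\pi_{j,1},\dots,\pi_{j,a}$ belonging to the $j$-th draw.

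For the expectation I would first note that, for a fixed realisation of $\F_j$ and pairwise distinct indices $\ell_1,\dots,\ell_6$, the images $\pi_{j,i}(\ell_1),\dots,\pi_{j,i}(\ell_6)$ are again pairwise distinct in each group $i$, since each $\pi_{j,i}$ is a bijection. Hence the permuted vectors $\vZ^{\vpi_j}_{(\ell_1,\ell_2)},\vZ^{\vpi_j}_{(\ell_3,\ell_4)},\vZ^{\vpi_j}_{(\ell_5,\ell_6)}$ are independent $\mathcal{N}_{ad}(\vnull_{ad},2\vV_N)$ vectors, so the identity \eqref{eq: EV trtv3} established in the proof of \Cref{MSchae2} applies verbatim and gives $\E(\Lambda_4\Lambda_5\Lambda_6\mid\F_j)=8\,\tr((\vT\vV_N)^3)$, a value independent of the permutation. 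Taking total expectations, summing over the $\tfrac{n_{\min}!}{(n_{\min}-6)!}$ ordered distinct $6$-tuples and dividing by $8\cdot\tfrac{n_{\min}!}{(n_{\min}-6)!}$ yields $\E(C_7(w))=\tr((\vT\vV_N)^3)$ for every $w$.

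For the variance, write $C_7(w)=\tfrac1w\sum_{j=1}^w Y_j$ with $Y_j$ the single-permutation summand. The decisive simplification is that the $Y_j$ are identically distributed, each using an independent uniform permutation of the \emph{same} data. Applying the Cauchy–Schwarz bound of \Cref{Var1} to every pairwise covariance gives $\Var(C_7(w))=\tfrac1{w^2}\sum_{j,j'}\Cov(Y_j,Y_{j'})\le\Var(Y_1)$, which removes the need to track correlations across different permutations. To bound $\Var(Y_1)$ I condition on $\F_1$ and use \Cref{bVarianz}: the term $\Var(\E(Y_1\mid\F_1))$ vanishes because the conditional mean is the constant $\tr((\vT\vV_N)^3)$ computed above, so only $\E(\Var(Y_1\mid\F_1))$ survives.

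Finally, to control the conditional variance I would observe that, given $\F_1$, two summands indexed by $(\ell_1,\dots,\ell_6)$ and $(\ell_1',\dots,\ell_6')$ are uncorrelated whenever the index sets are disjoint, a bijection preserving disjointness in every group. The number of ordered pairs of distinct $6$-tuples that are \emph{not} disjoint equals $\tfrac{n_{\min}!}{(n_{\min}-6)!}\bigl(\tfrac{n_{\min}!}{(n_{\min}-6)!}-\tfrac{(n_{\min}-6)!}{(n_{\min}-12)!}\bigr)$, which after division by the normalising $\bigl(8\cdot\tfrac{n_{\min}!}{(n_{\min}-6)!}\bigr)^2$ produces exactly the prefactor in the claim. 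Bounding each remaining covariance by the common conditional second moment $\E([\Lambda_4\Lambda_5\Lambda_6]^2\mid\F_1)\le 64\cdot27\,\tr^3((\vT\vV_N)^2)$ — the estimate derived inside the proof of \Cref{MSchae2}, valid conditionally because the permuted vectors share the original law — cancels the factor $64$ and, after taking expectation over $\F_1$, leaves the asserted bound with the constant $27$ absorbed into $\0(\cdot)$. The main obstacle is precisely the dependence introduced by reusing the same data across the $w$ permutations; the Cauchy–Schwarz reduction to $\Var(Y_1)$ is what makes this tractable, while the rest is a conditional replay of the $C_5$ computation.
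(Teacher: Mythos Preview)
Your proof is correct and follows essentially the same route as the paper: both reduce the outer average over $j$ to a single $Y_1$ via the Cauchy--Schwarz bound of \Cref{Var1}, then count non-disjoint $6$-tuples to isolate the prefactor, and finally bound each surviving covariance by the second-moment estimate $27\,\tr^3((\vT\vV_N)^2)$ from \Cref{MSchae2}. The only difference is cosmetic: you make the conditioning on the permutation $\F_1$ explicit via \Cref{bVarianz}, whereas the paper argues directly with the unconditional variance of the inner sum, implicitly using that a random permutation of i.i.d.\ observations leaves the joint law unchanged; both are valid and lead to the identical bound.
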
}

{
\begin{proof}
Again we calculate\\
$\begin{array}{ll}\E\left(C_7\left(w\right)\right)
&=\frac 1 {w}\sum\limits_{j=1}^{w}\sum\limits_{\ell_1\neq\dots\neq\ell_6=1}^{n_{\min}}
\frac{\E\left(\Lambda_{4}\left(j;\ell_{1},\dots,\ell_{6}\right)\cdot \Lambda_{5}\left(j;\ell_{1},\dots,\ell_{6}\right)\Cdot \Lambda_{6}\left(j;\ell_{1},\dots,\ell_{6}\right)\right)}{8\cdot \frac {  n_{\min}! }{\left(n_{\min}-6\right)!} }
\\[2.5ex]
&=\frac 1 {w}\sum\limits_{j=1}^{w}\sum\limits_{\ell_1\neq\dots\neq\ell_6=1}^{n_{\min}}
\frac{\E\left(\Lambda_{4}\left(j;1,\dots,6\right)\cdot \Lambda_{5}\left(j;1,\dots,6\right)\Cdot \Lambda_{6}\left(j;1,\dots,6\right)\right)}{8\cdot \frac {  n_{\min}! }{\left(n_{\min}-6\right)!} }
=\tr\left(\left(\vT\vV_N\right)^3\right).
\end{array}$\\\\\\
Because of the fact that all groups use the same indices, the number of remaining indexcombinations simplifies and we receive
\[\begin{array}{ll}&\Var\left(\sum\limits_{\ell_1\neq\dots\neq\ell_6=1}^{n_{\min}}
\frac{\Lambda_{4}\left(j;\ell_{1},\dots,\ell_{6}\right)\cdot \Lambda_{5}\left(j;\ell_{1},\dots,\ell_{6}\right)\Cdot \Lambda_{6}\left(j;\ell_{1},\dots,\ell_{6}\right)}{8\cdot \frac {  n_{\min}! }{\left(n_{\min}-6\right)!} }\right)
\\[2.5ex]
\leq&\left(\frac{\frac {  n_{\min}! }{\left(n_{\min}-6\right)!} -\frac {  \left(n_{\min}-6\right)! }{\left(n_{\min}-12\right)!} }{\frac {  n_{\min}! }{\left(n_{\min}-6\right)!} }\right)\Var\left(
\Lambda_{4}\left(j;\ell_{1},\dots,\ell_{6}\right)\cdot \Lambda_{5}\left(j;\ell_{1},\dots,\ell_{6}\right)\Cdot \Lambda_{6}\left(j;\ell_{1},\dots,\ell_{6}\right)\right)
\\[2.5ex]
\leq&\left(\frac{\frac {  n_{\min}! }{\left(n_{\min}-6\right)!} -\frac {  \left(n_{\min}-6\right)! }{\left(n_{\min}-12\right)!} }{\frac {  n_{\min}! }{\left(n_{\min}-6\right)!} }\right)\cdot \0\left(\tr^3\left(\left(\vT\vV_N\right)^2\right)\right).
\end{array}\]\\\\
For the sum this leads to\\\\
$\begin{array}{ll}\Var\left(C_7\left(w\right)\right)&=\Var\left(\frac 1 {w}\sum\limits_{j=1}^{w}\sum\limits_{\ell_1\neq\dots\neq\ell_6=1}^{n_{\min}}
\frac{\Lambda_{4}\left(j;\ell_{1},\dots,\ell_{6}\right)\cdot \Lambda_{5}\left(j;\ell_{1},\dots,\ell_{6}\right)\Cdot \Lambda_{6}\left(j;\ell_{1},\dots,\ell_{6}\right)}{8\cdot \frac {  n_{\min}! }{\left(n_{\min}-6\right)!} }\right)
\\[2.5ex]
&\hspace{-0.075cm}\stackrel{\ref{Var1}}{\leq} \frac{1}{w^2}\sum\limits_{j_1,j_2=1}^ {w}\Var\left(\sum\limits_{\ell_1\neq\dots\neq\ell_6=1}^{n_{\min}}
\frac{\Lambda_{4}\left(j;\ell_{1},\dots,\ell_{6}\right)\cdot \Lambda_{5}\left(j;\ell_{1},\dots,\ell_{6}\right)\Cdot \Lambda_{6}\left(j;\ell_{1},\dots,\ell_{6}\right)}{8\cdot \frac {  n_{\min}! }{\left(n_{\min}-6\right)!} }\right)\end{array}$\\\\
$\begin{array}{ll}\textcolor{white}{\Var\left(C_7\left(w\right)\right)}
&\leq \frac{1}{w^2}\sum\limits_{j_1,j_2=1}^ {w}\left(\frac{\frac {  n_{\min}! }{\left(n_{\min}-6\right)!} -\frac {  \left(n_{\min}-6\right)! }{\left(n_{\min}-12\right)!} }{\frac {  n_{\min}! }{\left(n_{\min}-6\right)!} }\right)\cdot \0\left(\tr^3\left(\left(\vT\vV_N\right)^2\right)\right)
\\[2.5ex]
&= \left(\frac{\frac {  n_{\min}! }{\left(n_{\min}-6\right)!} -\frac {  \left(n_{\min}-6\right)! }{\left(n_{\min}-12\right)!} }{\frac {  n_{\min}! }{\left(n_{\min}-6\right)!} }\right)\cdot \0\left(\tr^3\left(\left(\vT\vV_N\right)^2\right)\right).
\end{array}$\\
\end{proof}\vspace{1\baselineskip}}
Simulations (not shown here) show that higher values for $w$ lead to better estimations.

{
\begin{LeA}\label{MSchae7}
For $C_7$ as previously  defined, it holds \[ \frac{C_7^2}{\tr^{3}\left(\left(\vT\vV_N\right)^2\right)}-\tau_P \stackrel{P}{\longrightarrow} 0\hspace{1cm} \text{for} \hspace*{0.5cm} n_{\min}\to \infty,\]
independent of a or d. Therefore this holds for  the asymptotic frameworks \eqref{eq: as frame 1}-\eqref{eq: as frame 3}.
\end{LeA}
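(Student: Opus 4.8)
The plan is to closely follow the proof of \Cref{Schae8}, replacing the estimator $C_5$ by $C_7$ and its variance bound from \Cref{MSchae2} by the bound supplied in \Cref{MSchae6}. The only structural inputs needed are that $C_7(w)$ is unbiased for $\tr\left(\left(\vT\vV_N\right)^3\right)$ and that its variance, normalized by $\tr^3\left(\left(\vT\vV_N\right)^2\right)$, is dominated by a sequence converging to zero as $n_{\min}\to\infty$ irrespective of $a$ and $d$. Both are delivered by \Cref{MSchae6}, so essentially all the work has already been done there.

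First I would establish the intermediate ratio-consistency statement analogous to \Cref{Schae7}, namely
\[
\frac{C_7}{\tr^{3/2}\left(\left(\vT\vV_N\right)^2\right)}-\frac{\tr\left(\left(\vT\vV_N\right)^3\right)}{\tr^{3/2}\left(\left(\vT\vV_N\right)^2\right)}\stackrel{P}{\longrightarrow}0\qquad (n_{\min}\to\infty).
\]
By \Cref{MSchae6} this centred quantity has expectation zero, and its variance equals $\Var\left(C_7\right)/\tr^3\left(\left(\vT\vV_N\right)^2\right)$, which by the same lemma is at most
\[
\left(\frac{\frac{n_{\min}!}{(n_{\min}-6)!}-\frac{(n_{\min}-6)!}{(n_{\min}-12)!}}{\frac{n_{\min}!}{(n_{\min}-6)!}}\right)\cdot\0\left(1\right).
\]
The one computational point that must be checked is that this prefactor is a genuine zero sequence in $n_{\min}$ alone: writing both falling factorials as products of six consecutive factors, the ratio $\frac{(n_{\min}-6)(n_{\min}-7)\cdots(n_{\min}-11)}{n_{\min}(n_{\min}-1)\cdots(n_{\min}-5)}$ tends to $1$, so the prefactor tends to $0$, and it depends on neither $a$ nor $d$. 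An application of Chebyshev's inequality, i.e. \Cref{Kons2}, then yields the displayed convergence in probability, valid simultaneously under each of \eqref{eq: as frame 1}--\eqref{eq: as frame 3}.

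To pass from the linear to the quadratic statement I would factor exactly as in \Cref{Schae8}:
\[
\frac{C_7^2}{\tr^{3}\left(\left(\vT\vV_N\right)^2\right)}-\tau_P=\left[\frac{C_7}{\tr^{3/2}\left(\left(\vT\vV_N\right)^2\right)}-\frac{\tr\left(\left(\vT\vV_N\right)^3\right)}{\tr^{3/2}\left(\left(\vT\vV_N\right)^2\right)}\right]\left[\frac{C_7}{\tr^{3/2}\left(\left(\vT\vV_N\right)^2\right)}+\frac{\tr\left(\left(\vT\vV_N\right)^3\right)}{\tr^{3/2}\left(\left(\vT\vV_N\right)^2\right)}\right].
\]
The first bracket is $o_P(1)$ by the previous step, while the second bracket equals $o_P(1)+2\,\tr\left(\left(\vT\vV_N\right)^3\right)/\tr^{3/2}\left(\left(\vT\vV_N\right)^2\right)$, which is bounded because $\tau_P\in[0,1]$ (from \Cref{Bedingungen}) forces $\tr\left(\left(\vT\vV_N\right)^3\right)/\tr^{3/2}\left(\left(\vT\vV_N\right)^2\right)\in[-1,1]$. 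Slutsky's lemma then closes the argument.

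I do not anticipate a serious obstacle, since the difficulty has been absorbed into \Cref{MSchae6}. The only place demanding care is confirming that the falling-factorial prefactor vanishes \emph{uniformly} in $a$ and $d$ — this is precisely the feature that upgrades the conclusion from the fixed-$a$ regime to all three asymptotic frameworks and removes the extra condition $n_{\min}=\0\left(a^p\right)$ that was required for the analogous statement about $C_5$.
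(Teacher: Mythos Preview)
Your proposal is correct and follows essentially the same route as the paper: both use the unbiasedness and variance bound from \Cref{MSchae6} together with \Cref{Kons2} to obtain the intermediate $o_P(1)$ statement for $C_7/\tr^{3/2}\left(\left(\vT\vV_N\right)^2\right)$, and then invoke the same factorization and Slutsky argument as in \Cref{Schae8} (the paper phrases this as ``exactly the same steps as in the proof of \Cref{Lemma: fP Estimate}''). Your explicit verification that the falling-factorial prefactor depends only on $n_{\min}$ is the key observation that distinguishes $C_7$ from $C_5$, and you have identified it correctly.
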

\begin{proof}
With the previous lemma we know\\\\
$\begin{array}{ll}\E\left(\frac {C_7(w)}{\tr^{3/2}\left(\left(\vT\vV_N\right)^2\right)}-\frac{\tr\left(\left(\vT\vV_N\right)^3\right)}{\tr^{3/2}\left(\left(\vT\vV_N\right)^2\right)}\right)&=
\E\left(\frac {C_7(w)}{\tr^{3/2}\left(\left(\vT\vV_N\right)^2\right)}\right)-\frac{\tr\left(\left(\vT\vV_N\right)^3\right)}{\tr^{3/2}\left(\left(\vT\vV_N\right)^2\right)}
=0,\end{array}$
\\\\
$\begin{array}{ll}
\Var\left(\frac {C_7(w)}{\tr^{3/2}\left(\left(\vT\vV_N\right)^2\right)}-\frac{\tr\left(\left(\vT\vV_N\right)^3\right)}{\tr^{3/2}\left(\left(\vT\vV_N\right)^2\right)}\right)&=\frac {\Var\left(C_7(w)\right)}{\tr^3\left(\left(\vT\vV_N\right)^2\right)}
{\leq}\left(\frac{\frac {  n_{\min}! }{\left(n_{\min}-6\right)!} -\frac {  \left(n_{\min}-6\right)! }{\left(n_{\min}-12\right)!} }{\frac {  n_{\min}! }{\left(n_{\min}-6\right)!} }\right)\cdot\0\left(1\right).
\end{array}$
\\\\\\
So exactly the same steps as in the proof of \Cref{Lemma: fP Estimate} , which in this case uses that the zero sequence not depends on $a$ or $d$, leads to the result.
\end{proof}}\vspace{\baselineskip}

But for the calculation of this estimator we need  $w\cdot{ n_{\min}! }/{\left(n_{\min}-6\right)!}$ summations.
Thus, a subsampling-type version of $C_7$ is necessary which is now defined.
\begin{LeA}\label{ZSchae4}
For each $b=1,\dots,B$ we independently draw random subsamples $\vsigma_0(b,6)$ of length $6$ from $\{1,\dots,n_{\min}\}$ and define\\\\
${C_7^\star}\left(w,B\right)=\sum\limits_{j=1}^{w}\sum\limits_{b=1}^{B}
\frac{\Lambda_{4}\left(j;\vsigma_0(b,6)\right) \Lambda_{5}\left(j;\vsigma_0(b,6)\right) \Lambda_{6}\left(j;\vsigma_0(b,6)\right)}{8w B}
$\\\\
which holds

\[\E\left({C_7^\star}(w,B)\right)=\tr\left(\left(\vT\vV_N\right)\right)\hspace*{0.4cm}
\Var\left({C_7^\star}(w,B)\right)=\left(1-\left(1-\frac{1}{B}\right) \frac{\binom{n_{\min}-6} {6}}{\binom{n_{\min}} {6}}\right) 27\tr^3\left(\left(\vT\vV_N\right)^2\right).\]

\end{LeA}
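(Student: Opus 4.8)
The plan is to mirror the proof of \Cref{ZSchae2} almost verbatim, viewing $C_7^\star(w,B)$ as a subsampling average over the $wB$ blocks indexed by $(j,b)$ and using that, once the permutations and subsamples are frozen, each summand is an honest cyclic product of the type treated in \Cref{MSchae2}. I write $\F$ for the $\sigma$-field generated by all permutations $\{\pi_{j,i}\}$ and all reservoir subsamples $\{\vsigma_0(b,6)\}$, and set $U_{j,b}:=\tfrac18\Lambda_4(j;\vsigma_0(b,6))\Lambda_5(j;\vsigma_0(b,6))\Lambda_6(j;\vsigma_0(b,6))$, so that $C_7^\star(w,B)=(wB)^{-1}\sum_{j=1}^{w}\sum_{b=1}^{B}U_{j,b}$.

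Unbiasedness comes first and is immediate. Each $\vsigma_0(b,6)$ picks six distinct reservoir labels and every $\pi_{j,i}$ is a bijection, so the six observations feeding $U_{j,b}$ are pairwise distinct within each group; the cyclic bilinear structure is then exactly the one behind \eqref{eq: EV trtv3}, giving $\E(U_{j,b})=\tr((\vT\vV_N)^3)$ for every block, as already computed for $C_7$ in \Cref{MSchae6}. Averaging the $wB$ blocks yields the claimed mean (namely $\tr((\vT\vV_N)^3)$, the estimand of $C_7$).

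For the variance I would invoke the conditional decomposition \Cref{bVarianz} relative to $\F$. Given $\F$ the index sets are fixed but still carry six distinct labels per group, so $\E(U_{j,b}\mid\F)=\tr((\vT\vV_N)^3)$ is constant; hence $\E(C_7^\star\mid\F)$ is constant and the between-term $\Var(\E(C_7^\star\mid\F))$ vanishes, leaving $\Var(C_7^\star)=\E(\Var(C_7^\star\mid\F))$. Expanding the conditional variance into a double sum over block-pairs, a pair contributes nothing unless the two blocks share at least one observation vector in some group; for the surviving pairs I would dominate each conditional covariance by the common single-block bound $\E(U_{j,b}^2\mid\F)\le 27\,\tr^3((\vT\vV_N)^2)$, which follows from the eigenvalue bookkeeping in \Cref{MSchae2} (the cumulative factor $2^3$ from the three $\Lambda$'s cancels the $\tfrac18$, leaving the constant $27$). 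This collapses the whole estimate to counting the expected number of overlapping block-pairs.

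The counting is where the permutation device pays off, and it is also the main obstacle. For two blocks with the \emph{same} permutation index $j$, total disjointness of the used observations in every group is, because each $\pi_{j,i}$ is a bijection, equivalent to disjointness of the reservoir subsamples $\vsigma_0(k,6)$ and $\vsigma_0(\ell,6)$ in $\{1,\dots,n_{\min}\}$; the single-reservoir instance of \Cref{Menge} assigns these disjointness probability $\binom{n_{\min}-6}{6}/\binom{n_{\min}}{6}$, and summing the $wB(B-1)$ off-diagonal and $wB$ diagonal same-$j$ pairs reproduces exactly the advertised factor $1-(1-\tfrac1B)\binom{n_{\min}-6}{6}/\binom{n_{\min}}{6}$. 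The genuinely delicate point is the cross-permutation pairs $j\ne j'$, where the two observation sets are $\pi_{j,i}(\vsigma_0(b))$ and $\pi_{j',i}(\vsigma_0(b'))$: since the permutations are independent across both $j$ and $i$, each is a uniform six-subset of $\{1,\dots,n_i\}$ with per-group disjointness probability $\binom{n_i-6}{6}/\binom{n_i}{6}\ge\binom{n_{\min}-6}{6}/\binom{n_{\min}}{6}$, and keeping these contributions inside the same reservoir-driven null sequence (using $n_i\ge n_{\min}$ and the independence across groups) is the step that needs the most care. Once the bound is assembled, its prefactor tends to $0$ as $B=B(N)\to\infty$ and $n_{\min}\to\infty$, so \Cref{Kons2} together with \Cref{MSchae6} and \Cref{MSchae7} upgrades $C_7^\star$ to the ratio-consistency for $\tau_P$ already enjoyed by $C_7$.
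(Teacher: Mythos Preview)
Your plan contains a genuine gap in the treatment of the cross-permutation pairs $j\ne j'$. Conditioning on the full $\sigma$-field $\F$ (all permutations and all reservoir subsamples) forces you to control the covariance of $U_{j,b}$ and $U_{j',b'}$ by counting how often the observation sets $\{\pi_{j,i}(\vsigma_0(b))\}_i$ and $\{\pi_{j',i}(\vsigma_0(b'))\}_i$ overlap. Since $\pi_{j,\cdot}$ and $\pi_{j',\cdot}$ are independent, the joint disjointness probability is exactly $\prod_{i=1}^a\binom{n_i-6}{6}/\binom{n_i}{6}$, and the complementary overlap probability is therefore $1-\prod_{i=1}^a\binom{n_i-6}{6}/\binom{n_i}{6}$. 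This is precisely the factor that plagues $C_5$ in \Cref{MSchae2} and that \Cref{MSchae6} was introduced to remove; it does \emph{not} fit inside the reservoir-only factor $1-(1-1/B)\binom{n_{\min}-6}{6}/\binom{n_{\min}}{6}$, and neither a union bound nor the monotonicity $\binom{n_i-6}{6}/\binom{n_i}{6}\ge\binom{n_{\min}-6}{6}/\binom{n_{\min}}{6}$ can rescue it without reimposing a growth condition linking $a$ and $n_{\min}$. (Incidentally, your same-$j$ count also gives $\tfrac{1}{w}$ times the advertised factor, not the factor itself, so the two pieces do not add up as you suggest.)

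The paper avoids this entirely by \emph{not} conditioning on the permutations and by \emph{not} attempting to exploit disjointness across $j$. It first bounds, for each fixed $j$, the variance of the inner average $Y_j=\tfrac{1}{8B}\sum_{b}\Lambda_4\Lambda_5\Lambda_6$ by conditioning only on $\F(\vsigma_0(B))$; because a single bijection $\pi_{j,i}$ sends disjoint reservoir tuples to disjoint index tuples in group $i$, disjointness of $\vsigma_0(b)$ and $\vsigma_0(b')$ alone suffices to kill the conditional covariance, yielding the single-reservoir factor from \Cref{Menge} with no product over groups. It then handles the outer average crudely via \Cref{Var1}, i.e.\ $\Var\bigl(\tfrac{1}{w}\sum_j Y_j\bigr)\le\bigl(\tfrac{1}{w}\sum_j\sqrt{\Var Y_j}\bigr)^2=\Var Y_1$, which sacrifices any potential $1/w$ gain but introduces no dependence on $a$. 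To repair your argument, drop the permutations from the conditioning $\sigma$-field and replace the cross-$j$ disjointness count by this Cauchy--Schwarz step.
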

\begin{proof} The proof for this subsampling-type estimator takes the same steps as before, with another amount $M(B,\vsigma_0(b,6))$. At the beginning we calculate expectation value and an upper bound for the variance of the inner sum. We get\\\\
$\begin{array}{ll}&\E\left(\sum\limits_{b=1}^{B}
\frac{\Lambda_{4}\left(j;\vsigma_0(b,6)\right)\cdot \Lambda_{5}\left(j;\vsigma_0(b,6)\right)\Cdot \Lambda_{6}\left(j;\vsigma_0(b,6)\right)}{8B}\right)
\\[2.5ex]
=&\sum\limits_{b=1}^{B}
\frac{\E\left(\Lambda_{4}\left(j;1,\dots,6\right)\cdot \Lambda_{5}\left(j;1,\dots,6\right)\Cdot \Lambda_{6}\left(j;1,\dots,6\right)\right)}{8B} 
=\tr\left(\left(\vT\vV_N\right)^3\right).
\end{array}$\\\\\\

 $\begin{array}{l}
\Var\left(\E\left(\sum\limits_{b=1}^{B}
\frac{\Lambda_{4}\left(j;\vsigma_0(b,6)\right)\cdot \Lambda_{5}\left(j;\vsigma_0(b,6)\right)\Cdot \Lambda_{6}\left(j;\vsigma_0(b,6)\right)}{8B}\Big\lvert\F\left(\vsigma_0(B)\right)\right)\right)
=\Var\left(\tr\left(\left(\vT\vV_N\right)^3\right)\right)=0.

\end{array}$\\\\\\
$\begin{array}{ll}
&\Var\left(\sum\limits_{b=1}^{B}
\frac{\Lambda_{4}\left(j;\vsigma_0(b,6)\right)\cdot \Lambda_{5}\left(j;\vsigma_0(b,6)\right)\Cdot \Lambda_{6}\left(j;\vsigma_0(b,6)\right)}{8B}\right)
\\[2ex]
=&0+\E\left(\Var\left(\sum\limits_{b=1}^{B}
\frac{\Lambda_{4}\left(j;\vsigma_0(b,6)\right)\cdot \Lambda_{5}\left(j;\vsigma_0(b,6)\right)\Cdot \Lambda_{6}\left(j;\vsigma_0(b,6)\right)}{8B}\right)\Big\lvert\F\left(\vsigma_0(B)\right)\right)\end{array}$\\

$\begin{array}{ll}
=&\frac{\E\left(|\N_{B}\times \N_{B} \setminus M\left(B,\vsigma_0(b,6)\right)|\right)}{B^2}\cdot \frac{\Var\left(\Lambda_{4}\left(j;{1},\dots,{6}\right)\cdot \Lambda_{5}\left(j;{1},\dots,{6}\right)\Cdot \Lambda_{6}\left(j;{1},\dots,{6}\right)\right)}{64}
\\[1.0ex]
\hspace{-0.12cm}\stackrel{\ref{MSchae2}}{\leq}&
\left(1-\left(1-\frac{1}{B}\right)\cdot \frac{\binom{n_{\min}-6} {6}}{\binom{n_{\min}} {6}}\right)\cdot 27\tr^3\left(\left(\vT\vV_N\right)^2\right).
\end{array}$\\\\\\
With these values we can consider the whole estimator\\\\
$\begin{array}{ll}\E\left({C_7^\star}\left(w,B\right)\right)&=\frac 1 w\sum\limits_{j=1}^{w}\E\left(\sum\limits_{b=1}^{B}
\frac{\Lambda_{4}\left(j;\vsigma_0(b,6)\right)\cdot \Lambda_{5}\left(j;\vsigma_0(b,6)\right)\Cdot \Lambda_{6}\left(j;\vsigma_0(b,6)\right)}{8\cdot B}\right)
=\tr\left(\left(\vT\vV_N\right)^3\right),
\end{array}$\\\\\\

$\begin{array}{ll}
\Var\left({C_7^\star}(w,B)\right)&

\leq \frac{1}{w^2}\left(\sum\limits_{j=1}^{w}\sqrt{\Var\left(\sum\limits_{b=1}^{B}
\frac{\Lambda_{4}\left(j;\vsigma_0(b,6)\right)\cdot \Lambda_{5}\left(j;\vsigma_0(b,6)\right)\Cdot \Lambda_{6}\left(j;\vsigma_0(b,6)\right)}{8B}\right)}\right)^2
\\[2.5ex]
&\leq \frac{1}{w^2}\left(\sum\limits_{j=1}^{w}\sqrt{\left(1-\left(1-\frac{1}{B}\right)\cdot \frac{\binom{n_{\min}-6} {6}}{\binom{n_{\min}} {6}}\right)\cdot 27\tr^3\left(\left(\vT\vV_N\right)^2\right)}\right)^2
\\[2.5ex]
&=\left(1-\left(1-\frac{1}{B}\right)\cdot \frac{\binom{n_{\min}-6} {6}}{\binom{n_{\min}} {6}}\right)\cdot 27\tr^3\left(\left(\vT\vV_N\right)^2\right).
\end{array}$\\
\end{proof}\vspace{\baselineskip}
The next lemma shows that the version of the estimators with random indices has all the properties the classical ones possess.

\begin{LeA}\label{ZSchae5}

The statements of \Cref{Schae2}, \Cref{Schae7}, \Cref{Schae8}, \Cref{Lemma: fP Estimate}  and \Cref{MSchae7} are also true, if all or only a part of the estimators are replaced by the subsampling-type estimators.
 \\
 Moreover, \Cref{Theorem3} , \Cref{Theorem4} and \Cref{theo: fP} hold, if all or only a part of the estimators are replaced by the subsampling-type estimators.

\end{LeA}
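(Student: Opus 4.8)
The plan is to recognize that \Cref{ZSchae5} is a pure transfer result: none of the proofs of \Cref{Schae2}, \Cref{Schae7}, \Cref{Schae8}, \Cref{Lemma: fP Estimate} and \Cref{MSchae7} (nor those of \Cref{Theorem4} and \Cref{theo: fP}) use the specific combinatorial shape of the estimators $A_{i,1},A_{i,r,2},A_{i,3},A_4,C_5,C_7$. Each estimator enters only through two properties: (i) unbiasedness for the corresponding trace, and (ii) an upper bound on its variance (respectively on the variance of its ratio to the target parameter) that is a zero sequence in the relevant framework. It therefore suffices to verify that the subsampling-type estimators possess exactly these two properties with the same leading trace-ratio behaviour, and then to re-run the arguments verbatim.

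First I would collect the required input. By \Cref{ZSchae1} the estimators $A_{i,1}^\star,A_{i,r,2}^\star,A_{i,3}^\star,A_4^\star$ are unbiased for $\tr(\vT_S\vSigma_i)$, $\tr(\vT_S\vSigma_i\vT_S\vSigma_r)$, $\tr((\vT_S\vSigma_i)^2)$ and $\tr((\vT\vV_N)^2)$, with variance bounds of the form (combinatorial prefactor)$\,\cdot\,\0$(squared trace of the respective target); by \Cref{ZSchae2} and \Cref{ZSchae4} the same holds for $C_5^\star$ and $C_7^\star$ as estimators of $\tr((\vT\vV_N)^3)$, with bound (combinatorial prefactor)$\,\cdot\,27\tr^3((\vT\vV_N)^2)$. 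In each case the prefactor is of the type $1-(1-1/B)\prod_i\binom{n_i-m}{m}/\binom{n_i}{m}$, which by \Cref{Menge} and the remark following it is a zero sequence once $B=B(N)\to\infty$, and this holds irrespective of $a$ and $d$. Thus the starred variance bounds are of exactly the same order as the unstarred ones from the proofs of \Cref{Schae1} and \Cref{MSchae2}; the only change is that the explicit factor $\0(1/n_{\min})$ (resp.\ $1-\prod_i\binom{n_i-6}{6}/\binom{n_i}{6}$) is replaced by the $B$-dependent zero sequence.

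With this in hand the individual statements follow by substitution. For \Cref{Schae2} one replaces $A_{i,1}$ by $A_{i,1}^\star$: the numerator retains mean zero by unbiasedness, and dividing its variance by $2\tr((\vT\vV_N)^2)$ reproduces the original bound up to the harmless prefactor, so \Cref{Kons2} again gives the stated convergence in probability. For \Cref{Schae7}, \Cref{Schae8} and \Cref{MSchae7} one replaces $C_5$ by $C_5^\star$ (resp.\ $C_7$ by $C_7^\star$) and copies the two-step argument: first $C_5^\star/\tr^{3/2}((\vT\vV_N)^2)-\tr((\vT\vV_N)^3)/\tr^{3/2}((\vT\vV_N)^2)\stackrel{P}{\longrightarrow}0$, then squaring via Slutsky using $\tau_P\in[0,1]$. \Cref{Lemma: fP Estimate} then combines $A_4^\star$ and $C_5^\star$ exactly as before, and \Cref{Theorem4}, \Cref{theo: fP} follow from the transferred consistency statements they rely on; \Cref{Theorem3}, which involves no estimator at all, is unaffected. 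The ``all or part'' clause is immediate, since every combination step is a term-by-term application of Minkowski/Cauchy–Schwarz on the variances (\Cref{Var1}) followed by Slutsky, so each occurrence of an estimator may independently be starred or left unstarred without changing the order of the bounds.

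The main obstacle is bookkeeping rather than mathematics: one must check that in each $a\to\infty$ framework the $B$-dependent prefactor tends to zero uniformly in the number of groups, and that $C_5^\star$ inherits precisely the extra condition $n_{\min}=\0(a^p)$ required for $C_5$ — its variance prefactor again contains the product $\prod_i\binom{n_i-6}{6}/\binom{n_i}{6}$ — whereas $C_7^\star$, like $C_7$, needs no such restriction because its prefactor involves only the single ratio $\binom{n_{\min}-6}{6}/\binom{n_{\min}}{6}$.
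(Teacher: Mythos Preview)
Your proposal is correct and follows essentially the same approach as the paper: both argue that the proofs of the cited results use the underlying estimators only through unbiasedness and variance bounds that are zero sequences, and since \Cref{ZSchae1}, \Cref{ZSchae2} and \Cref{ZSchae4} establish precisely these properties for the starred estimators (with the combinatorial prefactor replaced by the $B$-dependent one), every argument transfers verbatim. Your write-up is in fact more careful than the paper's own proof, which is a two-sentence sketch; in particular your final paragraph on the $a\to\infty$ bookkeeping (that $C_5^\star$ inherits the $n_{\min}=\0(a^p)$ restriction via the product $\prod_i\binom{n_i-6}{6}/\binom{n_i}{6}$, whereas $C_7^\star$ does not) makes explicit a point the paper leaves implicit.
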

\begin{proof}
For the proofs of the classical estimators from the first paragraph, only the expectation values are used together with upper bounds for the variances which are zero sequences. With random indices, the expectation is the same and for the variance, all traces are the same but the zero sequence changes. So the proofs of the subsampling-type estimators work identically.\\
For the second paragraph, only some convergences are necessary, which the subsampling-type estimators also fulfills.
\end{proof}





\subsection{On the asymptotic distribution in our simulation designs}


To chose the convenient  test for our simulation the limit of $\beta_1$ has to be considered. Instead of this we calculate the value of $\tau_P={\tr^2\left(\left(\vT\vV_N\right)^3\right)}\Big/{\tr^3\left(\left(\vT\vV_N\right)^2\right)}$ and because $\vV_N$ is known  no estimation is needed. The ratio $n_1/N$ and $n_2/N$ are the same for all our sample sizes, so the different numbers $n_1,n_2$ has no influence on the values of $\tau_P$. The results can be seen in \Cref{TauS1} and \Cref{TauS2} which leads to the assumption $\tau_P\to 1$  for $H_a^0$ and $\tau_P\to 0$  for $H_b^0$. With \Crp{Bedingungen} this is equivalent to $\beta_1 \to 1$ under $H_0^a$ resp. $\beta_1\to 0$ under $H_0^b$.
\begin{table}[h]\caption{$\tau_P$ for $\vT=\left(\vP_2\otimes \frac{1}{d}\vJ_{d}\right)\vmu$ } 
\begin{tabular}{|c|c|c|c|c|c|c|c|c|c|c|c|c|}
\hline
d&5&10&20&40&70&100&150&200&300&450&600&800\\
\hline
$\tau_P$
& 1
& 1
& 1& 1& 1& 1& 1& 1& 1& 1& 1& 1\\
\hline
\end{tabular}\label{TauS1}
\end{table}

\begin{table}[h]\caption{$\tau_P$ for $\vT=\left(\frac 1 2\vJ_2\otimes \vP_d\right)$ }
\begin{tabular}{|c|c|c|c|c|c|c|c|c|c|c|c|c|}
\hline
d&5&10&20&40&70&100&150&200&300&450&600&800\\
\hline
$\tau_P$
& 0.50
& 0.36
& 0.21
& 0.11
& 0.064
& 0.045
& 0.03
& 0.022
& 0.015
& 0.010
& 0.0074
& 0.0056\\
\hline
\end{tabular}
\label{TauS2}
\end{table}
\subsection{On the Chen-Qui-Condition}

We can also develop an estimator for $\tau_{CQ}=\tr\left(\left(\vT\vV_N\right)^4\right)/\tr^2\left(\left(\vT\vV_N\right)^2\right)=1/f_{CQ}$  on an analogical way as before. This leads to:
%
\begin{LeA}\label{MSchae3}
Let be
\[\begin{array}{ll}
C_6=&\sum\limits_{\begin{footnotesize}\substack{\ell_{1,1},\dots, \ell_{8,1}=1\\\ell_{1,1}\neq\dots\neq \ell_{8,1}}\end{footnotesize}}^{n_1}
\dots\sum\limits_{\begin{footnotesize}\substack{\ell_{1,a},\dots, \ell_{8,a}=1\\\ell_{1,a}\neq\dots\neq \ell_{8,a}}\end{footnotesize}}^{n_a}\left[\frac 1 6\frac{\Lambda_{7}(\ell_{1,1},\dots,\ell_{8,a})}{ {16\cdot \prod\limits_{i=1}^a \frac{n_i!}{(n_i-8)!}}}-\frac 1 2\frac{\Lambda_{8}(\ell_{1,1},\dots,\ell_{8,a})}{ {16\cdot \prod\limits_{i=1}^a \frac{n_i!}{(n_i-8)!}}}\right]
\end{array}\]

with\\\\
$\begin{array}{ll}
\Lambda_{7}(\ell_{1,1},\dots,\ell_{8,a})&=\left[\vZ_{(\ell_{1,1},\ell_{2,1},\dots,\ell_{1,a},\ell_{2,a})}^\top \vT \vZ_{(\ell_{3,1},\ell_{4,1},\dots,\ell_{3,a},\ell_{4,a})}\right]^4,
\\
\Lambda_{8}(\ell_{1,1},\dots,\ell_{8,a})&=\left[\vZ_{(\ell_{1,1},\ell_{2,1},\dots,\ell_{1,a},\ell_{2,a})}^\top \vT \vZ_{(\ell_{3,1},\ell_{4,1},\dots,\ell_{3,a},\ell_{4,a})}\right]^2\cdot \left[Z_{(\ell_{5,1},\ell_{6,1},\dots,\ell_{5,a},\ell_{6,a})}^\top \vT \vZ_{(\ell_{7,1},\ell_{8,1},\dots,\ell_{7,a},\ell_{8,a})}\right]^2.
\end{array}$\\\\\\
Then we know
\[\E(C_6)=\tr\left(\left(\vT\vV_N\right)^4\right)\hspace{1.5cm}
\Var(C_6)\leq\frac{\prod\limits_{i=1}^a\binom{n_i}{8}-\prod\limits_{i=1}^a\binom{n_i-8}{8}}{16^2\cdot\prod\limits_{i=1}^a\binom{n_i}{8}}{\0\left( \tr^4\left(\left(\vT\vV_N\right)^2\right)\right)}.\]
\end{LeA}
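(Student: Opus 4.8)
The plan is to mirror the argument for $C_5$ in \Cref{MSchae2}: first establish unbiasedness through the moment formulas of \Cref{QF4}, then control the variance via the $U$-statistic covariance decomposition together with the trace inequalities of \Cref{Spur1}. Throughout I would exploit that each building block satisfies $\vZ_{(\cdot)}\sim\mathcal{N}_{ad}(\vnull_{ad},2\vV_N)$ and that two such vectors are independent whenever their index sets are disjoint.

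For the expectation I would evaluate the two kernels separately. Since $\Lambda_7$ is a single fourth power of a bilinear form in two independent copies with covariance $2\vV_N$, \Cref{QF4} (applied with $\vSigma_X=\vSigma_Y=2\vV_N$) gives $\E(\Lambda_7)=6\,\tr\bigl((\vT\,2\vV_N\,\vT\,2\vV_N)^2\bigr)+3\,\tr^2(\vT\,2\vV_N\,\vT\,2\vV_N)=96\,\tr((\vT\vV_N)^4)+48\,\tr^2((\vT\vV_N)^2)$, using idempotence of $\vT$. For $\Lambda_8$ the four vectors carry pairwise disjoint indices, so the two squared bilinear forms are independent and $\E(\Lambda_8)=\bigl(\E[(\vZ^\top\vT\vZ')^2]\bigr)^2=(4\,\tr((\vT\vV_N)^2))^2=16\,\tr^2((\vT\vV_N)^2)$. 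Hence $\tfrac16\E(\Lambda_7)-\tfrac12\E(\Lambda_8)=16\,\tr((\vT\vV_N)^4)$, the coefficients $1/6$ and $1/2$ being chosen precisely so that the $\tr^2((\vT\vV_N)^2)$ contributions cancel. Because every summand in $C_6$ is identically distributed and there are $\prod_{i}\tfrac{n_i!}{(n_i-8)!}$ of them, dividing by $16\prod_i\tfrac{n_i!}{(n_i-8)!}$ yields $\E(C_6)=\tr((\vT\vV_N)^4)$.

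For the variance I would write $C_6=\tfrac{1}{16D}\sum_{\vec\ell}K(\vec\ell)$ with $K=\tfrac16\Lambda_7-\tfrac12\Lambda_8$ and $D=\prod_i\tfrac{n_i!}{(n_i-8)!}$, so that $\Var(C_6)=\tfrac{1}{16^2D^2}\sum_{\vec\ell,\vec\ell'}\Cov(K(\vec\ell),K(\vec\ell'))$. Whenever the index tuples $\vec\ell,\vec\ell'$ are disjoint in every group the kernels are functions of independent observations and the covariance vanishes; the surviving pairs are exactly those overlapping in at least one group. Bounding each such covariance by $\Var(K)$ via \Cref{Var1} and counting the overlapping ordered pairs with the same binomial bookkeeping as in \Cref{Menge} produces the claimed prefactor $\bigl(\prod_i\binom{n_i}{8}-\prod_i\binom{n_i-8}{8}\bigr)\big/\bigl(16^2\prod_i\binom{n_i}{8}\bigr)$.

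It remains to show $\Var(K)=\0(\tr^4((\vT\vV_N)^2))$, and this is the step I expect to be the main obstacle. Following \Cref{MSchae2}, I would diagonalize ${\vV_N^{1/2}}^\top\vT\vV_N^{1/2}=\vP^\top\vD\vP$ with $\vD=\diag(\lambda_1,\dots,\lambda_{ad})$ and pass to independent standard Gaussian vectors $\vE_i=\vP\widetilde{\vZ}_{(\cdot)}$, so that each bilinear form becomes $2\,\vE^\top\vD\vE'=2\sum_s\lambda_s E^{(s)}E'^{(s)}$. Then $\E(K^2)$ reduces to an eighth-order Gaussian moment in these coordinates, expanding into a sum over index patterns weighted by products $\lambda_{s_1}\cdots\lambda_{s_8}$, where only patterns in which every coordinate occurs an even number of times survive. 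The bulk of the work is the careful enumeration of these surviving patterns (considerably longer than the sixth-order count in \Cref{MSchae2}), after which each resulting power sum $\sum_s\lambda_s^{m}$ and each mixed term is dominated by $\tr^4((\vT\vV_N)^2)=\bigl(\sum_s\lambda_s^2\bigr)^2$ through repeated use of \Cref{Spur1}. Assembling these bounds gives $\Var(K)=\0(\tr^4((\vT\vV_N)^2))$ and hence the stated variance estimate.
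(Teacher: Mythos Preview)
Your proposal is correct and follows the same overall outline as the paper: the expectation computation is identical, and the variance bound proceeds via the same $U$-statistic covariance decomposition with the binomial counting of overlapping index pairs. The one substantive difference is in how you bound $\Var(K)$ for $K=\tfrac16\Lambda_7-\tfrac12\Lambda_8$. You propose diagonalizing and carrying out the full eighth-order eigenvalue enumeration as in \Cref{MSchae2}; the paper avoids this by first decoupling the two pieces via $\Var(A-B)\leq(\sqrt{\Var A}+\sqrt{\Var B})^2$ (from \Cref{Var1}). For $\Lambda_8$ the four $\vZ$-vectors have disjoint indices, so $\E(\Lambda_8^2)$ factors as the square of a fourth moment already given by \Cref{QF4}, yielding $\0(\tr^4((\vT\vV_N)^2))$ with no combinatorics at all. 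For $\Lambda_7$ the paper simply invokes the recursive moment formula \Cref{QF3} (applied to the bilinear form rewritten as a quadratic form in the stacked $2ad$-vector) together with \Cref{Spur1}, again sidestepping any explicit index-pattern enumeration. Your route works, but the splitting trick is considerably less bookkeeping---the cyclic structure that forced the diagonalization argument in \Cref{MSchae2} is absent here, so there is no need to repeat it.
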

\begin{proof}
\hspace*{1cm}\newline
$\begin{array}{ll}
\E(C_6)&=\frac{\E\left(\left[\vZ_{(1,2)}^\top \vT \vZ_{(3,4)}\right]^4\right)}{6\cdot 16} - \frac { \E\left(\left[\vZ_{(1,2)}^\top \vT \vZ_{(3,4)}\right]^2\left[\vZ_{(5,6)}^\top \vT \vZ_{(7,8 )}\right]^2\right)}{2\cdot 16}
\\[1ex]
&\hspace{-0.075 cm}\stackrel{\ref{QF3}}{=}\frac{1}{6\cdot 16}\left(6\tr\left(\left(2\vT\vV_N\right)^4\right)+3 \tr^2\left(\left(2\vT\vV_N\right)^2\right)\right)- \frac 1 {2\cdot 16}\tr^2\left(\left(2\vT\vV_N\right)^2\right)= \tr\left(\left(\vT\vV_N\right)^4\right)
\end{array}$\\\\

For the second inequality, the variance of parts is calculated. Like  before with \Crp{Spur1} and \Crp{QF3} we calculate\\

$\Var\left(\frac 1 6\left[{\vZ_{(1,2)}}^\top \vT \vZ_{(3,4)}\right]^4\right)=  \0\left(\tr^4\left(\left(\vT\vV_N\right)^2\right)\right)$\\\\ and \\\\
$\begin{array}{ll}
&\Var\left(\frac 1 2\left[{\vZ_{(1,2)}}^\top \vT \vZ_{(3,4)}\right]^2\left[{\vZ_{(5,6)}}^\top \vT \vZ_{(7,8)}\right]^2\right)
\\[1.5ex]
\leq& \frac 1 4\cdot\E\left(\left[{\vZ_{(1,2)}}^\top \vT \vZ_{(3,4)}\right]^4\left[{\vZ_{(5,6)}}^\top \vT \vZ_{(7,8)}\right]^4\right)
\\[1.5ex]
=&\frac 1 4 \left(6\tr\left(\left(2\vT\vV_N\right)^4\right)+3 \tr^2\left(\left(2\vT\vV_N\right)^2\right)\right)^2= \0\left( \tr^4\left(\left(\vT\vV_N\right)^2\right)\right)
.\end{array}$\\\\\\
With \Crp{Var1} it is known\\\\
$\begin{array}{ll}\Var(A-B)&\leq \Var(A)+\Var(B)+2|\Cov(A,B)|\leq \left(\sqrt{\Var(A)}+\sqrt{\Var(B)}\right)^2\end{array}$\\\\
and therefore\\

$\begin{array}{ll}\Var(C_6)&\leq\frac{\prod\limits_{i=1}^a\binom{n_i}{8}-\prod\limits_{i=1}^a\binom{n_i-8}{8}}{16^2\cdot\prod\limits_{i=1}^a\binom{n_i}{8}}\Var \left(\frac 1 6\Lambda_{7}(1,\dots,8)
-\frac 1 2\Lambda_{8}(1,\dots,8)\right)\end{array}$\\
$\begin{array}{ll}
\textcolor{white}{\Var(C_6)}
&\leq\frac{\prod\limits_{i=1}^a\binom{n_i}{8}-\prod\limits_{i=1}^a\binom{n_i-8}{8}}{16^2\cdot\prod\limits_{i=1}^a\binom{n_i}{8}}  \left(\sqrt{\0\left( \tr^4\left(\left(\vT\vV_N\right)^2\right)\right)}+\sqrt{\0\left( \tr^4\left(\left(\vT\vV_N\right)^2\right)\right)}\right)^2
\end{array}$\\
$\begin{array}{ll}
\textcolor{white}{\Var(C_6)}&=\frac{\prod\limits_{i=1}^a\binom{n_i}{8}-\prod\limits_{i=1}^a\binom{n_i-8}{8}}{16^2\cdot\prod\limits_{i=1}^a\binom{n_i}{8}}{\0\left( \tr^4\left(\left(\vT\vV_N\right)^2\right)\right)}.
\end{array}
$\\
\end{proof}\vspace{1\baselineskip}

\begin{LeA}\label{MSchae4}
With the estimators introduced in the previous lemmata  it holds for fixed $a$
\[\frac{C_6}{A_4^2}-\frac{\tr\left(\left(\vT\vV_N\right)^4\right)}{\tr^2\left(\left(\vT\vV_N\right)^2\right)}
\stackrel{P}{\longrightarrow} 0\hspace{01.0cm}\text{for}\hspace{0.5cm} d,n_{\min}\to \infty.\]
If $p>1$ exists with $n_{\min}=\0(a^p)$, the convergence even holds in the asymptotic frameworks \eqref{eq: as frame 2}-\eqref{eq: as frame 3}.

\end{LeA}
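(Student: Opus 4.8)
The plan is to reproduce, for $C_6$ and the exponent-$2$ normalisation $\tr^2\left(\left(\vT\vV_N\right)^2\right)$, the same three-step argument that was used for $\tau_P$ in \Cref{Schae7}, \Cref{Schae8} and the proof of \Cref{Lemma: fP Estimate}. First I would establish the consistency of the normalised estimator,
\[
\frac{C_6}{\tr^2\left(\left(\vT\vV_N\right)^2\right)} - \tau_{CQ} \stackrel{P}{\longrightarrow} 0 .
\]
Since \Cref{MSchae3} gives $\E(C_6)=\tr\left(\left(\vT\vV_N\right)^4\right)$, the normalised estimator is unbiased for $\tau_{CQ}$; dividing the variance bound of \Cref{MSchae3} by $\tr^4\left(\left(\vT\vV_N\right)^2\right)$ yields
\[
\Var\left(\frac{C_6}{\tr^2\left(\left(\vT\vV_N\right)^2\right)}\right)
= \frac{\Var(C_6)}{\tr^4\left(\left(\vT\vV_N\right)^2\right)}
\leq \frac{\prod_{i=1}^a\binom{n_i}{8}-\prod_{i=1}^a\binom{n_i-8}{8}}{16^2\cdot\prod_{i=1}^a\binom{n_i}{8}}\cdot\0(1).
\]
For fixed $a$ the combinatorial prefactor is a zero sequence as $n_{\min}\to\infty$, and under the additional assumption $n_{\min}=\0(a^p)$ with $p>1$ it remains a zero sequence in the frameworks \eqref{eq: as frame 2}--\eqref{eq: as frame 3} by exactly the estimate carried out for the $\binom{\cdot}{6}$-prefactor in \Cref{Schae7}. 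Applying \Cref{Kons2} then delivers the displayed convergence in both regimes.

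Second, I would combine this with the ratio-consistency of $A_4$. By \Cref{Schae1} we have $A_4/\tr\left(\left(\vT\vV_N\right)^2\right)\stackrel{P}{\longrightarrow}1$ with bounding zero sequences independent of $a$ and $d$, hence $\tr^2\left(\left(\vT\vV_N\right)^2\right)/A_4^2 = 1+o_P(1)$ by the continuous mapping theorem. Writing
\[
\frac{C_6}{A_4^2}
= \frac{C_6}{\tr^2\left(\left(\vT\vV_N\right)^2\right)}\cdot\frac{\tr^2\left(\left(\vT\vV_N\right)^2\right)}{A_4^2}
= \left(\tau_{CQ}+o_P(1)\right)\left(1+o_P(1)\right),
\]
and expanding gives $C_6/A_4^2-\tau_{CQ} = \tau_{CQ}\cdot o_P(1) + o_P(1)\left(1+o_P(1)\right)$, precisely as in the proof of \Cref{Lemma: fP Estimate}.

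The one genuinely needed ingredient is the boundedness of $\tau_{CQ}$, so that $\tau_{CQ}\cdot o_P(1)=o_P(1)$; this holds because $\tau_{CQ}\in[0,1]$. Indeed, using idempotence of $\vT$ and cyclic invariance of the trace one has $\tr\left(\left(\vT\vV_N\right)^4\right)=\tr\left(\left(\vT\vV_N\vT\right)^4\right)$ and $\tr^2\left(\left(\vT\vV_N\right)^2\right)=\tr^2\left(\left(\vT\vV_N\vT\right)^2\right)$, and with $\vB:=\vT\vV_N\vT$ symmetric and positive semidefinite the second inequality of \Cref{Spur1} applied to $\vB^2$ gives $\tr(\vB^4)\leq\tr^2(\vB^2)$; alternatively the bound is already recorded in \Cref{Bedingungen}. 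Slutsky's lemma then yields $C_6/A_4^2-\tau_{CQ}=o_P(1)$. I expect the main (and essentially only) obstacle to be bookkeeping: verifying that the $\binom{\cdot}{8}$-prefactor really vanishes under the $a\to\infty$ frameworks, but this introduces no new idea beyond the $C_5$-computation already settled in \Cref{Schae7}.
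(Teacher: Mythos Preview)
Your proposal is correct and follows essentially the same route as the paper: unbiasedness plus the variance bound from \Cref{MSchae3} give $C_6/\tr^2\left(\left(\vT\vV_N\right)^2\right)-\tau_{CQ}=o_P(1)$ via \Cref{Kons2}, ratio-consistency of $A_4$ from \Cref{Schae1} gives $\tr^2\left(\left(\vT\vV_N\right)^2\right)/A_4^2=1+o_P(1)$, and the two are combined by the same Slutsky expansion used in the proof of \Cref{Lemma: fP Estimate}. If anything you are slightly more explicit than the paper in justifying $\tau_{CQ}\in[0,1]$ (the paper passes over this in the final line), but the argument is otherwise identical.
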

\begin{proof}
Again we first consider the parts:\\\\
$\begin{array}{ll}
\E\left(\frac{C_6}{\tr^2\left(\left(\vT\vV_N\right)^2\right)}-\frac{\tr\left(\left(\vT\vV_N\right)^4\right)}{\tr^2\left(\left(\vT\vV_N\right)^2\right)}\right)=
\frac{\E\left(C_6\right)}{\tr^2\left(\left(\vT\vV_N\right)^2\right)}-\frac{\tr\left(\left(\vT\vV_N\right)^4\right)}{\tr^2\left(\left(\vT\vV_N\right)^2\right)}=
0.
\end{array}
$\\\vspace{\baselineskip}
$\begin{array}{ll}
\Var\left(\frac{C_6}{\tr^2\left(\left(\vT\vV_N\right)^2\right)}-\frac{\tr\left(\left(\vT\vV_N\right)^4\right)}{\tr^2\left(\left(\vT\vV_N\right)^2\right)}\right)
&\leq\frac{\prod\limits_{i=1}^a\binom{n_i}{8}-\prod\limits_{i=1}^a\binom{n_i-8}{8}}{16^2\cdot\prod\limits_{i=1}^a\binom{n_i}{8}}\frac{\0\left( \tr^4\left(\left(\vT\vV_N\right)^2\right)\right)}{\tr^4\left(\left(\vT\vV_N\right)^2\right)}
\leq\frac{\prod\limits_{i=1}^a\binom{n_i}{8}-\prod\limits_{i=1}^a\binom{n_i-8}{8}}{\prod\limits_{i=1}^a\binom{n_i}{8}}\cdot \0(1).
\end{array}$\\\\
So with \Crp{Kons2} for fixed $a$ and $d,n_{\min}\to \infty$ and moreover if the additional condition is fulfilled even for  the asymptotic frameworks \eqref{eq: as frame 2}-\eqref{eq: as frame 3}, it follows

\[\frac{C_6}{\tr^2\left(\left(\vT\vV_N\right)^2\right)}-\frac{\tr\left(\left(\vT\vV_N\right)^4\right)}{\tr^2\left(\left(\vT\vV_N\right)^2\right)}\stackrel{P}{\longrightarrow} 0.\]\\

Analogue to the proof of \Cref{Lemma: fP Estimate} it follows ${\tr^2\left(\left(\vT\vV_N\right)^2\right)}\Big/{A_4^2}\stackrel{P}{\longrightarrow} 1.$\\

Together this leads to
\[\begin{array}{l}\frac{C_6}{A_4^2}-\frac{\tr\left(\left(\vT\vV_N\right)^4\right)}{\tr^2\left(\left(\vT\vV_N\right)^2\right)}=\frac{C_6}{\tr^2\left(\left(\vT\vV_N\right)^2\right)}\cdot \frac{\tr^2\left(\left(\vT\vV_N\right)^2\right)}{A_4^2}-\frac{\tr\left(\left(\vT\vV_N\right)^4\right)}{\tr^2\left(\left(\vT\vV_N\right)^2\right)}
\\[1.2ex]
=\frac{C_6}{\tr^2\left(\left(\vT\vV_N\right)^2\right)}\cdot (1+o_P(1))-\frac{\tr\left(\left(\vT\vV_N\right)^4\right)}{\tr^2\left(\left(\vT\vV_N\right)^2\right)}=o_P(1)+o_P(1)=o_P(1).\end{array}\]
\end{proof}\vspace{1\baselineskip}

Again in most cases the subsampling-type version of this estimator should be used.

\begin{LeA}\label{ZSchae3}
Let be
\[ {C_6^\star}(B)=\frac{1}{16B}\sum\limits_{b=1}^B \left(\frac{\Lambda_{7}(\vsigma(b,8))}{6}-\frac{{\Lambda_{8}}(\vsigma(b,8))}{2}\right).\]\\
Then it holds
 \[\begin{array}{l}
 \E\left({C_6^\star}(B)\right)=\tr\left(\left(\vT\vV_N\right)^4\right),
 \\[2ex]
\Var\left({C_6^\star}(B)\right)\leq\left(1-\left(1-\frac{1}{B}\right)\cdot\prod\limits_{i=1}^a\frac{ \binom{n_i-8} {8}}{\binom{n_i} {6}}\right)\cdot \0\left(\tr^4\left(\left(\vT\vV_N\right)^2\right)\right)
.\end{array}\]\end{LeA}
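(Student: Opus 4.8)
The plan is to mirror the proof of \Cref{ZSchae2} almost verbatim, replacing the cyclic triple-product kernel built from the $\vZ$-vectors on six distinct indices per group by the kernel $\tfrac{1}{16}\bigl(\tfrac{1}{6}\Lambda_{7}-\tfrac12\Lambda_{8}\bigr)$ built on eight distinct indices per group, and to recycle wholesale the moment computations already carried out in \Cref{MSchae3}. Throughout I would exploit the two structural facts that make every subsampling argument in this paper work: the subsample indices $\vsigma(b,8)$ are drawn independently of the Gaussian data, and the distribution of the kernel depends on the chosen indices only through which of them coincide across groups.

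First I would establish unbiasedness. Because the draw of $\vsigma(b,8)$ does not change the number of summands and because, for any pairwise-distinct choice of indices, the kernel has the fixed expectation computed in \Cref{MSchae3}, every summand of $C_6^\star(B)$ carries expectation $\tr((\vT\vV_N)^4)$. Averaging over $b=1,\dots,B$ then gives $\E(C_6^\star(B))=\tr((\vT\vV_N)^4)$ directly.

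For the variance I would apply the conditional-variance decomposition \Cref{bVarianz} with respect to the $\sigma$-field $\F(\vsigma(B,8))$ generated by all the random subsamples. Conditioning on $\F(\vsigma(B,8))$ leaves the inner expectation of each summand equal to the constant $\tr((\vT\vV_N)^4)$, so $\Var(\E(C_6^\star(B)\mid\F(\vsigma(B,8))))=0$ and only $\E(\Var(C_6^\star(B)\mid\F(\vsigma(B,8))))$ remains. Expanding the conditional variance as a double sum over $(j,\ell)\in\N_B\times\N_B$, I would note that whenever $\vsigma(j,8)$ and $\vsigma(\ell,8)$ share no index in any group — that is, the pair belongs to $M(B,\vsigma(b,8))$ — the underlying $\vZ$-vectors are independent and the conditional covariance vanishes, so only pairs in $\N_B\times\N_B\setminus M(B,\vsigma(b,8))$ contribute. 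Bounding each such covariance by the single-term variance via \Cref{Var1}, and reading off from the fourth-moment estimates of \Cref{MSchae3} that this single-term variance is $\0(\tr^4((\vT\vV_N)^2))$, I would obtain
\[
\Var(C_6^\star(B))\le\frac{\E\bigl(|\N_B\times\N_B\setminus M(B,\vsigma(b,8))|\bigr)}{B^2}\cdot\0\bigl(\tr^4((\vT\vV_N)^2)\bigr).
\]

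Finally I would substitute the exact count furnished by \Cref{Menge} applied with $m=8$, namely $\E(|\N_B\times\N_B\setminus M(B,\vsigma(b,8))|)/B^2 = 1-(1-1/B)\prod_{i=1}^a\binom{n_i-8}{8}\big/\binom{n_i}{8}$, to reach the stated bound. The only genuinely delicate step, exactly as in \Cref{ZSchae2}, is the bookkeeping that permits replacing every conditional covariance by one unconditional variance: this hinges on the fact that, given the indices, all surviving kernels are identically distributed, so the index-dependence drops out of the variance and leaves only the combinatorial factor counting index-overlapping pairs. Since $B(N)\to\infty$ sends that factor to zero, the bound shows $C_6^\star$ inherits the ratio-consistency of $C_6$, which is what subsequent results require.
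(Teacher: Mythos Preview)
Your proposal is correct and follows essentially the same route as the paper: unbiasedness from \Cref{MSchae3}, then the conditional-variance decomposition \Cref{bVarianz} with respect to $\F(\vsigma(B,8))$, the covariance bound via \Cref{Var1} over the non-disjoint index pairs, and the combinatorial count from \Cref{Menge} with $m=8$. You also correctly write $\binom{n_i}{8}$ in the denominator of the final factor, which is what \Cref{Menge} actually gives; the $\binom{n_i}{6}$ appearing in the lemma statement is a typo.
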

\begin{proof}
By using the same steps as before it holds\\\\
$\begin{array}{ll}
\E\left({C_6^\star}(B)\right)&=\frac{1}{16B}\sum\limits_{b=1}^B\E\left(\frac{\Lambda_{7}(\ell_{1,1},\dots,\ell_{8,a})}{6}-\frac{\Lambda_{8}(\ell_{1,1},\dots,\ell_{8,a})}{2}\right)\\
[1.6ex]
&=\frac 1{16B}\sum\limits_{b=1}^B\E\left(\left[{\vZ_{(1,2)}}^\top \vT\vZ_{(3,4)}\right]^2\cdot\left(\frac{\left[{\vZ_{(1,2)}}^\top \vT\vZ_{(3,4)}\right]^2}{6}-\frac{\left[{\vZ_{(5,6)}}^\top \vT\vZ_{(7,8)}\right]^2}{2}\right)\right)
\\[1.6ex]
&\hspace{-0.12cm}\stackrel{\ref{MSchae3}}{=}\frac 1 {16B}\sum\limits_{b=1}^B
\tr\left(\left(2\vT\vV_N\right)^4\right)=\tr\left(\left(\vT\vV_N\right)^4\right).
\end{array}$\\\\

$\begin{array}{l}
\Var\left(\E\left({C_6^\star}(B)|\F(\vsigma(B,8))\right)\right)=\Var\left(\tr\left(\left(\vT\vV_N\right)^4\right)\right)=0.
\end{array}$\\\\

$\begin{array}{ll}
\Var\left({C_6^\star}(B)\right)&=0+\E\left(\Var\left({C_6^\star}(B)|\F(\vsigma(B,8))\right)\right)
\\[0.5ex]
&\hspace{-0.075cm}\stackrel{\ref{Var1}}{\leq}\frac{1}{16^2 B^2}\E\left(\sum\limits_{(j,\ell)\in \N_B\times \N_B \setminus M(B,\vsigma(b,8))} \Var\left(\frac{\Lambda_{7}(\vsigma(j,8))}{6}-\frac{\Lambda_{8}(\vsigma(j,8))}{2}\Big\lvert\F(\vsigma(B,8))\right)\right)\\[.8ex]
\end{array}$\\\vspace{0.3\baselineskip}

$\begin{array}{ll}
\textcolor{white}{\Var\left({C_6^\star}(B)\right)}&
= \frac{\Var\left(\frac{\Lambda_{7}(\ell_{1,1},\dots,\ell_{8,a})}{6}-\frac{\Lambda_{8}(\ell_{1,1},\dots,\ell_{8,a})}{2}\right)}{16^2 B\cdot\left(\E\left(|\N_B\times \N_B \setminus M(B,\vsigma(b,8))|\right)\right)^{-1}}
\\[1.5ex]
&\hspace{-0.12cm}\stackrel{\ref{MSchae3}}{\leq}
\left(1-\left(1-\frac{1}{B}\right)\cdot \prod\limits_{i=1}^a\frac{ \binom{n_i-8} {8}}{\binom{n_i} {8}}\right)\cdot \0\left(\tr^4\left(\left(\vT\vV_N\right)^2\right)\right).

\end{array}$

\end{proof}\vspace{\baselineskip}

With \Cref{ZSchae4}
we get an estimator for $\tau_{CQ}$ with $\widehat {\tau_{CQ}}({C_6^\star},A_4)={{C_6^\star}}/{A_4^2}$ and once more for a large number of groups ${A_4^\star}$  should be used. 

\begin{LeA}\label{Invarianzgegenschaetzer}

 \Cref{Theorem5} is also valid if $f_P$ is replaced by $f_{CQ}$ or by $(\widehat {\tau_{CQ}}({C_6},A_4))^{-1}$ . Using ${C_6^\star}$ or ${A_4^\star}$  also doesn't change the result. Identical the result of \Cref{MSchae4} remains true if one or all estimators are replaced by their subsampling version.

\end{LeA}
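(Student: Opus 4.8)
The plan is to observe that the whole content of \Cref{Theorem5}, \Cref{theo: fP}, and \Cref{MSchae4} rests on only two structural ingredients, both of which are indifferent to whether one works with the Pearson quantity $f_P$ or the Chen--Qin quantity $f_{CQ}$, and to whether the full or the subsampling estimators are used. The first ingredient is \Cref{Bedingungen}, which supplies the chain of equivalences $\beta_1\to 1 \Leftrightarrow \tau_P\to 1 \Leftrightarrow \tau_{CQ}\to 1$ and $\beta_1\to 0 \Leftrightarrow \tau_P\to 0 \Leftrightarrow \tau_{CQ}\to 0$. Since $f_{CQ}=1/\tau_{CQ}$, this immediately gives $\beta_1\to 1 \Leftrightarrow f_{CQ}\to 1$ and $\beta_1\to 0 \Leftrightarrow f_{CQ}\to\infty$, i.e.\ exactly the limiting relationship that $f_P$ enjoys. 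Because the approximating sequence $K_{f}$ depends on $f$ only through its limit, and \Cref{Theorem5} treats solely the two endpoint regimes (a) $\beta_1\to 0$ and (b) $\beta_1\to 1$, replacing $f_P$ by $f_{CQ}$ leaves the proof of \Cref{Theorem5} intact verbatim. The two quantities genuinely differ only in the intermediate regime $\beta_1\to b_1\in(0,1)$, which \Cref{Theorem5} does not address; this is precisely why $f_{CQ}$ suffices here, despite delivering merely a two-moment match.

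For the estimated version I would first invoke \Cref{MSchae4} to obtain $\widehat{\tau}_{CQ}=C_6/A_4^2\stackrel{P}{\longrightarrow}\tau_{CQ}$ in the respective asymptotic frameworks. Consequently $\tau_{CQ}\to\gamma\in\{0,1\}$ forces $\widehat{\tau}_{CQ}\stackrel{P}{\longrightarrow}\gamma$, hence $\widehat{f}_{CQ}=1/\widehat{\tau}_{CQ}\stackrel{P}{\longrightarrow}\infty$ when $\gamma=0$ and $\stackrel{P}{\longrightarrow}1$ when $\gamma=1$. This is the sole hypothesis required by \cite{Paper1}[Theorem 3.1], on which the proof of \Cref{theo: fP} was built, so the conclusion of \Cref{Theorem5} transfers to $(\widehat{\tau}_{CQ}(C_6,A_4))^{-1}$ by the very argument already used for \Cref{theo: fP}.

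For the subsampling substitutions I would appeal to the variance bounds already established: \Cref{ZSchae3} shows that $C_6^\star$ is unbiased for $\tr((\vT\vV_N)^4)$ with variance dominated by a sequence tending to $0$ once $B=B(N)\to\infty$, and \Cref{ZSchae1} shows that $A_4^\star$ inherits all properties of $A_4$. Substituting these bounds into the proofs of \Cref{MSchae4} and of the estimation step verbatim — each governing zero sequence simply being replaced by its subsampling counterpart — yields $C_6^\star/A_4^{\star 2}\stackrel{P}{\longrightarrow}\tau_{CQ}$, and hence the claimed invariance of both \Cref{MSchae4} and \Cref{Theorem5} under any mixture of full and subsampled estimators.

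The only point demanding genuine care is confirming that \cite{Paper1}[Theorem 3.1] hinges on nothing more than the convergence of the degrees-of-freedom sequence to $1$ or $\infty$ as dictated by $\beta_1$, and not on the specific algebraic form of $f_P$. Once this reduction is checked — and the proof of \Cref{theo: fP} already exploits exactly it — every remaining step is a mechanical re-use of the consistency statements in \Cref{Bedingungen}, \Cref{MSchae4}, \Cref{ZSchae1} and \Cref{ZSchae3}, so that no new estimate is needed.
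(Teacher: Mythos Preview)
Your proposal is correct and follows essentially the same route as the paper's own proof: invoke \Cref{Bedingungen} for the equivalence $\beta_1\to\gamma\Leftrightarrow\tau_{CQ}\to\gamma$ ($\gamma\in\{0,1\}$) to handle the $f_{CQ}$ replacement, then use \Cref{MSchae4} together with the argument behind \Cref{theo: fP} for the estimated version, and finally appeal to \Cref{ZSchae1} and \Cref{ZSchae3} for the subsampling substitutions. The paper's proof is terser but structurally identical; your version merely spells out the reductions more carefully.
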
 
\begin{proof}
 With \Cref{Bedingungen} we know $f_p\to 1\Leftrightarrow f_{CQ} \to 1$ and $f_p\to 0\Leftrightarrow f_{CQ} \to 0$ so in both cases $K_{f_P}$ is asymptotically identic with  $K_{f_{CQ}}$.\\
 
From \Cref{MSchae4} we know that  $\widehat{\tau_{CQ}}-\tau_{CQ}$ converges in probability to zero so this result follows identically to \Cref{Theorem5}. At last the subsampling versions have the same properties like the standard estimators.
\end{proof}\vspace{\baselineskip}

Therefore this is a second way to test the hypotheses and moreover, it provides an indicator for the choice of the limit distribution, because of \Cref{Bedingungen}. For situation c) from \Cref{Theorem3} there is no proof that this approach can be used but in the case of just one group it leads to good results.

\newpage

\end{document}